\pgfplotsset{compat=1.15}
\definecolor{ffqqqq}{rgb}{1.,0.,0.}
\definecolor{uuuuuu}{rgb}{0.26666666666666666,0.26666666666666666,0.26666666666666666}
\newcommand{\R}{\mathbb{R}}
\newcommand{\N}{\mathbb{N}}
\newcommand{\F}{\mathcal{F}}
\newcommand{\A}{\mathcal{A}}
\newcommand{\M}{\mathcal{M}}
\newcommand{\No}{\mathcal{N}}
\newcommand{\J}{\mathcal{J}}
\newcommand{\Ha}{\mathcal{H}}
\newcommand{\eps}{\varepsilon}
\newcommand{\loc}{\text{loc}}
\newcommand{\phii}{\varphi}
\newcommand{\bmat}{\begin{bmatrix}}
\newcommand{\emat}{\end{bmatrix}}
\newcommand{\wtil}{\widetilde}
\newcommand{\JOm}{\mathcal{J}_\Omega}
\newcommand{\st}{\text{ s.t. }}
\providecommand*{\vint}[1]{\mathchoice
          {\mathop{\vrule width 5pt height 3 pt depth -2.5pt
                  \kern -9pt \kern 1pt\intop}\nolimits_{\kern -5pt{#1}}}
          {\mathop{\vrule width 5pt height 3 pt depth -2.6pt
                  \kern -6pt \intop}\nolimits_{\kern -3pt{#1}}}
          {\mathop{\vrule width 5pt height 3 pt depth -2.6pt
                  \kern -6pt \intop}\nolimits_{\kern -3pt{#1}}}
          {\mathop{\vrule width 5pt height 3 pt depth -2.6pt
                  \kern -6pt \intop}\nolimits_{\kern -3pt{#1}}}}
\DeclareMathOperator{\Mod}{Mod}
\DeclareMathOperator{\dist}{dist}
\DeclareMathOperator{\diam}{diam}
\DeclareMathOperator{\rad}{rad}
\DeclareMathOperator{\supt}{supp}
\DeclareMathOperator{\codim}{codim}
\numberwithin{equation}{section}
\theoremstyle{plain}
\newtheorem{thm}[equation]{Theorem}
\newtheorem{prop}[equation]{Proposition}
\newtheorem{cor}[equation]{Corollary}
\newtheorem{lem}[equation]{Lemma}
\theoremstyle{definition}
\newtheorem{defn}[equation]{Definition}
\newtheorem{remark}[equation]{Remark}
\newtheorem{example}[equation]{Example}
\def\blfootnote{\xdef\@thefnmark{}\@footnotetext}
\begin{document}

\title[Regularity of sets of finite fractional perimeter]{Regularity of sets of finite fractional perimeter and nonlocal minimal surfaces in metric measure spaces}

\blfootnote{2020 {\it Mathematics Subject Classification.} 46E36,\,26A45,\,49Q20}
\blfootnote{{\it Keywords and phrases.}  fractional perimeter, Besov spaces, nonlocal minimal surfaces, metric measure space}

\author{Josh Kline}
\address{Department of Mathematical Sciences, P.O. Box 210025, University of Cincinnati,
Cincinnati, OH 45221–0025, U.S.A.}
\email{klinejp@ucmail.uc.edu}

\date{\today}

\maketitle

\begin{abstract}
In the setting of a doubling metric measure space, we study regularity of sets with finite $s$-perimeter, that is, sets whose characteristic functions have finite Besov energy, with regularity parameter $0<s<1$ and exponent $p=1$.  Following a result of Visintin in $\R^n$, we provide a sufficient condition for finiteness of the $s$-perimeter given in terms of the upper Minkowski codimension of the regularized boundary of the set.  We also show that if a set has finite $s$-perimeter, then its measure-theoretic boundary has codimension $s$ Hausdorff measure zero.  To the best of our knowledge, this result is new even in the Euclidean setting.  By studying certain fat Cantor sets, we provide examples illustrating that the converses of these results do not hold in general. 
In the doubling metric measure space setting, we then consider minimizers of a nonlocal perimeter functional, extending the definition introduced by Caffarelli, Roquejoffre, and Savin in $\R^n$, and prove existence, uniform density, and porosity results for minimizers.   
\end{abstract}

\section{Introduction}

In the setting of a doubling metric measure space $(X,d,\mu)$, we consider measurable sets $E\subset X$ for which the following Besov energy is finite for some fractional parameter $0<s<1$:
\[
\|\chi_E\|_{B^s_{1,1}(X)}=\int_X\int_X\frac{|\chi_E(x)-\chi_E(y)|}{d(x,y)^s\mu(B(x,d(x,y)))}d\mu(y)d\mu(x).
\]
Specifically, we study regularity properties of these sets of finite $s$-perimeter and minimizers of the $s$-perimeter functional with respect to a given bounded domain $\Omega$.  

In the celebrated work of Bourgain, Brezis, and Mironescu \cite{BBM}, it was shown that 
 for $p>1$, functions in $W^{1,p}(\R^n)$ are characterized by the limiting behavior of the Besov $B^s_{p,p}$-energy, under suitable rescaling, as $s\to 1^-$.  It was then shown by D\'avila \cite{D} that the same characterization holds for functions in $BV(\R^n)$ when $p=1$.  That is, given a measurable set $E\subset\R^n$, 
 \[ \lim_{s\to1^-}(1-s)\|\chi_E\|_{B^s_{1,1}(\R^n)}=C_nP(E,\R^n),
 \]
 where $P(E,\R^n)$ is the perimeter of $E$, given by the BV energy of $\chi_E$, and $C_n$ is a dimensional constant.  More recently, Di Marino and Squassina \cite{DS} proved analogous results in the setting of a complete, doubling metric measure space supporting a $(1,p)$-Poincar\'e inequality.  When $p=1$, they showed that 
 \[
 P(E,X)\lesssim\liminf_{s\to 1^-}(1-s)\|\chi_E\|_{B^s_{1,1}(X)}\le\limsup_{s\to 1^-}(1-s)\|\chi_E\|_{B^s_{1,1}(X)}\lesssim P(E,X).
 \]
Hence, in both Euclidean and metric settings, sets of finite perimeter are characterized by the limiting behavior of their $s$-perimeters.  For more on characterization of Sobolev and BV functions in the metric setting by nonlocal functionals, see \cite{G,LPZ1,LPZ2,L1}.

Federer's characterization \cite{F} states that $E\subset\R^n$ is a set of finite perimeter if and only if $\Ha^{n-1}(\partial^* E)<\infty$, where $\partial^*E$ is the measure-theoretic boundary of $E$, see Section~\ref{sec:Boundaries and Measures}.  In the setting of a complete, doubling metric measure space supporting a $(1,1)$-Poincar\'e inequality, the ``only if'' direction was proved by Ambrosio \cite{A}, and more recently the ``if'' direction was shown by Lahti \cite{L2}.  Here the codimension 1 Hausdorff measure $\Ha^{-1}$ was used as a replacement for $\Ha^{n-1}$, see Section~\ref{sec:Boundaries and Measures}.  In fact, it was shown in \cite{A,L3} that this characterization holds in this setting with $\partial^*E$ replaced with a smaller subset of the boundary, namely points where $E$ and its complement have lower density bounded below by a constant which depends only on the doubling and Poincar\'e constants, see Section~\ref{sec:Poincare inequalities}.  As sets of finite perimeter are characterized both by this boundary regularity and the limiting behavior of their $s$-perimeters, we first consider in this paper the relationship between the $s$-perimeter of a set and codimension $s$ measurements of its boundary. 

The von Koch snowflake domain $K\subset\R^2$ shows that the ``if'' direction of Federer's characterization does not hold in the fractional case.  Indeed, it was shown in \cite{Lom} that $\chi_K\in B^s_{1,1}(\R^2)$ if and only if $s\in (0,2-\log4/\log3)$.  Since $\partial K=\partial^*K$ and $\Ha^{\log4/\log3}(\partial K)<\infty$ (see \cite{Fa} for example), we see that $\Ha^{n-s}(\partial^*E)<\infty$ does not imply that $E$ is a set of finite $s$-perimeter in general.  However, considering the upper Minkowski content rather than the Hausdorff measure provides a sufficient condition for finiteness of the $s$-perimeter.  In \cite{V}, the following fractional codimension was introduced by Visintin for bounded sets $E\subset\R^n$,
\[
\codim_{F}(E)=\sup\{0<s<1:\chi_E\in B^s_{1,1}(\R^n)\},
\]
and it was shown that this codimension is bounded below by the upper Minkowski codimension of the regularized boundary $\partial^+E$ of $E$.  The regularized boundary contains the measure-theoretic boundary, and any set can be modified on a set of measure zero so that its topological boundary coincides with its regularized boundary, see Lemma~\ref{lem:Set Representative}.  See Section~\ref{sec:Boundaries and Measures} for the precise definitions of the regularized boundary and Minkowski contents and codimensions.  We first generalize this result from \cite{V} to the doubling metric measure space setting.  For the following and subsequent result, we denote our ambient metric measure space by $(Z,d,\nu)$, rather than $(X,d,\mu)$, in order to avoid confusion with the hyperbolic filling, used in the same section.

\begin{thm}\label{thm:MinkowskiSufficient}
   Let $(Z,d,\nu)$ be a metric measure space satsifying the LLC-1 condition, with $\nu$ a doubling measure. Let $E\subset Z$ be a bounded measurable set, and suppose that there exists $t>0$ such that $\overline{\mathcal{M}}^{-t}(\partial^+E)<\infty$.  Then $\chi_E\in B^s_{1,1}(Z)$ for all $0<s<t$.
\end{thm}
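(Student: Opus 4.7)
The plan is to follow Visintin's Euclidean strategy, using the LLC-1 condition as a metric substitute for the connectedness of Euclidean balls. The central observation is that whenever $|\chi_E(x)-\chi_E(y)|=1$, the pair $(x,y)$ must straddle $\partial^+E$, which (via LLC-1) forces $d(x,y)$ to be comparable to $\rho(x):=\dist(x,\partial^+E)$. This reduces the Besov energy to an integral of $\rho^{-s}$ over $E$, which the Minkowski codimension hypothesis and boundedness of $E$ then force to be finite.

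First I would invoke Lemma~\ref{lem:Set Representative} to replace $E$ by an equivalent representative with $\partial E=\partial^+E$; finiteness of $\overline{\M}^{-t}(\partial^+E)$ implies $\nu(\partial^+E)=0$ (letting $r\to 0^+$ in the defining neighborhoods), so the Besov energy is unaffected. The geometric key step is: $\chi_E(x)\neq\chi_E(y)$ forces $d(x,y)\ge \rho(x)/C_L$, where $C_L$ is the LLC-1 constant. Indeed, if $d(x,y)<\rho(x)/C_L$, LLC-1 yields a continuum $F\ni x,y$ lying in $B(x,\rho(x))$; this ball misses $\partial^+E=\partial E$, and since $Z\setminus\partial E=E^\circ\sqcup (Z\setminus E)^\circ$ is a disjoint union of two open sets, connectedness of $F$ forces $\chi_E(y)=\chi_E(x)$, a contradiction.

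Next I split the Besov integral as $\int_E\int_{Z\setminus E}+\int_{Z\setminus E}\int_E$ of the same integrand and bound each inner integral by a dyadic annular estimate: for $\rho(x)>0$, decomposing $\{y:d(x,y)\ge \rho(x)/C_L\}$ into annuli $A_k=B(x,2^{k+1}\rho(x)/C_L)\setminus B(x,2^k\rho(x)/C_L)$ and applying doubling, the geometric series $\sum_k 2^{-ks}$ (convergent since $s>0$) gives
\[
\int_{\{y:\,d(x,y)\ge \rho(x)/C_L\}}\frac{d\nu(y)}{d(x,y)^s\,\nu(B(x,d(x,y)))}\lesssim \rho(x)^{-s}.
\]
Using $\nu(B(x,d(x,y)))\asymp \nu(B(y,d(x,y)))$ (doubling plus the triangle inequality) to swap ball centers in the second piece so that the free outer variable lies in $E$, I obtain $\|\chi_E\|_{B^s_{1,1}(Z)}\lesssim \int_E \rho(x)^{-s}\,d\nu(x)$.

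Finally, the layer cake formula yields
\[
\int_E \rho(x)^{-s}\,d\nu(x)=s\int_0^\infty r^{-s-1}\,\nu(\{x\in E:\rho(x)<r\})\,dr.
\]
For $0<r\le r_0$ (with $r_0$ chosen so that the Minkowski codimension hypothesis furnishes $\nu(\{\dist(\cdot,\partial^+E)<r\})\lesssim r^t$), the sublevel set is controlled by $r^t$; for $r\ge r_0$ the sublevel set sits inside $E$ and so has measure at most $\nu(E)<\infty$ by boundedness. The integral splits as $\int_0^{r_0}r^{t-s-1}\,dr+\nu(E)\int_{r_0}^\infty r^{-s-1}\,dr<\infty$, where convergence uses exactly $0<s<t$. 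The most delicate point is the geometric key step: the LLC-1 argument rests on the chosen representative satisfying $\partial E=\partial^+E$, so that ``inside'' and ``outside'' are separated precisely by the regularized boundary; verifying that Lemma~\ref{lem:Set Representative} gives this up to a null set is what makes the reduction legitimate.
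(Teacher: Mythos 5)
Your proof is correct and is essentially the paper's own argument in repackaged form: your pointwise LLC-1 estimate $d(x,y)\ge\dist(x,\partial^+E)/C_L$ when $\chi_E(x)\neq\chi_E(y)$ is the same use of LLC-1 as the paper's inclusion of the dyadic layers $E_k$ into $C_L2^{-k}r_0$-neighborhoods of $\partial E$, and your layer-cake integral of $\dist(\cdot,\partial^+E)^{-s}$ over $E$ replaces the paper's discrete sum over the layers, while all remaining ingredients (the set-representative lemma, $\nu(\partial^+E)=0$, the annular kernel estimate via doubling, swapping ball centers by doubling, the covering-plus-doubling bound $\nu(\mathcal{N}_r(\partial^+E))\lesssim r^t$ from the Minkowski hypothesis, and the use of $0<s<t$ together with boundedness of $E$) coincide with the paper's. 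I see no gaps.
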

\noindent Here, $\overline\M^{-t}$ is the codimension $t$ upper Minkowski content, defined in Section~\ref{sec:Boundaries and Measures}, and the LLC-1 condition, defined in Section~\ref{sec:Doubling}, is a connectedness condition on $Z$.  In Example~\ref{ex:Mink LLC}, we show that the conclusion to Theorem~\ref{thm:MinkowskiSufficient} may fail if $Z$ does not satisfy this condition.  Furthermore, by constructing certain fat Cantor sets, we show in Example~\ref{ex:Mink Converse} that the converse to Theorem~\ref{thm:MinkowskiSufficient} does not hold in general.  In fact, we provide an example of a set $E\in B^s_{1,1}(Z)$ for which $\partial^*E\subsetneq\partial^+E$ and $\underline\M^{-t}(\partial^*E)=\infty$ for all $0<t<1$.  Thus, the converse does not hold even with the weaker conclusion of $\underline\M^{-t}(\partial^*E)<\infty$. Here $\underline\M^{-t}$ is the codimension $t$ lower Minkowski content, see Section~\ref{sec:Boundaries and Measures}.  For related results in the setting of strongly local Dirichlet metric spaces with sub-Gaussian heat kernel estimates, see \cite[Section~5.3]{ABCKRST}.

We then study necessary conditions for finiteness of the $s$-perimeter in the metric setting, and obtain the following result:

\begin{thm}\label{thm:CodimHausZero}
Let $(Z,d,\nu)$ be a compact metric measure space with $\nu$ a doubling measure, and let $0<s<1$.  If $E\subset Z$ is such that $\chi_E\in B^s_{1,1}(Z)$, then $\Ha^{-s}(\partial^*E)=0$. 
\end{thm}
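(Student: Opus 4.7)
The plan is to establish a pointwise lower bound for the Besov integral over product balls centered at points of $\partial^* E$, and then combine it with absolute continuity of the Besov energy and a Vitali-type covering argument.

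First, I would decompose $\partial^* E = \bigcup_{m \in \N} \partial^*_{1/m} E$, where
\[
\partial^*_\gamma E \coloneq \left\{x \in Z : \exists\, r_k \to 0^+ \text{ with } \min\{\nu(E \cap B(x, r_k)),\, \nu(B(x, r_k) \setminus E)\} \geq \gamma\, \nu(B(x, r_k))\right\}.
\]
By countable subadditivity of $\Ha^{-s}$, it suffices to show $\Ha^{-s}(\partial^*_\gamma E) = 0$ for each fixed $\gamma \in (0, 1/2]$.

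Next I would prove the local energy lower bound: for $x \in \partial^*_\gamma E$ and $r$ in the associated sequence, write $F = E \cap B(x,r)$ and $G = B(x,r) \setminus E$. For $(u,v) \in F \times G$ one has $|\chi_E(u)-\chi_E(v)| = 1$ and $d(u,v) \leq 2r$, and doubling re-centered at $x$ gives $\nu(B(u, d(u,v))) \leq \nu(B(u, 2r)) \leq C_d\, \nu(B(x, r))$. Combined with $\nu(F)\nu(G) \geq \gamma^2 \nu(B(x,r))^2$, this yields
\[
\iint_{B(x,r) \times B(x,r)} \frac{|\chi_E(u) - \chi_E(v)|}{d(u,v)^s\, \nu(B(u, d(u,v)))}\, d\nu(u)\, d\nu(v) \;\geq\; c(\gamma, s, C_d)\, \frac{\nu(B(x,r))}{r^s}.
\]

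For the third step, fix $\eps > 0$. Since the Besov integrand belongs to $L^1(Z \times Z)$ and vanishes on the diagonal, dominated convergence provides $\delta > 0$ such that its integral over $\{(u,v) : d(u,v) \leq 2\delta\}$ is less than $\eps$. The balls $B(x,r)$ with $x \in \partial^*_\gamma E$, $r < \delta$, and $r$ in the good sequence at $x$ form a fine cover of $\partial^*_\gamma E$, so the $5r$-covering theorem in doubling spaces produces a pairwise disjoint subfamily $\{B(x_i, r_i)\}$ whose $5$-dilates still cover $\partial^*_\gamma E$. Since the product squares $B(x_i, r_i) \times B(x_i, r_i)$ are disjoint subsets of $\{d(u,v) \leq 2\delta\}$, summing the local bound and using doubling to pass from $r_i$ to $5r_i$ gives
\[
\Ha^{-s}_{5\delta}(\partial^*_\gamma E) \;\leq\; \sum_i \frac{\nu(B(x_i, 5r_i))}{(5r_i)^s} \;\lesssim_{\gamma, s, C_d}\; \eps.
\]
Letting $\eps \to 0$ (and hence $\delta \to 0$) yields $\Ha^{-s}(\partial^*_\gamma E) = 0$, and the union bound completes the proof.

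The main obstacle I anticipate is the local energy lower bound: one must use doubling carefully to re-center $\nu(B(u, d(u,v)))$ at $x$ so that the constant $c(\gamma,s,C_d)$ is independent of $x$ and $r$. A secondary point is matching the paper's precise notion of $\partial^* E$ from Section~\ref{sec:Boundaries and Measures} to the above decomposition; in a doubling metric measure space this should reduce to standard density arguments.
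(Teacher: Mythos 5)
Your proposal is correct in substance, and it takes a genuinely different route from the paper. The paper proves Theorem~\ref{thm:CodimHausZero} by leaving $Z$ entirely: it extends $\chi_E$ to a Newton--Sobolev function on the uniformized hyperbolic filling via Theorem~\ref{thm:HypFillThm}, uses the $(1,1)$-Poincar\'e inequality there through the Loewner-type estimate of Lemma~\ref{lem:Loewner}, and transports the resulting codimension-one estimates back to $Z$ with Lemmas~\ref{lem:ContentCompare} and \ref{lem:CodimComp}; compactness of $Z$ is needed precisely to build the filling. Your argument stays on $Z$: the lower bound $c(\gamma,s,C_\nu)\,\nu(B(x,r))/r^s$ for the Besov double integral over $B(x,r)\times B(x,r)$ at a point where both densities exceed $\gamma$ is correct (with $B(u,2r)\subset B(x,3r)$, so the constant involves $C_\nu^2$ rather than $C_\nu$), the product squares of a disjoint subfamily from the $5r$-covering lemma are disjoint subsets of the strip $\{d(u,v)\le 2\delta\}$, and absolute continuity of the $L^1$ integrand (which vanishes on the diagonal) kills the right-hand side. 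One cosmetic fix: rather than ``letting $\eps\to0$ and hence $\delta\to0$,'' run the argument for every prescribed content scale $\rho$ by choosing the radii $r<\min\{\delta,\rho/5\}$, which your cover permits since each point admits arbitrarily small good radii; this gives $\Ha^{-s}_\rho(\partial^*_\gamma E)\lesssim\eps$ for all $\rho,\eps>0$, hence $\Ha^{-s}(\partial^*_\gamma E)=0$. What your route buys is elementarity and generality: no filling, no Poincar\'e inequality, no compactness of $Z$, and explicit constants; what the paper's route buys is a unified use of the trace/extension machinery that it develops anyway and reuses elsewhere.

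The one place where you defer an actual argument is the decomposition $\partial^*E=\bigcup_m\partial^*_{1/m}E$ with your single-sequence definition, and it does need a proof: in the paper's definition of $\partial^*E$ the two limsups may a priori be attained along different radius sequences, so it is not immediate that some small radius sees both densities bounded below simultaneously. Doubling supplies the missing intermediate-value argument. Write $f(r)=\nu(B(x,r)\cap E)/\nu(B(x,r))$; then $f(2r)\ge f(r)/C_\nu$ and $1-f(2r)\ge(1-f(r))/C_\nu$, so neither density drops by more than a factor $C_\nu$ when the radius doubles. Given arbitrarily small $\rho$ with $f(\rho)\ge a/2$ and still smaller $\rho'<\rho$ with $1-f(\rho')\ge b/2$, climb the dyadic scales $2^j\rho'$ and stop at the first $j_0$ with $f(2^{j_0}\rho')\ge a/(2C_\nu^2)$; such $j_0$ exists because the top scale lies between $\rho$ and $2\rho$, where $f\ge a/(2C_\nu)$. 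If $j_0=0$ then $1-f\ge b/2$ there; if $j_0\ge1$ then $f<a/(2C_\nu^2)\le1/2$ at the previous scale, whence $1-f\ge 1/(2C_\nu)$ at scale $j_0$. Thus both densities are at least $c(a,b,C_\nu)>0$ at radii tending to zero, so every point of $\partial^*E$ lies in some $\partial^*_\gamma E$. (The paper's own proof quietly relies on the same single-radius fact when it selects $r_{\zeta,k}$ for $\zeta\in\partial^*_kE$.) With this short lemma supplied, your proof is complete.
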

\noindent Here, $\Ha^{-s}$ is the codimension $s$ Hausdorff measure, see Section~\ref{sec:Boundaries and Measures}.  To the best of our knowledge, this result is new even in the Euclidean setting. To prove this result, we utilize recent developments from analysis in metric spaces pertaining to the hyperbolic filling of a compact, doubling metric measure space.  In \cite{BBS}, it was shown that any compact, doubling metric measure space $(Z,d,\nu)$ can be realized as the boundary of a uniform metric measure space $X$ which is doubling and supports a $(1,1)$-Poincar\'e inequality.  
 Furthermore, bounded trace and extension operators exist between the Besov spaces $B^s_{p,p}(Z)$ and the Newton-Sobolev spaces $N^{1,p}(X)$.  See Theorem~\ref{thm:HypFillThm} and Section~\ref{sec:Hyperbolic Filling} below for an outline of the construction of the hyperbolic filling.  To study the measure-theoretic properties of $E\subset Z$, we consider the Newton-Sobolev extension of $\chi_E$ to the hyperbolic filling of $Z$.  There, the validity of a $(1,1)$-Poincar\'e inequality provides us with useful potential theoretic tools with which to study $\partial^*E$.  As with Theorem~\ref{thm:MinkowskiSufficient}, we also construct certain fat Cantor sets in Example~\ref{ex:Haus Converse} which illustrate that the converse of Theorem~\ref{thm:CodimHausZero} does not hold in general.  This construction also shows that Theorem~\ref{thm:CodimHausZero} is sharp in the sense that we cannot replace $\Ha^{-s}(\partial^*E)=0$ with $\Ha^{-t}(\partial^+E)<\infty$.  Indeed,  we show in Example~\ref{ex:Haus Sharp} that there exists a fat Cantor set $C_a$ such that $\chi_{C_a}\in B^s_{1,1}([0,1])$, but $\Ha^{-t}(\partial^+C_a)=\infty$ for all $0<t<1$. 

 By combining Theorem~\ref{thm:MinkowskiSufficient} and Theorem~\ref{thm:CodimHausZero}, we obtain the following corollary, which generalizes the result from \cite{V} to the metric setting and provides a new upper bound in terms of the Hausdorff codimension.  See Section~\ref{sec:Boundaries and Measures} and \ref{sec:Fractional Perimeter} for the precise definitions.

 \begin{cor}\label{cor:Visintin Improvement}
    Let $(Z,d,\nu)$ be a compact metric measure space satisfying the LLC-1 condition, with $\nu$ a doubling measure.  Let $E\subset Z$ be measurable.  Then,
    \begin{equation}\label{eq:Codim Ineq}
    \overline\codim_\M(\partial^+ E)\le\codim_F(E)\le\codim_\Ha(\partial^*E).
    \end{equation}
\end{cor}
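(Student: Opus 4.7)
The plan is to derive both inequalities of~\eqref{eq:Codim Ineq} as essentially immediate consequences of Theorems~\ref{thm:MinkowskiSufficient} and~\ref{thm:CodimHausZero} together with the definitions of the three codimensions from Section~\ref{sec:Boundaries and Measures}. Since $Z$ is compact and $E$ is measurable, $E$ is automatically bounded, so the hypotheses of both theorems are in force; in particular, no reduction or truncation of $E$ is required, and the LLC-1 assumption in the corollary passes directly to the hypothesis of Theorem~\ref{thm:MinkowskiSufficient}.

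For the lower bound $\overline\codim_\M(\partial^+E)\le\codim_F(E)$, I would fix any $t>0$ for which $\overline{\mathcal{M}}^{-t}(\partial^+E)<\infty$ and apply Theorem~\ref{thm:MinkowskiSufficient}, obtaining $\chi_E\in B^s_{1,1}(Z)$ for every $0<s<t$. By the definition of $\codim_F(E)$ as the supremum of such $s$, this forces $\codim_F(E)\ge s$ for all $s<t$, hence $\codim_F(E)\ge t$; taking the supremum over all admissible $t$ yields the desired inequality. For the upper bound $\codim_F(E)\le\codim_\Ha(\partial^*E)$, I would argue in the dual direction: for every $s\in(0,1)$ with $\chi_E\in B^s_{1,1}(Z)$, Theorem~\ref{thm:CodimHausZero} gives $\mathcal{H}^{-s}(\partial^*E)=0$, so by the definition of the Hausdorff codimension each such $s$ satisfies $s\le\codim_\Ha(\partial^*E)$. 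Taking the supremum over these $s$ produces the second inequality.

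The main obstacle here is essentially bookkeeping: the substantive content of the corollary is entirely absorbed into the two preceding theorems. The only care needed is to match the precise conventions used in Section~\ref{sec:Boundaries and Measures} for $\overline\codim_\M$ and $\codim_\Ha$ (in particular, that $\codim_\Ha(A)=\sup\{s:\mathcal{H}^{-s}(A)=0\}$ and $\overline\codim_\M(A)=\sup\{t:\overline{\mathcal{M}}^{-t}(A)<\infty\}$, up to the usual conventions on empty suprema), and to dispatch the trivial edge cases in which either no $t>0$ makes $\overline{\mathcal{M}}^{-t}(\partial^+E)$ finite or no $s\in(0,1)$ has $\chi_E\in B^s_{1,1}(Z)$; in each such case the relevant supremum is~$0$ and the inequality is automatic.
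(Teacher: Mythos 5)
Your proposal is correct and takes essentially the same route as the paper, which states that the corollary ``follows immediately'' from Theorem~\ref{thm:MinkowskiSufficient} and Theorem~\ref{thm:CodimHausZero}; your unwinding of the suprema defining $\overline\codim_\M$, $\codim_F$, and $\codim_\Ha$ is precisely the intended bookkeeping. One minor point: the paper defines $\codim_\Ha(A)=\sup\{t>0:\Ha^{-t}(A)<\infty\}$ rather than with the condition $\Ha^{-t}(A)=0$, but your deduction is unaffected since $\Ha^{-s}(\partial^*E)=0$ certainly gives $\Ha^{-s}(\partial^*E)<\infty$.
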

\noindent In \cite{Lom}, it was shown that \eqref{eq:Codim Ineq} holds with equality when $E\subset\R^2$ is the von Koch snowflake domain.  In Example~\ref{ex:Codim Strict}, we show that these inequalities may be strict, again by constructing a certain fat Cantor set. 

In the second half of this paper, we consider a complete, doubling metric measure space $(X,d,\mu)$, and fix a bounded domain $\Omega\subset X$ and $0<s<1$.  We then study minimizers of the $\J_\Omega^s$ functional, defined here in Section~\ref{sec:Fractional Perimeter}, as originally introduced in $\R^n$ by Caffarelli, Roquejoffre, and Savin \cite{CRS}.  Related to problems arising in phase transitions, motion by mean curvature, and front propogation, see for example \cite{ABS,I,Ish,IPS,Sl}, this functional measures the interaction between a set and its complement given by the $B^s_{1,1}$-seminorm, but excludes the interaction occurring in the complement of $\Omega$.  In \cite{CRS}, existence and regularity of minimizers, referred to there as \emph{nonlocal minimal surfaces}, were studied for Lipschitz domains $\Omega\subset\R^n$.  In particular, it was shown that if $E\subset\R^n$ is a minimizer, then $\partial E\cap\Omega$ is a $C^{1,\alpha}$-hypersurface around each of its points, outside of an $n-2$ dimensional set.  Since their introduction in \cite{CRS}, nonlocal minimal surfaces have been studied at great length and remain a subject of active research in the Euclidean setting, see for example \cite{CV,DSV,DSV2,FV,MRT} and references therein.  Of particular interest are asymptotic results as the nonlocal parameter $s$ converges to either $0$ or $1$ and the ``stickiness'' phenomenon which is exhibited by nonlocal minimal surfaces, see for example \cite{DV,BLV,B,DSV3,DOE}.  For $\Gamma$-convergence results regarding the $\J_\Omega^s$ functional, see \cite{ADM}.

We consider this problem in the doubling metric measure space setting, and first prove an existence result via the direct method of calculus of variations in Theorem~\ref{thm:Existence of s-minimizers}.  To do so, we first prove a compactness result in Theorem~\ref{thm:BesovCompactness} by adapting the proof of \cite[Theorem~7.1]{Hitch} to the metric setting.  We then obtain the following uniform density result, which was established in \cite[Theorem~4.1]{CRS} in the Euclidean setting:

\begin{thm}\label{thm:Uniform Density}
    Let $(X,d,\mu)$ be a connected doubling metric measure space, let $\Omega\subset X$ a bounded domain and let $0<s<1$. Then there exists a constant $\gamma_0>0$, such that if $E\subset X$ is a minimizer of $\J_\Omega^s$, then after possibly modifying $E$ on a set of measure zero, it follows that for all $x_0\in\partial E\cap\Omega$ and $R_0>0$ such that $B(x_0,2R_0)\subset\Omega$, we have 
    \begin{equation}\label{eq:Uniform Density}
        \frac{\mu(B(x_0,R_0)\cap E)}{\mu(B(x_0,R_0))}\ge\gamma_0\quad\text{and}\quad\frac{\mu(B(x_0,R_0)\setminus E)}{\mu(B(x_0,R_0))}\ge\gamma_0.
    \end{equation}
    Here, the constant $\gamma_0$ depends only on $s$ and the doubling constant of $\mu$, see \eqref{eq:Density}.
\end{thm}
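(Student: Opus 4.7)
The plan is to adapt the De~Giorgi-style iteration of Caffarelli, Roquejoffre, and Savin \cite{CRS} to the doubling metric setting. First apply Lemma~\ref{lem:Set Representative} to pass to the representative of $E$ for which $\partial E = \partial^+ E$; then for every $x_0 \in \partial E$, the density $V(r)/\mu(B(x_0,r))$, where $V(r) := \mu(E \cap B(x_0, r))$, does not converge to $0$ (nor to $1$) as $r \to 0^+$. Since $X \setminus E$ is also a minimizer of $\J_\Omega^s$, it suffices to prove the first inequality in \eqref{eq:Uniform Density}; we argue by contradiction, assuming $V(R_0) < \gamma_0\,\mu(B(x_0, R_0))$ for some small $\gamma_0 > 0$ depending only on $s$ and the doubling constant.

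\emph{Minimality comparison.} For each $r \in (0, R_0)$, the set $F_r := E \setminus B(x_0, r)$ coincides with $E$ outside $\Omega$ and so is admissible. Writing $A_r := E \cap B(x_0, r)$ and $L(U,V) := \int_U \int_V d(x,y)^{-s}\mu(B(x, d(x,y)))^{-1}\, d\mu(y)\, d\mu(x)$, the inequality $\J_\Omega^s(E) \le \J_\Omega^s(F_r)$—with the $\Omega^c \times \Omega^c$ contributions cancelling—expands, up to the bounded kernel asymmetry controlled by doubling, to
\[
L\bigl(A_r, B(x_0, r) \setminus E\bigr) + L\bigl(A_r, E^c \setminus B(x_0, r)\bigr) \lesssim L\bigl(A_r, E \setminus B(x_0, r)\bigr).
\]
For $x \in A_r$ and $y \in B(x_0, r) \setminus E$ doubling gives $K(x,y) \gtrsim r^{-s} \mu(B(x_0,r))^{-1}$, yielding the isoperimetric-type lower bound
\[
L\bigl(A_r, B(x_0, r) \setminus E\bigr) \gtrsim \frac{V(r)\bigl(\mu(B(x_0, r)) - V(r)\bigr)}{r^s\,\mu(B(x_0, r))}.
\]

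\emph{Outer bound and iteration.} The right-hand side is delicate, since $K(x, y)$ can blow up when $x \in A_r$ and $y \in E \setminus B(x_0, r)$ are both near $\partial B(x_0, r)$. Integrating the inequality over $r \in (\rho, 2\rho)$, applying Fubini, and decomposing the outer region into the dyadic annuli $B(x_0, 2^{k+1}\rho) \setminus B(x_0, 2^k\rho)$ produces, via doubling, an averaged estimate of the form
\[
\int_\rho^{2\rho} L\bigl(A_r, E \setminus B(x_0, r)\bigr)\, dr \lesssim \rho^{1-s}\bigl(V(2\rho) - V(\rho)\bigr) + \rho^{1-s}\,V(\rho).
\]
Combining with the averaged lower bound and invoking the monotonicity of $V$, one obtains, in terms of $\gamma(\rho) := V(\rho)/\mu(B(x_0, \rho))$, a nonlinear recursion implying $\gamma(\rho) \le \theta\,\gamma(2\rho)$ for some $\theta < 1$ whenever $\gamma(2\rho) \le \gamma_0$. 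Iterating backward from $R_0$ along the dyadic scales $r_k = 2^{-k}R_0$ gives $\gamma(r_k) \le \theta^k \gamma_0 \to 0$, and doubling propagates this decay to $\lim_{r \to 0^+} \gamma(r) = 0$, contradicting $x_0 \in \partial^+E$.

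The principal obstacle will be the outer kernel upper bound: in the absence of rotational symmetry or a Poincar\'e inequality, the singularity of $K(x,y)$ near $\partial B(x_0, r)$ cannot be tamed pointwise and must be handled by averaging in $r$, with the resulting dyadic sums controlled solely via doubling. A secondary subtlety is verifying that the isoperimetric gain $\mu(B(x_0, r)) - V(r)$ survives this averaging, since it is precisely this factor that drives the geometric contraction of $\gamma(r_k)$ under the iteration; choosing $\gamma_0$ sufficiently small ensures $\mu(B(x_0, r)) - V(r) \ge \tfrac{1}{2}\mu(B(x_0, r))$ along the iteration, keeping the recursion strict.
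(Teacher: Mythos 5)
Your overall scaffolding (pass to the representative from Lemma~\ref{lem:Set Representative}, use complement symmetry, compare $E$ with $E\setminus B(x_0,r)$ as in Lemma~\ref{lem:SupSubSoln}, average in $r$ to tame the kernel singularity at $\partial B(x_0,r)$) matches the paper, and your averaged outer bound $\int_\rho^{2\rho} L_s(A_r,E\setminus B(x_0,r))\,dr\lesssim \rho^{1-s}V(2\rho)$ is essentially the computation the paper performs. But there is a genuine gap at the heart of the iteration: your lower bound $L_s(A_r,B(x_0,r)\setminus E)\gtrsim r^{-s}V(r)\bigl(\mu(B(x_0,r))-V(r)\bigr)/\mu(B(x_0,r))$ is \emph{linear} in $V(r)$ once the complement density is bounded below, so combining it with the (also linear) outer bound only yields $V(\rho)\le C\,V(2\rho)$ with a constant $C\ge 1$ coming from doubling and the kernel estimates. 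Since $V$ is monotone this is vacuous, and there is no mechanism in your argument to force the contraction factor $\theta<1$ that your recursion $\gamma(\rho)\le\theta\gamma(2\rho)$ requires; you call the recursion ``nonlinear'' but never introduce any nonlinearity. The missing ingredient is precisely the fractional Sobolev/Maz'ya-type inequality: the paper applies Lemma~\ref{lem:FracMazya} (built on the fractional Poincar\'e inequality of Theorem~\ref{thm:FractionalPoincare}, available because connected $+$ doubling implies reverse doubling) to $u=\chi_{B(z,r)\cap E}$, and it is the sublinear exponent $(Q-s)/Q$ on the left-hand side that produces the superlinear recursion
\[
\left(\frac{\mu(B(z,R/2)\cap E)}{\mu(B(z,R/2))}\right)^{(Q-s)/Q}\le C_0\,\frac{\mu(B(z,R)\cap E)}{\mu(B(z,R))},
\]
whose iteration drives the density to zero once $\gamma(R_0)<\gamma_0=(2^QC_0^{Q/s}C_Q)^{-1}$, independently of the size of $C_0$. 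Note also that the connectedness hypothesis enters exactly here, to verify the measure-fraction hypothesis \eqref{eq:MazyaGamma} of Lemma~\ref{lem:FracMazya}; your proposal never uses connectedness, which is a symptom of the missing step.

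A secondary flaw is the endgame: even granting geometric decay of $\gamma(r_k)$ at $x_0$, concluding $\lim_{r\to0^+}\gamma(r)=0$ only says $E$ has density zero at $x_0$, which does \emph{not} contradict $x_0\in\partial^+E$ --- the representative from Lemma~\ref{lem:Set Representative} guarantees positive \emph{measure} of $E$ in every ball centered on $\partial E$, not positive density (that would require $x_0\in\partial^*E$ or $\Sigma_\gamma E$). The paper avoids this by running the iteration at \emph{every} center $z\in B(x_0,R_0/4)$ (the iteration inequality is proved for all such $z$ with $B(z,4R)\subset\Omega$), concluding via the Lebesgue differentiation theorem that $\mu(B(x_0,R_0/4)\cap E)=0$, which does contradict the choice of representative. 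You would need to incorporate both the Sobolev-type gain and this uniform-in-center argument to make the proof work.
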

\noindent The connectedness assumption on $X$ can be relaxed, see Remark~\ref{rem:Connectedness}.  Our proof is inspired by the arguments of \cite[Theorem~4.2]{KKLS}, where an analogous uniform density result was obtained for minimizers of the (local) perimeter functional, in the setting of a doubling metric measure space supporting a $(1,1)$-Poincar\'e inequality.  There, the result is obtained via the De Giorgi method, where the Poincar\'e inequality plays a crucial role.  Since we are dealing with the fractional case, we do not require the validity of a Poincar\'e inequality, instead utilizing a fractional Poincar\'e inequality \cite[Theorem~3.4]{DLV}, which is valid without additional assumptions on $X$. 

As a consequence of Theorem~\ref{thm:Uniform Density}, we obtain the following porosity results for minimizers.  In the Euclidean setting, this was established in \cite[Corollary~4.3]{CRS}.

\begin{thm}\label{thm:Porosity}  Let $(X,d,\mu)$ be a doubling metric measure space satisfying the LLC-1 condition, let $\Omega\subset X$ be a bounded domain, and let $0<s<1$.  Then there exists a constant $C\ge 1$ such that if $E$ is a minimizer of $\mathcal{J}^s_\Omega$, then after possibly modifying $E$ on a set of measure zero, it follows that for all $x_0\in\partial E\cap\Omega$ and $R_0>0$ such that $B(x_0,2R_0)\subset\Omega,$ there exist $y,z\in B(x_0,R_0)$ such that 
\begin{equation}\label{eq:Porosity}
    B(y,R_0/C)\subset E\cap\Omega\text{ and }B(z,R_0/C)\subset\Omega\setminus E.
\end{equation}
The constant $C$ depends only on $s$, the doubling constant of $\mu$, and the constant $C_L$ from the LLC-1 condition.
\end{thm}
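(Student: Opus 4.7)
My plan is to deduce the topological porosity asserted by~\eqref{eq:Porosity} from the measure-theoretic uniform density of Theorem~\ref{thm:Uniform Density}, using the LLC-1 condition to bridge the gap between density-$1$ points of $E$ and actual ball containment. First, observe that the functional is symmetric under complementation, $\mathcal{J}^s_\Omega(E) = \mathcal{J}^s_\Omega(X \setminus E)$, which is immediate from the symmetry of the Besov kernel $|\chi_E(x) - \chi_E(y)|$ and the bookkeeping over $\Omega$ and $X\setminus\Omega$. Hence $X \setminus E$ is again a minimizer, and it suffices to produce $y \in B(x_0, R_0)$ with $B(y, R_0/C) \subset E \cap \Omega$; the ball inside $\Omega \setminus E$ then follows by applying the construction to the minimizer $X \setminus E$.

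After the measure-zero modification of Theorem~\ref{thm:Uniform Density}, the uniform density $\mu(E \cap B(x_0, R_0/4)) \geq \gamma_0\, \mu(B(x_0, R_0/4))$ combined with the Lebesgue differentiation theorem, valid in the doubling setting, furnishes a density-$1$ point $y^* \in E \cap B(x_0, R_0/4)$. Define
\[
r^* := \sup\{ r > 0 : B(y^*, r) \subset E\},
\]
which is strictly positive by the density-$1$ property at $y^*$. The goal is to show $r^* \geq R_0/C$ for a constant $C = C(s, C_d, C_L)$; this completes the proof with $y := y^*$.

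Suppose for contradiction that $r^* < R_0/C$. By maximality there is a point $z \in \overline{B(y^*, r^*)} \cap \partial E$. For $C$ large, $B(z, 2r^*) \subset B(x_0, 2R_0) \subset \Omega$, so Theorem~\ref{thm:Uniform Density} applied at $z$ with radius $r^*$ gives $\mu(B(z, r^*) \setminus E) \geq \gamma_0\, \mu(B(z, r^*))$. Since $B(y^*, r^*) \subset E$, the set $B(z, r^*) \setminus E$ must lie in the annulus $B(y^*, 2r^*) \setminus B(y^*, r^*)$, and doubling (via $B(z, r^*) \subset B(y^*, 2r^*)$ and $B(y^*, r^*) \subset B(z, 2r^*)$) forces this annulus to carry a fraction of $\mu(B(y^*, r^*))$ bounded below by a constant depending only on $\gamma_0$ and $C_d$.

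The LLC-1 condition is invoked in the final step to convert this annular measure bound into a contradiction with the maximality of $r^*$. Picking a density-$0$ point $w \in B(z, r^*) \setminus E$, LLC-1 produces a continuum $\Gamma \subset B(y^*, 2 C_L r^*)$ joining $w$ to $y^*$. Using $\Gamma$ together with the uniform density at every point of $\partial E \cap \Gamma$ (which is nonempty because $\Gamma$ connects a density-$0$ point of $E$ to a density-$1$ point), one iterates the argument to produce, at a controlled cost in scale, a new density-$1$ point $\tilde y^*$ whose associated supremum $r(\tilde y^*)$ strictly exceeds $r^*$ by a multiplicative factor $1 + \eta$, with $\eta = \eta(\gamma_0, C_d, C_L) > 0$. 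Iterating finitely many times, the number of steps depending only on $s$, $C_d$, and $C_L$, eventually contradicts the maximality of $r^*$ once $C$ is chosen large enough. The main obstacle of the proof lies in extracting the quantitative gain $\eta$ at each iteration from LLC-1 in a scale-invariant fashion; this is carried out via a covering argument combining doubling with the continuum structure provided by LLC-1, yielding the claimed dependence $C = C(s, C_d, C_L)$.
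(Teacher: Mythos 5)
There is a genuine gap: your argument never uses the minimality of $E$ beyond quoting Theorem~\ref{thm:Uniform Density}, and two-sided uniform density at boundary points (plus doubling and LLC-1) does \emph{not} imply porosity. A laminated ``comb'' set in $\R^2$ (parallel strips of width $w$ separated by gaps of width $w$, with $w\ll R_0$) satisfies $\mu(B\cap E)\gtrsim\mu(B)$ and $\mu(B\setminus E)\gtrsim\mu(B)$ at every boundary point and every scale, yet contains no ball of radius comparable to $R_0$; $\R^2$ is doubling and LLC-1, so no amount of covering/continuum bookkeeping can extract \eqref{eq:Porosity} from the ingredients you list. Concretely, the step where you claim that LLC-1 produces a new point $\tilde y^*$ whose inscribed radius exceeds $r^*$ by a factor $1+\eta$ is unsubstantiated and fails in the comb example, where the inscribed radius is capped at $w/2$ everywhere, so the iteration never gets off the ground. (A smaller but real flaw: a density-$1$ point $y^*$ need not be an interior point of $E$, so $r^*:=\sup\{r:B(y^*,r)\subset E\}$ can be $0$; density $1$ gives no open ball inside $E$, as fat Cantor sets show.)

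What is missing is precisely the second, quantitative use of minimality that drives the paper's proof. There one sets $\tilde\rho_r$ to be the smallest scale at which every point of $B(x_0,r)\cap E$ sees the complement, covers $B(x_0,r)\cap E$ by balls of radius comparable to $\tilde\rho_r$ (disjointified by the $5$-covering lemma), uses LLC-1 to find a point of $\partial E$ in each such ball and then Theorem~\ref{thm:Uniform Density} to give each ball a definite proportion of both $E$ and $X\setminus E$; this forces the lower bound $\sum_i L_s(B_i\cap E,B_i\setminus E)\gtrsim\gamma_0^3\,\mu(B(x_0,r))/\rho_r^{s}$. On the other hand, Lemma~\ref{lem:SupSubSoln} with $A=B(x_0,2r)\cap E$, together with \eqref{eq:Ls(A,B)}, bounds this interaction above by $L_s(B(x_0,2r),X\setminus B(x_0,2r))\lesssim\int_{B(x_0,2r)}(2r-d(x,x_0))^{-s}\,d\mu(x)$, and integrating in $r\in[R_0/2,R_0]$ tames the boundary singularity and yields $\tilde\rho_{R_0}\gtrsim R_0$, i.e.\ porosity with $C=C(s,C_\mu,C_L)$. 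This energy comparison is what rules out fine-scale interlacing of $E$ and its complement for a minimizer; without some version of it your scheme cannot be completed.
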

\noindent We point out that Theorem~\ref{thm:Uniform Density} follows also from Theorem~\ref{thm:Porosity}. 
 However, to prove Theorem~\ref{thm:Porosity}, we use the uniform density result of Theorem~\ref{thm:Uniform Density}, and so we prove it independently.  As in Theorem~\ref{thm:Uniform Density}, our proof is inspired by the arguments used to obtain the corresponding porosity results for minimizers of the local perimeter functional in \cite[Theorem~5.2]{KKLS}. As before, however, the argument there utilizes the $(1,1)$-Poincar\'e inequality, which we do not assume in our case.  As a corollary to Theorem~\ref{thm:Porosity}, we see that if $E$ is a minimizer of $\J_\Omega^s$ with $\chi_E\in B^s_{1,1}(X)$, then $\partial E\cap\Omega\subset\partial^*E$, and so by Theorem~\ref{thm:CodimHausZero}, we have that $\Ha^{-s}(\partial E\cap\Omega)=0$, see Corollary~\ref{cor:Minimizer Haus}. 

The structure of the paper is as follows: in Section~\ref{sec:Prelim} we introduce the necessary definitions and background notions, and in Section~\ref{sec:Size of Boundary}, we prove Theorems~\ref{thm:MinkowskiSufficient} and \ref{thm:CodimHausZero}.  In Section~4, we establish examples illustrating sharpness of the results of Section~\ref{sec:Size of Boundary} and that the converses of these results do not hold. In Section~\ref{sec:Existence of Minimizers}, we prove existence results for minimizers of $\J_\Omega^s$, and in Section~\ref{sec:Regularity of Minimizers}, we prove Theorems~\ref{thm:Uniform Density} and \ref{thm:Porosity}.

\section*{Acknowledgements}
 This research was partially supported by the NSF Grant DMS \#2054960, as well as the University of Cincinnati's University Research Center summer grant and the Taft Research Center Dissertation Fellowship Award.  The author would like to thank Nageswari Shanmugalingam for many valuable discussions regarding this topic and for her kind encouragement and support.  We would also like to thank the anonymous referee whose thoughtful comments and corrections greatly improved this paper. 
 \vskip.2cm
 {\bf Declarations:} The author has no competing interests to declare that are relevant to the content of this article.  No data was used in this study.

\section{Preliminaries}\label{sec:Prelim}
\subsection{Doubling and connectedness properties}\label{sec:Doubling}
Given a metric measure space $(X,d,\mu)$, we say that the measure $\mu$ is \emph{doubling} if there exists a constant $C_\mu\ge 1$ such that 
\[
0<\mu(B(x,2r))\le C_\mu\mu(B(x,r))<\infty
\]
for all $x\in X$ and $r>0$.  We say that $(X,d,\mu)$ is a \emph{doubling metric measure space} if $\mu$ is a doubling measure.  By iterating this condition, there are constants $C_Q\ge 1$ and $Q\ge 1$, depending only on $C_\mu$, such that 
\begin{equation}\label{eq:Lower Mass Bound}
\frac{\mu(B(y,r))}{\mu(B(x,R))}\ge C_Q^{-1}\left(\frac{r}{R}\right)^Q
\end{equation}
for all $x\in X$, $0<r\le R$, and $y\in B(x,R)$.  We say that $\mu$ is \emph{reverse doubling} if the opposite relationship holds, that is, if there exist constants $C_\sigma\ge 1$ and $\sigma>0$ such that for all $x\in X$, $0<r\le R$, and $y\in B(x,R)$, we have
\begin{equation}\label{eq:Upper Mass bound}
    \frac{\mu(B(y,r))}{\mu(B(x,R))}\le C_\sigma\left(\frac{r}{R}\right)^\sigma.
\end{equation}
The reverse doubling condition holds if $\mu$ is doubling and $X$ is connected, see for example \cite[Corollary~3.8]{BB}, with constants $\sigma$ and $C_\sigma$ depending only on $C_\mu$.  Throughout this paper, we let $C$ denote a constant which depends, unless otherewise stated, only on the structural constants, such as the doubling constant for example.  Its specific value is not of interest, and may vary with each occurrence, even within the same line.  Given quantities $A$ and $B$, we use the notation $A\simeq B$ to mean that there exists such a constant $C\ge 1$ such that $C^{-1}A\le B\le CA$.  Likewise, we use $\lesssim$ and $\gtrsim$ if the left and right inequalities hold, respectively.

As a consequence of the doubling property, we have the following covering lemma, see for example \cite[Appendix~B7]{Gromov} and \cite[Section~3.3]{HKST}.  This lemma holds under the weaker assumption that $(X,d)$ is \emph{metric doubling}.  A metric space is metric doubling if there exists a constant $C\ge 1$ such that any ball in the space can be covered by at most $C$ balls of half the radius, see \cite{H,HKST} for example.  As doubling metric measure spaces are necessarily metric doubling, we state this lemma here for such spaces.

\begin{lem}\label{lem:BoundedOverlapCover}
Let $(X,d,\mu)$ be a doubling metric measure space.  For any $\eps>0$, there exists a countable cover $\{B_i\}_i$ of $X$ by balls with $\rad(B_i)=\eps$ such that the collection $\{\frac{1}{5}B_i\}_i$ is pairwise disjoint.  Moreover, $\{B_i\}_i$ has bounded overlap in the sense that for all $K>0$, there exists a constant $C$ depending only on $K$ and $C_\mu$ such that 
\[
\sum_i\chi_{KB_i}\le C.
\]    
\end{lem}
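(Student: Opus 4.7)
The plan is to prove this by the standard Vitali-style maximal packing argument, together with a packing estimate coming from the doubling condition. Since a doubling metric measure space is separable, I can first fix a countable dense set and then use Zorn's lemma (or a straightforward transfinite-free induction on a dense sequence) to extract a maximal collection of points $\{x_i\}_i$ with the property that $d(x_i, x_j) \ge 2\eps/5$ whenever $i\ne j$. Equivalently, the balls $\{B(x_i, \eps/5)\}_i = \{\tfrac{1}{5}B_i\}_i$ are pairwise disjoint, and the family is at most countable by doubling (each ball of finite radius contains only finitely many centers, as I will verify below).

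Next I would verify that $\{B_i\}_i = \{B(x_i,\eps)\}_i$ covers $X$. If some $x\in X$ were not contained in any $B_i$, then in particular $d(x,x_i)\ge \eps$ for every $i$, which is stronger than the packing distance $2\eps/5$, contradicting the maximality of the chosen collection (since we could add $x$ to it). Therefore every $x\in X$ satisfies $d(x,x_i)<\eps$ for some $i$, proving the cover property. The pairwise disjointness of $\{\tfrac{1}{5}B_i\}_i$ is built into the construction.

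For the bounded overlap statement, fix $K>0$ and $x\in X$, and consider the index set $I_x=\{i:x\in KB_i\}$, i.e., $d(x,x_i)<K\eps$. For each such $i$, the disjoint ball $B(x_i,\eps/5)$ lies inside $B(x,(K+1/5)\eps)$, so disjointness yields
\[
\sum_{i\in I_x}\mu(B(x_i,\eps/5))\le \mu(B(x,(K+1/5)\eps)).
\]
On the other hand, for each $i\in I_x$ we have $B(x,(K+1/5)\eps)\subset B(x_i,(2K+2/5)\eps)$, so iterated doubling \eqref{eq:Lower Mass Bound} gives $\mu(B(x,(K+1/5)\eps))\le C\,\mu(B(x_i,\eps/5))$ for a constant $C$ depending only on $K$ and $C_\mu$. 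Combining the two estimates yields $|I_x|\le C$, which is exactly the bounded overlap bound $\sum_i\chi_{KB_i}(x)\le C$.

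The only mildly delicate point is the well-definedness of the maximal packing in a nonseparable space; however, the doubling hypothesis forces separability (any $\eps/5$-separated set is countable by the same packing argument applied locally), so the construction goes through without appeal to the axiom of choice beyond countable dependent choice. All other steps are routine once the packing inequality above is in hand.
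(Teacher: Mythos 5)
Your proof is correct: the maximal $2\eps/5$-separated set gives disjoint $\tfrac15 B_i$, maximality gives the cover, and the packing/doubling comparison yields the overlap bound with a constant depending only on $K$ and $C_\mu$. The paper does not prove this lemma but cites standard references (Gromov, Heinonen--Koskela--Shanmugalingam--Tyson), where the argument is essentially the one you give, so your approach matches the intended proof.
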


Here and throughout the paper, we denote by $\rad(B)$ the radius of a ball $B$.  We associate to this cover the following \emph{Lipschitz partition of unity}, see \cite[Appendix~B7]{Gromov}:
\begin{lem}\label{lem:Lipschitz POU}
    Let $(X,d,\mu)$ be a doubling metric measure space, and let $\eps>0$ and $\{B_i\}_i$ be as in Lemma~\ref{lem:BoundedOverlapCover}.  Then there exists a constant $C\ge 1$ such that for each $i$, there exists a $C/\eps$-Lipschitz function $\phii_i^\eps$ satisfying $0\le\phii_i^\eps\le 1$, $\supt(\phii_i^\eps)\subset 2B_i$, and $\sum_i\phii_i^\eps\equiv 1$.  Here the constant $C$ depends only on $C_\mu$. 
\end{lem}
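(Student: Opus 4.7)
The plan is to take the standard construction of a partition of unity subordinate to $\{2B_i\}_i$ by normalizing elementary bump functions. Write $B_i=B(x_i,\eps)$ and define
\[
\psi_i(x)=\min\bigl\{1,\,\eps^{-1}\max\{0,\,2\eps-d(x,x_i)\}\bigr\}.
\]
Directly from the definition, $0\le\psi_i\le 1$, $\psi_i\equiv 1$ on $B_i$, $\supt(\psi_i)\subset 2B_i$, and $\psi_i$ is $(1/\eps)$-Lipschitz. Set $\Sigma(x)=\sum_j\psi_j(x)$ and define $\phii_i^\eps\coloneq \psi_i/\Sigma$. Because $\{B_i\}_i$ covers $X$, every $x$ lies in some $B_i$, so $\psi_i(x)=1$ and hence $\Sigma(x)\ge 1$ everywhere; on the other hand, by the bounded overlap of $\{2B_i\}_i$ supplied by Lemma~\ref{lem:BoundedOverlapCover} (with $K=2$), there is a constant $N=N(C_\mu)$ with $\Sigma(x)\le N$ for every $x$. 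The conditions $0\le \phii_i^\eps\le 1$, $\supt(\phii_i^\eps)\subset 2B_i$, and $\sum_i\phii_i^\eps\equiv 1$ are then immediate.

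It remains to verify the quantitative Lipschitz estimate, which is the only substantive step. For any $x,y\in X$, the triangle inequality yields
\[
|\phii_i^\eps(x)-\phii_i^\eps(y)|\le \frac{|\psi_i(x)-\psi_i(y)|}{\Sigma(x)}+\frac{\psi_i(y)\,|\Sigma(x)-\Sigma(y)|}{\Sigma(x)\Sigma(y)}.
\]
Since $\Sigma\ge 1$ and $\psi_i$ is $(1/\eps)$-Lipschitz, the first term is at most $\eps^{-1}d(x,y)$. The key observation for the second term is that $\psi_j(x)-\psi_j(y)\ne 0$ forces $\{x,y\}\cap 2B_j\ne\emptyset$, and by Lemma~\ref{lem:BoundedOverlapCover} applied at $x$ and at $y$ (with $K=2$) there are at most $2N$ such indices. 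Since each $\psi_j$ is $(1/\eps)$-Lipschitz, we conclude
\[
|\Sigma(x)-\Sigma(y)|\le \sum_{j:\,\psi_j(x)+\psi_j(y)>0}|\psi_j(x)-\psi_j(y)|\le \frac{2N}{\eps}\,d(x,y),
\]
so that the second term is bounded by $2N\eps^{-1}d(x,y)$. Combining the two contributions gives $|\phii_i^\eps(x)-\phii_i^\eps(y)|\le (1+2N)\eps^{-1}d(x,y)$, producing the desired $C/\eps$-Lipschitz bound with $C=C(C_\mu)$.

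The only real obstacle is the control of $|\Sigma(x)-\Sigma(y)|$, and this is dispatched precisely by the bounded overlap property built into Lemma~\ref{lem:BoundedOverlapCover}; everything else is a direct verification from the defining formula for $\psi_i$.
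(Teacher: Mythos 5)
Your proof is correct and is exactly the standard normalization-of-bump-functions argument that the paper invokes by citing Gromov's Appendix B7 rather than proving the lemma itself: truncated distance bumps, a lower bound $\Sigma\ge1$ from the covering, an upper bound and the Lipschitz control of $\Sigma$ from the bounded overlap, and the quotient estimate you wrote out. One cosmetic point: with the cutoff at radius $2\eps$ the support of $\psi_i$ is only guaranteed to lie in the closed ball $\overline{2B_i}$, so to get $\supt(\phii_i^\eps)\subset 2B_i$ literally you should truncate at, say, $\tfrac{3}{2}\eps$ instead, which merely doubles the Lipschitz constant and changes nothing else in the argument.
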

These preceding lemmas will be used in Section~\ref{sec:Existence of Minimizers} to construct a \emph{discrete convolution} of a locally integrable function $f$: 
\[
f_\eps=\sum_if_{B_i}\phii_i^\eps.
\]
Here and throughout the paper, we denote 
\[
f_B=\fint_Bf\,d\mu=\frac{1}{\mu(B)}\int_B f\,d\mu.
\]
At times, we will assume that $X$ satisfies the following connectedness property:

\begin{defn}\label{def:LLC}
    We say that a metric space $(X,d)$ is \emph{linearly locally connected} (LLC) if there exists a constant $C_L\ge 1$ such that for all $x\in X$ and $r>0$, the following conditions hold:
    \begin{enumerate}
        \item Any two points in $B(x,r)$ can be joined by a connected set in $B(x,C_Lr)$.
        \item Any two points in $X\setminus\overline B(x,r)$ can be joined by a connected set in $X\setminus \overline B(x,r/C_L).$
    \end{enumerate}
    We say that $X$ is LLC-1 if the first condition holds and LLC-2 if the second condition holds.
\end{defn}

\subsection{Measure-theoretic boundary, Minkowski content, and Hausdorff measure}\label{sec:Boundaries and Measures}

\begin{defn}
    Let $E\subset X$.  We define the \emph{regularized boundary} of $E$, denoted $\partial^+ E$, to be the collection of points $x\in X$ such that 
    \[
    \mu(B(x,r)\cap E)>0\quad\text{and}\quad\mu(B(x,r)\setminus E)>0
    \]
    for all $r>0$.  We define the \emph{measure-theoretic boundary} of $E$, denoted $\partial^*E$, to be collection of points $x\in X$ such that 
    \[
    \limsup_{r\to 0^+}\frac{\mu(B(x,r)\cap E)}{\mu(B(x,r))}>0\quad\text{and}\quad\limsup_{r\to0^+}\frac{\mu(B(x,r)\setminus E)}{\mu(B(x,r))}>0.
    \]
    We note that $\partial^* E\subset \partial^+ E\subset\partial E$, with $\partial E$ being the topological boundary of $E$. 
\end{defn}

With the following lemma, we show that any measurable set can be modified on a set of measure zero so that its regularized boundary coincides with its topological boundary. 

\begin{lem}\label{lem:Set Representative}
Let $(X,d,\mu)$ be a doubling metric measure space.  If $E\subset X$ is measurable, then there exists  $\wtil E\subset X$ such that $\mu(E\Delta\wtil E)=0$ and $\partial^+E=\partial^+\wtil E=\partial\wtil E$.
\end{lem}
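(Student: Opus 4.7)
The plan is to partition $X$ into its \emph{interior-density} part, \emph{exterior-density} part, and $\partial^+E$, and then redefine $E$ only on this interface in order to force the topological boundary of the modified set to agree with $\partial^+E$.

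First I introduce the two sets
\[
A := \{x \in X : \mu(B(x,r)\setminus E) = 0 \text{ for some } r>0\}, \quad B := \{x \in X : \mu(B(x,r)\cap E) = 0 \text{ for some } r>0\}.
\]
Both are open (if $x\in A$ with witness $r$, then any $y\in B(x,r/2)$ satisfies $B(y,r/2)\subseteq B(x,r)$, hence $y\in A$), and they are disjoint because $\mu(B(x,r))>0$ by the doubling assumption. Directly from the definition of $\partial^+E$ one obtains the trichotomy $X = A \sqcup \partial^+E \sqcup B$. Since $\mu$ is doubling, $X$ is separable, so $A$ is covered by countably many balls on each of which $\mu(\cdot\setminus E)=0$, giving $\mu(A\setminus E)=0$; the symmetric argument yields $\mu(B\cap E)=0$.

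Next I define
\[
\wtil E := A \cup (E\cap\partial^+E).
\]
The inclusion $\wtil E\setminus E \subseteq A\setminus E$ is immediate, and $E\setminus\wtil E \subseteq E\cap B$ holds because any $x$ in this difference is in $E$ but in neither $A$ nor $E\cap\partial^+E$, so $x\notin\partial^+E$, and the trichotomy forces $x\in B$. Both differences are $\mu$-null by the previous paragraph, so $\mu(E\Delta\wtil E)=0$.

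The two boundary identities then follow quickly. Since $\wtil E$ and $E$ agree modulo a $\mu$-null set, $\mu(B(x,r)\cap\wtil E)=\mu(B(x,r)\cap E)$ and $\mu(B(x,r)\setminus\wtil E)=\mu(B(x,r)\setminus E)$ for every $x$ and $r$, giving $\partial^+\wtil E = \partial^+E$ tautologically. For $\partial\wtil E = \partial^+E$, the open set $A$ sits inside $\wtil E$ and the open set $B$ sits inside $\wtil E^c$, so $A\cup B\subseteq\operatorname{int}(\wtil E)\cup\operatorname{int}(\wtil E^c)$, hence $\partial\wtil E\subseteq X\setminus(A\cup B) = \partial^+E$. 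The reverse inclusion is immediate: for $x\in\partial^+E$ and any $r>0$, both $B(x,r)\cap\wtil E$ and $B(x,r)\setminus\wtil E$ have positive measure, hence are nonempty, placing $x$ simultaneously in $\overline{\wtil E}$ and $\overline{\wtil E^c}$. I do not foresee a substantive obstacle: the entire argument is bookkeeping with the trichotomy, together with the elementary fact that an open set covered by countably many balls on which $E$ (respectively $E^c$) has measure zero lies in $E^c$ (respectively $E$) modulo a null set.
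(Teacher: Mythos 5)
Your proof is correct, and while it produces literally the same modified set as the paper --- writing $A$ and $B$ for the measure-theoretic interior and exterior, the paper's $E_-$ and $E_+$ are exactly $E\cap B$ and $A\setminus E$, so $(E\setminus E_-)\cup E_+=A\cup(E\cap\partial^+E)=\wtil E$ --- the way you verify its properties differs from the paper in two respects. First, for the null-set claims the paper invokes the Lebesgue differentiation theorem to get $\mu(E_-)=\mu(E_+)=0$, whereas you observe that $A$ and $B$ are open and use separability (Lindel\"of) to cover them by countably many balls on which $E^c$, respectively $E$, is null; this is more elementary and only uses that balls have positive measure and that a doubling space is separable. Second, for the inclusion $\partial\wtil E\subset\partial^+E$ the paper runs a pointwise two-case argument showing that every $z\in\partial\wtil E$ has $\mu(B(z,r)\cap\wtil E)>0$ and $\mu(B(z,r)\setminus\wtil E)>0$ for all $r>0$, while you get it in one line from the trichotomy $X=A\sqcup\partial^+E\sqcup B$ together with $A\subset\operatorname{int}(\wtil E)$ and $B\subset\operatorname{int}(X\setminus\wtil E)$, which is cleaner and makes the structural reason transparent. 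The only cosmetic point is the disjointness of $A$ and $B$: you should take the minimum of the two witnessing radii before concluding $\mu(B(x,r))=0$, but that is trivial bookkeeping. Both routes are sound; yours trades the density-point machinery the paper reuses elsewhere for a slightly more topological and self-contained argument.
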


\begin{proof}
    We define the following sets:
    \begin{align*}
    &E_-:=\{x\in E:\exists r>0\st \mu(B(x,r)\cap E)=0\}\\
    &E_+:=\{x\in X\setminus E:\exists r>0\st\mu(B(x,r)\setminus E)=0\}.
    \end{align*}
    Since $\mu$ is doubling, and so balls have positive $\mu$-measure, it follows that $E_-\cup E_+\subset\partial E$.
    Let $\wtil E=(E\setminus E_-)\cup E_+$.  It follows that $\mu(E_-)=0=\mu(E_+)$ by the Lebesgue differentiation theorem, and so $\mu(E\Delta\wtil E)=0$.  As such, we have that $\partial^+E=\partial^+\wtil E$ by the definition of the regularized boundary. Since $\partial^+\wtil E\subset\partial\wtil E$, it remains to show the opposite inclusion. 

    Let $z\in \partial\wtil E$, and let $r>0$.  Then there exists $x\in B(z,r/2)\cap\wtil E$.  If $x\in E\setminus E_-$, then for all $\rho>0$, we have that 
    \[
    \mu(B(x,\rho)\cap \wtil E)=\mu(B(x,\rho)\cap E)>0,
    \]
    since $\mu(E\Delta\wtil E)=0$. Hence, $\mu(B(z,r)\cap \wtil E)\ge\mu(B(x,r/2)\cap\wtil E)>0$.  If $x\in E_+$, then there exists $r_x>0$ such that $\mu(B(x,r_x)\setminus \wtil E)=\mu(B(x,r_x)\setminus E)=0$, and so we necessarily have that 
    \[
    \mu(B(z,r)\cap\wtil E)>0. 
    \]
    Hence, for all $r>0$, we have that $\mu(B(z,r)\cap\wtil E)>0$.  
    
    Likewise, since $z\in\partial\wtil E$, there exists $y\in B(z,r/2)\setminus\wtil E$.  If $y\in E$, then we have that $y\in E_-$, in which case there exists $r_y>0$ such that $\mu(B(y,r_y)\cap\wtil E)=\mu(B(y,r_y)\cap E)=0$. Then necessarily we have that $\mu(B(z,r)\setminus\wtil E)>0$.  If $y\in X\setminus E$, then $y\in X\setminus E_+$, and so 
    \[
    \mu(B(z,r)\setminus\wtil E)\ge\mu(B(y,r/2)\setminus \wtil E)=\mu(B(y,r/2)\setminus E)>0.
    \]
    Therefore, for all $r>0$, we have $\mu(B(z,r)\setminus\wtil E)>0$, and so $z\in\partial^+\wtil E$.    
\end{proof}

\begin{defn}
    Let $E\subset X$, let $t>0$, and let $r>0$. We define the \emph{codimension $t$ Minkowski $r$-content} of $E$ by 
    \[
    \M^{-t}_r(E):=\inf\left\{r^{-t}\sum_i\mu(B(x_i,r)):E\subset\bigcup_i B(x_i,r),\,x_i\in E\right\},
    \]
    and we define the \emph{codimension $t$ upper and lower Minkowski contents} of $E$, respectively, by
    \[
    \overline\M^{-t}(E)=\limsup_{r\to 0^+}\M^{-t}_r(E)\quad\text{ and }\quad \underline\M^{-t}(E)=\liminf_{r\to 0^+}\M^{-t}_r(E).
    \]
    The \emph{upper and lower Minkowski codimensions} of $E$ are defined, respectively, by
    \[
    \overline\codim_\M(E)=\sup\{t>0:\overline\M^{-t}(E)<\infty\}\quad\text{and}\quad\underline\codim_\M(E)=\sup\{t>0:\underline\M^{-t}(E)<\infty\}.
    \]
    Note that $\underline\M^{-t}(E)\le\overline\M^{-t}(E)$, and so $\overline\codim_\M(E)\le\underline\codim_\M(E)$.
    We define the \emph{codimension $t$ Hausdorff $r$-content} of $E$ to be 
    \[
    \Ha^{-t}_r(E):=\inf\left\{\sum_i\frac{\mu(B_i)}{\rad(B_i)^{t}}:E\subset\bigcup_i B_i,\,\rad(B_i)\le r\right\},
    \]
    and likewise, we define the \emph{codimension $t$ Hausdorff measure} of $E$ by 
    \[
    \Ha^{-t}(E)=\lim_{r\to 0^+}\Ha^{-t}_r(E).
    \]
    The \emph{Hausdorff codimension} of $E$ is defined by 
    \[
    \codim_\Ha(E)=\sup\{t>0:\Ha^{-t}(E)<\infty\}.
    \]    
    At times we will be considering Hausdorff contents and measures with respect to different references measures.  In these cases we will include the reference measure as a subscript in the notation ($\Ha^{-t}_{\mu,r}(E)$ and $\Ha^{-t}_\mu(E)$, for example).  However, if the reference measure is clear from context, we will drop the reference to simplify the notation.
\end{defn}

\subsection{Newton-Sobolev, BV, and Besov classes}\label{sec:Sobolev BV Besov}
Let $1\le p<\infty$.  Given a family $\Gamma$ of non-constant, compact, rectifiable curves, we define the $p$-modulus of $\Gamma$ by 
\[
\Mod_p(\Gamma)=\inf_\rho\int_X\rho^pd\mu,
\]
where the infimum is taken over all Borel functions $\rho:X\to[0,\infty]$ such that $\int_\gamma \rho\,ds\ge 1$ for all $\gamma\in\Gamma$.  We refer the interested reader to \cite{HKST} for more on modulus of curve families.  Given a function $u:X\to\overline\R$, we say that a Borel function $g:X\to[0,\infty]$ is an \emph{upper gradient} of $u$ if the following holds for all non-constant, compact, rectifiable curves $\gamma:[a,b]\to X$:
\[
|u(x)-u(y)|\le\int_\gamma g\,ds,
\]
whenever $u(x)$ and $u(y)$ are both finite, and $\int_\gamma g\,ds=\infty$ otherwise.  Here $x$ and $y$ denote the endpoints of $\gamma$.  Upper gradients were first defined in \cite{HK}.  We say that $g$ is a \emph{$p$-weak upper gradient} of $u$ if the $p$-modulus of the family of curves where the above inequality fails is zero.

For $1\le p<\infty$, we define $\wtil N^{1,p}(X)$ to be the class of all functions in $L^p(X)$ which have an upper gradient belonging to $L^p(X)$, and we define 
\[
\|u\|_{\wtil N^{1,p}(X)}=\|u\|_{L^p(X)}+\inf_g\|g\|_{L^p(X)},
\]
where the infimum is taken over all upper gradients $g$ of $u$.  We then define an equivalence relation in $\wtil N^{1,p}(X)$ by $u\sim v$ if and only if $\|u-v\|_{\wtil N^{1,p}(X)}=0$.  The \emph{Newton-Sobolev space} $N^{1,p}(X)$ is then defined to be $\wtil N^{1,p}(X)/\sim$, equipped with the norm $\|\cdot\|_{N^{1,p}(X)}:=\|\cdot\|_{\wtil N^{1,p}(X)}$.  One can similarly define $N^{1,p}(\Omega)$ for any open set $\Omega\subset X$.  Each $u\in N^{1,p}(X)$ has a \emph{minimal $p$-weak upper gradient}, denoted $g_u$, which is unique up to sets of measure zero, see for example \cite[Chapter~6]{HKST}.  For more on $p$-modulus, upper gradients, and the Newton-Sobolev class, see \cite{S,BB,H,HKST}.  

Following Miranda Jr.\ \cite{M} we define the \emph{total variation} of a function $u\in L^1_\loc(X)$, by 
\[
\|Du\|(X):=\inf\left\{\liminf_{i\to\infty}\int_X g_{u_i}\,d\mu:N^{1,1}_\loc(X)\ni u_i\to u\text{ in }L^1_\loc(X)\right\},
\]
where $g_{u_i}$ is an upper gradient of $u_i$.  For $u\in L^1(X)$, we say that $u\in BV(X)$, that is, $u$ is of \emph{bounded variation}, if $\|Du\|(X)<\infty$.  Similarly, we can define $\|Du\|(\Omega)$ and $BV(\Omega)$ for open sets $\Omega\subset X$.  For an arbitrary set $A\subset X$, we then set 
\[
\|Du\|(A):=\inf\{\|Du\|(\Omega):A\subset\Omega,\,\Omega\text{ open}\}.
\]
In \cite{M}, it was shown that $\|Du\|(\cdot)$ is a finite Radon measure on $X$ when $u\in BV(X)$.  For a measurable set $E\subset X$, we say that $E$ is a set of \emph{finite perimeter} if $\|D\chi_E\|(X)<\infty$, and we denote the \emph{perimeter of $E$ in $\Omega$} by 
\[
P(E,\Omega):=\|D\chi_E\|(\Omega).
\]
For more on BV functions in the Euclidean and metric settings, see \cite{EG,AFP,AMP,A}.

For $0<s<1$ and $1\le p<\infty$, we define the \emph{Besov energy} of a function $u\in L^1_\loc(X)$ by 
\[
\|u\|^p_{B^s_{p,p}(X)}:=\int_X\int_X\frac{|u(y)-u(x)|^p}{d(x,y)^{sp}\mu(B(x,d(x,y)))}d\mu(y)d\mu(x).
\]
We then define the \emph{Besov space} $B^s_{p,p}(X)$ to be the set of all functions $u\in L^p(X)$ for which this energy is finite.  In $\R^n$, the space $B^s_{p,p}(\R^n)$ corresponds to the fractional Sobolev space $W^{s,p}(\R^n)$.  In \cite{GKS}, it was shown that in doubling metric measure spaces supporting a $(1,p)$-Poincar\'e inequality (see below), $B^\alpha_{p,p}(X)$ coincides with the real interpolation space $(L^p(X),KS^{1,p}(X))_{\alpha,p}$, where $KS^{1,p}(X)$ is the Korevaar-Schoen Sobolev space.   

\subsection{Poincar\'e inequalities and consequences}\label{sec:Poincare inequalities}
Let $1\le p<\infty$.  We say that $(X,d,\mu)$ supports a \emph{$(1,p)$-Poincar\'e inequality} if there exist constants $C\ge 1$ and $\lambda\ge 1$ such that 
\[
\fint_B|u-u_B|d\mu\le C\rad(B)\left(\fint_{\lambda B} g^p\,d\mu\right)^{1/p}
\]
for all balls $B\subset X$ and function-upper gradient pairs $(u,g)$.  If $X$ is a length space supporting a $(1,p)$-Poincar\'e inequality, then we may take $\lambda=1$ \cite{HaKo}.  If $u$ is locally integrable and possesses a $p$-integrable $p$-weak upper gradient in $X$, then the Poincar\'e inequality above holds with $g_u$, the minimal $p$-weak upper gradient of $u$, on the right hand side, see \cite[Proposition~8.1.3]{HKST}.

When $(X,d,\mu)$ is a complete, doubling metric measure space supporting a $(1,1)$-Poincar\'e inequality, Federer's characterization of sets of finite perimeter holds.  That is, $E\subset X$ is such that $P(E,X)<\infty$ if and only if $\Ha^{-1}(\partial^*E)<\infty$ \cite{A,L2}.  In fact, it was shown by Ambrosio \cite{A} (who proved the ``only if'' direction) and Lahti \cite{L3} (who proved the ``if'' direction), that there exists $\gamma>0$, depending only on the doubling and Poincar\'e constants, such that $P(E,X)<\infty$ if and only if $\Ha^{-1}(\Sigma_\gamma E)<\infty$.  Here, $\Sigma_\gamma E$ is the set of all $x\in X$ such that 
\[
\liminf_{r\to 0^+}\frac{\mu(B(x,r)\cap E)}{\mu(B(x,r))}\ge\gamma\quad\text{and}\quad\liminf_{r\to 0^+}\frac{\mu(B(x,r)\setminus E)}{\mu(B(x,r))}\ge\gamma.
\]

Let $1\le p,q<\infty$ and let $0<s<1$.  We say that $X$ supports an \emph{$(s,q,p,p)$-Poincar\'e inequality} if there exist constants $C\ge 1$ and $\lambda\ge 1$ such that 
\[
\left(\fint_B|u-u_B|^qd\mu\right)^{1/q}\le C\rad(B)^s\left(\fint_{\lambda B}\int_{\lambda B}\frac{|u(y)-u(x)|^p}{d(x,y)^{sp}\mu(B(x,d(x,y)))}d\mu(y)d\mu(x)\right)^{1/p}
\]
for all $u\in L^1_\loc(X)$ and balls $B\subset X$.  

The validity of a $(1,p)$-Poincar\'e inequality implies a number of strong geometric properties of $X$, and as a result, it is often assumed as a standing assumption on $X$.  Under mild assumptions on $X$ however, the fractional $(s,q,p,p)$-Poincar\'e inequality always holds:
\begin{lem}\cite[Lemma~2.2]{DLV}
Let $(X,d,\mu)$ be a doubling metric measure space.  Let $0<s<1$ and let $1\le p,q<\infty$ be such that $q\le p$.  Then $X$ supports an $(s,q,p,p)$-Poincar\'e inequality, with $\lambda=1$ and constant $C$ depending only on the $C_\mu$, $s$, $q$, and $p$.
\end{lem}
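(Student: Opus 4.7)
The plan is to reduce to the case $q=p$ via Jensen's inequality, and then to prove the $(s,p,p,p)$-Poincaré inequality by the standard trick of replacing $u_B$ with an averaged integral and multiplying/dividing by the Besov kernel.

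First, since $q\le p$, Jensen's inequality applied to the convex function $t\mapsto t^{p/q}$ yields
\[
\left(\fint_B|u-u_B|^q\,d\mu\right)^{1/q}\le\left(\fint_B|u-u_B|^p\,d\mu\right)^{1/p},
\]
so it suffices to establish the inequality with $q=p$. For that, applying Jensen's inequality to the convex function $t\mapsto|t|^p$ gives
\[
|u(x)-u_B|^p=\left|\fint_B(u(x)-u(y))\,d\mu(y)\right|^p\le\fint_B|u(x)-u(y)|^p\,d\mu(y),
\]
and then integrating $x$ over $B$ (with the averaged measure) produces
\[
\fint_B|u-u_B|^p\,d\mu\le\fint_B\fint_B|u(x)-u(y)|^p\,d\mu(y)\,d\mu(x).
\]

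The next step is to multiply and divide the integrand by $d(x,y)^{sp}\mu(B(x,d(x,y)))$, so as to produce the Besov kernel. For $x,y\in B=B(x_0,r)$ we have $d(x,y)\le 2r$, and furthermore $B(x,d(x,y))\subset B(x,2r)\subset B(x_0,3r)$, so by doubling
\[
\mu(B(x,d(x,y)))\le\mu(B(x_0,3r))\le C_\mu^2\,\mu(B),
\]
while trivially $d(x,y)^{sp}\le(2r)^{sp}$. Plugging these bounds into the integral above yields
\[
\fint_B|u-u_B|^p\,d\mu\le C\,r^{sp}\,\mu(B)\fint_B\fint_B\frac{|u(x)-u(y)|^p}{d(x,y)^{sp}\mu(B(x,d(x,y)))}\,d\mu(y)\,d\mu(x),
\]
and the factor $\mu(B)$ cancels exactly one of the two normalizations of $\fint_B$ to produce the desired form
\[
\fint_B|u-u_B|^p\,d\mu\le C\,r^{sp}\fint_B\int_B\frac{|u(x)-u(y)|^p}{d(x,y)^{sp}\mu(B(x,d(x,y)))}\,d\mu(y)\,d\mu(x).
\]
Taking $p$-th roots gives the $(s,p,p,p)$-Poincaré inequality with $\lambda=1$, and combining with the Jensen reduction above gives the $(s,q,p,p)$-inequality. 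The constant $C$ depends only on $C_\mu$, $s$, $q$, and $p$, as claimed.

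There is no substantive obstacle here: the only nontrivial ingredient is the doubling bound $\mu(B(x,d(x,y)))\le C\mu(B)$, which requires $x$ to lie in $B$ (hence the use of $\fint_B$ rather than $\int$ on the outer variable), and the bookkeeping of the single $\mu(B)$ factor produced by doubling against the $\fint_B\fint_B$ from Jensen, which collapses to the $\fint_B\int_B$ appearing in the statement.
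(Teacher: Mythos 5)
Your proof is correct: Jensen's inequality both to reduce the exponent $q$ to $p$ and to replace $u_B$ by an average, followed by the pointwise bounds $d(x,y)\le 2\rad(B)$ and $\mu(B(x,d(x,y)))\le C_\mu^2\,\mu(B)$ valid for $x,y\in B$, yields exactly the $(s,q,p,p)$-Poincar\'e inequality with $\lambda=1$ and a constant depending only on $C_\mu$, $s$, $p$ (and trivially on $q$). The paper states this lemma only as a citation to \cite[Lemma~2.2]{DLV} without reproducing an argument, and your elementary computation is essentially the standard proof behind that cited result, so there is nothing to correct.
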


If we additionally assume that $\mu$ is reverse doubling, then the exponent on the left-hand side can be improved as follows:
\begin{thm}\cite[Theorem~3.4]{DLV}\label{thm:FractionalPoincare}
    Let $\mu$ be reverse doubling.  Let $0<s<1$ and $1\le p<\infty$ be such that $sp<Q$, where $Q\ge 1$ is as in \eqref{eq:Lower Mass Bound}, and let $p^*=Qp/(Q-sp)$.  Then $X$ supports an $(s,p^*,p,p)$-Poincar\'e inequality with $\lambda=2$ and the constant $C$ depending only on $C_\mu$, $s$, $p$, and the constants from \eqref{eq:Lower Mass Bound} and \eqref{eq:Upper Mass bound}.
\end{thm}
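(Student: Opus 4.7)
The plan is to boot-strap the weak fractional $(s,p,p,p)$-Poincar\'e inequality from the preceding lemma, whose left-hand exponent is only $p$, up to the Sobolev conjugate $p^* = Qp/(Q-sp)$. Reverse doubling enters in an essential way, both in identifying the sharp Sobolev exponent and in making the relevant series in the boot-strap step convergent.

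First I would introduce the pointwise Besov gradient
\[
g^s(x) := \left(\int_X \frac{|u(x)-u(y)|^p}{d(x,y)^{sp}\,\mu(B(x,d(x,y)))}\,d\mu(y)\right)^{1/p},
\]
so that, by Fubini, the Besov energy localized to $A\subset X$ equals $\int_A (g^s)^p\,d\mu$, and the weak Poincar\'e from Lemma~2.2 applied on any ball $B'$ of radius $\rho$ reads $\fint_{B'}|u-u_{B'}|^p\,d\mu \le C\rho^{sp}\fint_{B'}(g^s)^p\,d\mu$. For a Lebesgue point $x \in B = B(x_0,r)$ and dyadic balls $B_k := B(x, 2^{-k}r)$, a standard telescoping of $u(x) - u_B$ together with Jensen's inequality and the weak Poincar\'e applied on each $B_k$ gives the pointwise chaining estimate
\[
|u(x) - u_B| \le C\sum_{k=0}^{\infty}(2^{-k}r)^s\left(\fint_{B_k}(g^s)^p\,d\mu\right)^{1/p},
\]
with every $B_k \subset 2B$ (whence the factor $\lambda = 2$ in the statement).

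Next I would upgrade the left-hand exponent from $p$ to $p^*$ via a truncation argument of Maz'ya type: applying the weak Poincar\'e to the centered truncations $u_t := \min(\max(u - u_B, -t), t)$ at dyadic levels $t = 2^k t_0$, and exploiting that $1$-Lipschitz truncation does not increase the pointwise Besov gradient (so $g^s_{u_t} \le g^s_u$), yields Chebyshev-type bounds on the level sets $\{|u - u_B| > t\} \cap B$ in terms of $\|g^s\|_{L^p(\{|u - u_B| > t/2\}\cap 2B)}$. Combining these level-set estimates through Cavalieri's formula, using the lower mass bound \eqref{eq:Lower Mass Bound} to control the ratios $\mu(B_k)/\mu(B)$ and the hypothesis $sp < Q$ to sum the resulting geometric series, delivers
\[
\left(\fint_B|u-u_B|^{p^*}\,d\mu\right)^{1/p^*} \le Cr^s\left(\fint_{2B}(g^s)^p\,d\mu\right)^{1/p}.
\]

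The main obstacle is the boot-strap step. An alternative route would be a Hedberg-Adams pointwise inequality: split the sum at a level $N = N(x)$ and balance $[M((g^s)^p)(x)]^{1/p}$ against $\mu(B_k)^{-1/p}\|g^s\|_{L^p(2B)}$, optimizing in $N$; this has the attraction of being explicit but requires extra care in integrating the resulting maximal function term against $|u - u_B|^{p^*}$, typically invoking $L^q$-boundedness of the maximal operator for $q = p^*/p > 1$ after a localization argument. Either way, reverse doubling and the strict inequality $sp < Q$ are indispensable to guarantee both that $p^* = Qp/(Q-sp)$ is the correct target exponent and that the geometric series that arise are summable. Tracking the constants through the chaining and boot-strapping yields the claimed dependence on $C_\mu$, $s$, $p$, and the constants from \eqref{eq:Lower Mass Bound} and \eqref{eq:Upper Mass bound}, with $\lambda = 2$ coming from the chaining.
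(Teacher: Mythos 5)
First, note that the paper does not prove this statement at all: it is imported verbatim from \cite{DLV}, so there is no internal proof to compare against, and your sketch has to be judged on its own. Your overall plan — start from the weak $(s,p,p,p)$-Poincar\'e inequality quoted from \cite[Lemma~2.2]{DLV}, chain over the dyadic balls $B(x,2^{-k}r)\subset 2B$ to get a pointwise estimate at Lebesgue points, and then raise the left-hand exponent to $p^*=Qp/(Q-sp)$ by a Maz'ya-type truncation (or a Hedberg split), with $sp<Q$ and the mass bound \eqref{eq:Lower Mass Bound} controlling the geometric series — is the standard and essentially correct strategy for this type of result. Two small corrections to the framing: with $g^s$ defined by an inner integral over all of $X$, Fubini does not give the energy localized to $A\times A$, so in the chaining you should keep the inner integral over $2B$ (as in the functional $g_{u,s,2B}$ used in Lemma~\ref{lem:FracMazya}); and the exponent $p^*$ is dictated by $Q$ from the doubling bound \eqref{eq:Lower Mass Bound}, not by reverse doubling. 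Reverse doubling \eqref{eq:Upper Mass bound} enters the \cite{DLV} argument only through the nondegeneracy property $\mu(B(x,r/2))\le c\,\mu(B(x,r))$ with $c<1$ — see Remark~\ref{rem:Connectedness}, which records exactly this — so it is not what makes your series converge or what ``identifies'' $p^*$.

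The genuine gap is in the boot-strap step. The observation that truncation is $1$-Lipschitz, hence $g^s_{u_t}\le g^s_u$, is not sufficient for a Maz'ya iteration: if each dyadic level-set estimate is only controlled by the full Besov energy of $u$, summing over the levels diverges. What makes the truncation method work for fractional energies is an almost-orthogonality lemma for the band truncations $v_k:=\min\bigl(\max(|u-u_B|-2^k,0),\,2^k\bigr)$, namely $\sum_k|v_k(x)-v_k(y)|^p\lesssim_p|u(x)-u(y)|^p$ for a.e.\ pair $(x,y)$; proving this needs a case analysis, and in particular the local-theory heuristic that ``the gradient of the truncation lives on the level band'' fails verbatim here, since $v_k(x)\ne v_k(y)$ can occur with neither point in the band $\{2^k<|u-u_B|\le 2^{k+1}\}$ (one point far below, one far above). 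Only with that lemma can one apply the weak inequality to each $v_k$, sum via Cavalieri, and obtain the strong $(s,p^*,p,p)$ bound. Your alternative Hedberg route has the matching issue: optimizing the split gives $|u(x)-u_B|^{p^*}\lesssim r^{sp^*}\,M\bigl((g^s)^p\chi_{2B}\bigr)(x)\bigl(\fint_{2B}(g^s)^p\,d\mu\bigr)^{sp^*/Q}$, and since $(g^s)^p$ is merely in $L^1$ the maximal function term admits only a weak $(1,1)$ bound, so this yields a weak-type $p^*$ estimate; upgrading weak to strong again requires the truncation lemma above (there is no ``$L^{p^*/p}$-boundedness of $M$'' to invoke, because the relevant power of $M$ appearing after raising to $p^*$ is exactly $1$). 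So the skeleton is right, but the step you treat as routine is precisely where the real work of \cite[Theorem~3.4]{DLV} lies, and as written it would not close.
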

By applying this result to the case $p=1$, we obtain the following lemma, the proof of which follows mutatis mutandis from \cite[Lemma~2.2]{KKLS}.  We include the proof for the reader's convenience.

\begin{lem}\label{lem:FracMazya}
    Let $\mu$ be reverse doubling, and let $0<s<1$, and let $Q\ge1$ be as in \eqref{eq:Lower Mass Bound}.  There exists $C\ge 1$ such that if $u\in L^1_\loc(X)$ and a ball $B\subset X$ are such that 
    \begin{equation}\label{eq:MazyaGamma}
    \frac{\mu(\{|u|>0\}\cap B)}{\mu(B)}\le\gamma
    \end{equation}
    for some $0<\gamma<1$, then
    \begin{equation*}
        \left(\fint_{B}|u|^{Q/(Q-s)}d\mu\right)^{(Q-s)/Q}\le C\frac{\rad(B)^s}{1-\gamma^{s/Q}}\fint_{2B}\int_{2B}\frac{|u(x)-u(y)|}{d(x,y)^s\mu(B(x,d(x,y)))}d\mu(y)d\mu(x).
    \end{equation*}
    Here the constant $C$ depends only on $C_\mu$, $s$, and the constants from \eqref{eq:Lower Mass Bound} and \eqref{eq:Upper Mass bound}.
\end{lem}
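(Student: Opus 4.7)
The plan is to deduce Lemma~\ref{lem:FracMazya} from Theorem~\ref{thm:FractionalPoincare} applied with $p=1$ (and $p^*=Q/(Q-s)$, which is admissible since $s<1\le Q$), combined with a simple H\"older estimate on $u_B$ that exploits the smallness of the support of $u$ in $B$.

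First I would apply Theorem~\ref{thm:FractionalPoincare} to $u$ on the ball $B$, obtaining
\begin{equation*}
\left(\fint_{B}|u-u_B|^{Q/(Q-s)}\,d\mu\right)^{(Q-s)/Q}\le C\,\rad(B)^s\,I(u,B),
\end{equation*}
where
\begin{equation*}
I(u,B):=\fint_{2B}\int_{2B}\frac{|u(x)-u(y)|}{d(x,y)^s\mu(B(x,d(x,y)))}\,d\mu(y)\,d\mu(x).
\end{equation*}
Denote $\Lambda:=\left(\fint_{B}|u|^{Q/(Q-s)}\,d\mu\right)^{(Q-s)/Q}$. By Minkowski's inequality in $L^{Q/(Q-s)}(B,d\mu/\mu(B))$,
\begin{equation*}
\Lambda\le\left(\fint_{B}|u-u_B|^{Q/(Q-s)}\,d\mu\right)^{(Q-s)/Q}+|u_B|.
\end{equation*}

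Next I would bound $|u_B|$ by exploiting the hypothesis \eqref{eq:MazyaGamma}. Since $u=u\cdot\chi_{\{|u|>0\}}$, H\"older's inequality with conjugate exponents $Q/(Q-s)$ and $Q/s$ yields
\begin{equation*}
|u_B|\le\fint_B|u|\chi_{\{|u|>0\}}\,d\mu\le\left(\fint_B|u|^{Q/(Q-s)}\,d\mu\right)^{(Q-s)/Q}\left(\frac{\mu(\{|u|>0\}\cap B)}{\mu(B)}\right)^{s/Q}\le\gamma^{s/Q}\Lambda.
\end{equation*}

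Combining the previous two displays with the fractional Poincar\'e inequality gives
\begin{equation*}
\Lambda\le C\,\rad(B)^s\,I(u,B)+\gamma^{s/Q}\Lambda.
\end{equation*}
Since $\gamma^{s/Q}<1$, I may absorb the last term on the left, obtaining
\begin{equation*}
\Lambda\le\frac{C\,\rad(B)^s}{1-\gamma^{s/Q}}\,I(u,B),
\end{equation*}
which is the desired inequality. The argument is essentially routine; the only point requiring care is the justification that $\Lambda$ is a priori finite so that the absorption step is legitimate. This is handled by first truncating $u$ at height $k$, applying the above to $\min(|u|,k)$ (which preserves the support hypothesis), and then letting $k\to\infty$ via monotone convergence.
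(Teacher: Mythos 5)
Your proposal is correct and follows essentially the same route as the paper: apply Theorem~\ref{thm:FractionalPoincare} with $p=1$ (dilation $\lambda=2$), split off $|u_B|$ via Minkowski, bound $|u_B|\le\gamma^{s/Q}\Lambda$ by H\"older using \eqref{eq:MazyaGamma}, and absorb. The truncation argument you add for the absorption step is a harmless extra precaution (finiteness of $\Lambda$ already follows from the Minkowski split whenever the right-hand side is finite), but it does not change the substance.
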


\begin{proof}
    Let $u\in L^1_\loc(X)$ and let $B\subset X$ be a ball such that \eqref{eq:MazyaGamma} holds for some $0<\gamma<1$.  For $x\in X$, let 
    \[
    g_{u,s,2B}(x):=\int_{2B}\frac{|u(y)-u(x)|}{d(x,y)^s\mu(B(x,d(x,y)))}d\mu(y),
    \]
    and let $t=Q/(Q-s)$.
    By Minkowski's inequality and Theorem~\ref{thm:FractionalPoincare}, we have that 
    \begin{align*}
        \left(\fint_B|u|^td\mu\right)^{1/t}&\le\left(\fint_B|u-u_B|^td\mu\right)^{1/t}+|u_B|\lesssim\rad(B)^s\fint_{2B}g_{u,s,2B}\,d\mu+|u_B|
    \end{align*}
    We then have from H\"older's inequality that 
    \begin{align*}
    |u_B|\le\frac{1}{\mu(B)}\int_{B\cap\{|u|>0\}}|u|d\mu&\le\frac{1}{\mu(B)}\left(\int_{B\cap\{|u|>0\}}|u|^td\mu\right)^{1/t}\mu(B\cap\{|u|>0\})^{1-1/t}\\
    &\le\gamma^{1-1/t}\left(\fint_B|u|^td\mu\right)^{1/t}.
    \end{align*}
    Absorbing this term to the left hand side of the previous inequality gives
    \[
    \left(\fint_B|u|^td\mu\right)^{1/t}\lesssim\frac{\rad(B)^s}{1-\gamma^{1-1/t}}\fint_{2B}g_{u,s,2B}\,d\mu,
    \]
    with comparison constant coming from Theorem~\ref{thm:FractionalPoincare}.  Hence, this constant depends only on $C_\mu$, $s$, and the constants from \eqref{eq:Lower Mass Bound} and \eqref{eq:Upper Mass bound}.
\end{proof}

\subsection{Hyperbolic fillings}\label{sec:Hyperbolic Filling} 
For a sufficiently regular domain $\Omega\subset\R^n$, $B^{1-\theta/p}_{p,p}(\partial\Omega)$ arises naturally as the trace space of $W^{1,p}(\Omega)$, where $n-\theta$ is the dimension of $\partial\Omega$, see for example \cite{Gal,JW}.  Such a relationship also holds between the Newton-Sobolev class $N^{1,p}(\Omega,\mu)$ and $B^{1-\theta/p}_{p,p}(\partial\Omega,\nu)$ when $\Omega\subset X$ is a uniform domain, and $(X,d,\mu)$ is a doubling metric measure space supporting a $(1,p)$-Poincar\'e inequality \cite{Ma}.  Here $\nu$ is a measure on $\partial\Omega$ which is codimension $\theta$ Ahlfors regular with respect to $\mu$.  That is, there exists a constant $C\ge 1$ such that for all $x\in\partial\Omega$, and $0<r<2\diam(\Omega)$, 
\[
C^{-1}\nu(B(x,r)\cap\partial\Omega)\le\frac{\mu(B(x,r)\cap\Omega)}{(\rad(B))^{\theta}}\le C\nu(B(x,r)\cap\partial\Omega)).
\]
Recently, it was shown by Bj\"orn, Bj\"orn, and Shanmugalingam \cite{BBS} that every compact, doubling metric measure space $(Z,d,\nu)$ arises as the boundary of a uniform space $\Omega$, where the above codimensional relationship between $\nu$ and the measure on $\Omega$ is satisfied.  It was also shown that the Besov spaces on $Z$ arise as traces of the Newton-Sobolev classes on $\Omega$.  The space $\Omega$ was constructed using the hyperbolic filling technique, first introduced by Bonk and Kleiner \cite{BK} and Bourdon and Pajot \cite{BP}.  We outline the construction from \cite{BBS} as follows:

Let $(Z,d,\nu)$ be a compact, doubling metric measure space.  Fix $\alpha>1$, $\tau>1$, and $x_0\in Z$.  For each $n\in\N\cup\{0\}$, choose a maximally $\alpha^{-n}$-separated set $A_n\subset Z$. By scaling the metric if necessary, we may assume that $\diam (Z)<1$, and so $A_0:=\{x_0\}$ for some $x_0\in Z$.  We define the vertex set 
\[
V=\bigcup_{n=0}^\infty\{(x,n):x\in A_n\}.
\]
We then define the edge relationship $\sim$ between vertices by 
$(x,n)\sim(y,m)$ if and only if either $n=m$ and $ B_Z(x,\tau\alpha^{-n})\cap B_Z(y,\tau\alpha^{-m})\ne\varnothing$, or $|n-m|=1$ and $B_Z(x,\alpha^{-n})\cap B_Z(y,\alpha^{-m})\ne\varnothing$.  Thus, vertices $(x,n)$ and $(y,m)$ can only be joined by an edge if $|n-m|\le 1$.  Here  $B_Z$ means that we are considering the ball in $Z$.  We then turn the combinatorial graph given by $V$ and this edge relationship into a metric graph $X$ by assigning a unit length interval to each edge. 
 We define the \emph{hyperbolic filling} of $(Z,d,\nu)$ to be the metric space $(X,d_X)$, where $d_X$ is the path metric on $X$.  

For $\eps>0$, we define the uniformized metric $d_\eps$ on $X$ by
\begin{equation}\label{eq:Unif Metric}
d_\eps(x,y):=\inf_\gamma\int_\gamma e^{-\eps d_X(\cdot, v_0)}ds,
\end{equation}
where $v_0=(x_0,0)$, and the infimum is over all curves in $X$ with endpoints $x$ and $y$. By choosing $\eps=\log\alpha$, it was shown in \cite{BBS} that $(X,d_\eps)$ is a bounded uniform domain.  Furthermore, denoting $\overline X_\eps$ to be the completion of $X$ with respect to $d_\eps$ and $\partial_\eps X:=\overline X_\eps\setminus X$, it was shown that $Z$ is bi-Lipschitz equivalent to $\partial_\eps X$, with constants depending only on $\alpha$ and $\tau$.  Moreover, $\overline X_\eps$ is geodesic.  That is, every two points $x,y\in\overline X_\eps$ can be joined by a curve $\gamma$ such that $\ell(\gamma)=d_\eps (x,y)$. 

Let $\beta>0$.  For each vertex $v=(z,n)\in V$, we define the weight 
\[
\hat\mu_\beta(\{v\})=e^{-\beta n}\nu(B_Z(z,\alpha^{-n})),
\]
and for $A\subset X$, we define the measure $\mu_\beta$ by 
\[
\mu_\beta(A)=\sum_{v\in V}\sum_{w\sim v}(\hat\mu_\beta(\{v\})+\hat\mu_\beta(\{w\}))\Ha^1(A\cap[v,w]),
\]
where $[v,w]$ denotes the edge joining $v$ to $w$.  For $\eps=\log\alpha$ and $\beta>0$, it was shown in \cite{BBS} that the metric measure space $(X,d_\eps,\mu_\beta)$ is doubling and supports a $(1,1)$-Poincar\'e inequality, as does $(\overline X_\eps,d_\eps,\mu_\beta)$, with constants depending only on $\alpha$, $\tau$, $\beta$, and the doubling constant of $\nu$. Furthermore, the following codimensional relationship holds between $\nu$ and $\mu_\beta$:
\begin{equation}\label{eq:HypFillCodim}
    \frac{\mu_\beta(B_\eps(\zeta,r))}{r^{\beta/\eps}}\simeq\nu(B_Z(\zeta,r))
\end{equation}
for all $\zeta\in\partial_\eps X$ and $0<r\le 2\diam X_\eps$.  Here the comparison constants also depend only on $\alpha$, $\beta$, $\tau$, and the doubling constant of $\nu$. It was then shown in \cite{BBS} that for $1\le p<\infty$, there exist bounded trace and extension operators between $N^{1,p}(\overline X_\eps,\mu_\beta)$ and $B^{1-\beta/(\eps p)}_{p,p}(Z,\nu)$.  In particular, the following theorem holds, which we will utilize in Section~\ref{sec:Size of Boundary}:

\begin{thm}\cite[Theorem~12.1]{BBS}\label{thm:HypFillThm}
Let $1\le p<\infty$ and let $0<s<1$.  Let $\eps=\log\alpha$ and choose $\beta>0$ so that $s=1-\beta/(p\eps)$.  Then for each $f\in B^s_{p,p}(Z,\nu)$, there exists $u_f\in N^{1,p}(\overline X_\eps,\mu_\beta)$ such that $u_f=f$ $\nu$-a.e.\ on $Z$, and 
\begin{equation*}
    \int_{\overline X_\eps}g^p_{u_f}d\mu_\beta\lesssim\|f\|^p_{B^s_{p,p}(Z,\nu)}\quad\text{and}\quad\int_{\overline X_\eps}|u_f|^pd\mu_\beta\lesssim\int_Z|f|^pd\nu,
\end{equation*}
where $g_{u_f}$ is the minimal $p$-weak upper gradient of $u_f$, and the comparison constants depend only on $s$, $p$, $\alpha$, $\tau$, and the doubling constant of $\nu$.  Furthermore, for $\nu$-a.e.\ $z\in Z$, we have that 
\[
\lim_{r\to0^+}\fint_{B_\eps(z,r)}|u_f-f(z)|^pd\mu_\beta=0.
\]
That is, $\nu$-a.e.\ $z\in Z$ is an $L^p(\mu_\beta)$-Lebesgue point of $u_f$.
\end{thm}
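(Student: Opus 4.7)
The plan is to construct the extension $u_f$ explicitly on the metric graph underlying $\overline X_\eps$, bound its natural upper gradient in $L^p(\mu_\beta)$ by $\|f\|_{B^s_{p,p}(Z,\nu)}$, and identify $f$ as the boundary trace via a Lebesgue differentiation argument on $(Z,d,\nu)$.

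For the construction, I set
\[ u_f(v) := \fint_{B_v} f\,d\nu \quad\text{where}\quad B_v := B_Z(z,\alpha^{-n}) \]
at each vertex $v = (z,n) \in V$, and extend linearly along each edge $[v,w]$ parametrized by arc length in $d_X$. The resulting function is Lipschitz on each edge, so the piecewise-constant slope $g|_{[v,w]} := |u_f(v) - u_f(w)|/\ell_\eps([v,w])$ is an upper gradient on it, where $\ell_\eps([v,w]) \simeq e^{-\eps n}$ with $n$ the smaller of the two level indices. Applying Jensen's inequality to each defining average and summing against the vertex weights $\hat\mu_\beta(\{v\}) = e^{-\beta n}\nu(B_v)$, together with the bounded overlap of the cover $\{B_v\}_v$ at each scale, gives $\|u_f\|_{L^p(\mu_\beta)} \lesssim \|f\|_{L^p(\nu)}$.

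The energy bound is the heart of the matter. Integrating $g^p$ against $\mu_\beta$ and using the identity $s = 1 - \beta/(\eps p)$ to collapse the exponential prefactors reduces things to
\[ \int_{\overline X_\eps} g^p\,d\mu_\beta \simeq \sum_{v\sim w} \alpha^{nsp}\nu(B_v)\cdot|u_f(v)-u_f(w)|^p. \]
When $v\sim w$, the associated balls $B_v,B_w$ have comparable radii and overlap up to the fixed factor $\tau$, so doubling produces a common enclosing dilate $B^*$ with $\rad(B^*)\simeq\alpha^{-n}$. Jensen then gives
\[ |u_f(v)-u_f(w)|^p \lesssim \fint_{B^*}\fint_{B^*}|f(x)-f(y)|^p\,d\nu(y)d\nu(x), \]
and inserting the Besov weight $d(x,y)^{-sp}\nu(B(x,d(x,y)))^{-1}$ on the right costs at most a factor $C\alpha^{-nsp}\nu(B^*)^{-1}$, which exactly cancels the $\alpha^{nsp}\nu(B_v)$ prefactor up to doubling constants. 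Summing then yields $\int_{\overline X_\eps} g^p\,d\mu_\beta \lesssim \|f\|^p_{B^s_{p,p}(Z,\nu)}$.

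For the trace identification, fix a $\nu$-Lebesgue point $z$ of $f$ in the $L^p$-sense. Every vertex $v=(z',n)$ close to $z$ in $d_\eps$ satisfies $d(z,z')\lesssim\alpha^{-n}$, so $B_v$ sits in a $\nu$-ball about $z$ of comparable radius, whence $|u_f(v)-f(z)|^p \to 0$ as $d_\eps(z,v)\to 0$. Decomposing the $\mu_\beta$-average over $B_\eps(z,r)$ into contributions from vertices in the shadow of $z$ and using the codimensional relation \eqref{eq:HypFillCodim} to convert the vertex-sum back into a $\nu$-average furnishes the claimed Lebesgue-point statement at $\nu$-a.e.\ $z$. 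The main obstacle I anticipate is the combinatorial bookkeeping implicit in the energy step: one must verify that each pair $(x,y) \in Z \times Z$ at scale $\alpha^{-n}$ is captured by only boundedly many neighbouring vertex pairs $v \sim w$ at level $n$, and conversely that every such pair is hit by some scale $\alpha^{-n}$. This accounting leans on the doubling of $\nu$ together with the $\alpha^{-n}$-separation of the nets $A_n$, and is precisely where the tuning $s = 1 - \beta/(\eps p)$ in the hypothesis becomes essential.
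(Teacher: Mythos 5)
The paper gives no proof of this statement: it is imported verbatim from \cite[Theorem~12.1]{BBS}, so your proposal is compared against the argument there. Your construction is indeed the one used in that proof (vertex averages $u_f(v)=\fint_{B_v}f\,d\nu$, linear interpolation along edges, the piecewise slope as upper gradient), your exponent bookkeeping $\alpha^{np}e^{-\beta n}=\alpha^{nsp}$ is right, and the $L^p$ bound via the geometric series $\sum_n e^{-\beta n}$ is fine. The genuine gap is in the energy estimate. Your per-edge inequality $\alpha^{nsp}\nu(B_v)\,|u_f(v)-u_f(w)|^p\lesssim\int_{B^*}\int_{B^*}|f(x)-f(y)|^p\,d(x,y)^{-sp}\nu(B(x,d(x,y)))^{-1}\,d\nu\,d\nu$ is correct, but the subsequent ``summing then yields'' does not produce $\|f\|^p_{B^s_{p,p}(Z,\nu)}$: a fixed pair $(x,y)$ with $d(x,y)\simeq\alpha^{-m}$ lies in $B^*\times B^*$ for boundedly many edges at \emph{each} level $n\lesssim m$, hence for roughly $m\simeq\log_\alpha(1/d(x,y))$ levels in total. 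The bounded-overlap check you flag controls only a single level; across levels the multiplicity is unbounded, so your summation bounds the energy only by a logarithmically weighted Besov energy, with integrand $\bigl(1+\log\tfrac{1}{d(x,y)}\bigr)\frac{|f(x)-f(y)|^p}{d(x,y)^{sp}\nu(B(x,d(x,y)))}$, which is not in general controlled by $\|f\|^p_{B^s_{p,p}(Z,\nu)}$. The standard repair is to sum over scales \emph{before} inserting the Besov weight: bound the level-$n$ contribution by $\alpha^{nsp}\int_Z\fint_{B(x,C\alpha^{-n})}|f(x)-f(y)|^p\,d\nu(y)\,d\nu(x)$, interchange $\sum_n$ with the double integral, and for fixed $(x,y)$ use $\sum_{n:\,C\alpha^{-n}\ge d(x,y)}\alpha^{nsp}\,\nu(B(x,C\alpha^{-n}))^{-1}\lesssim d(x,y)^{-sp}\,\nu(B(x,d(x,y)))^{-1}$, a geometric series (here $\alpha^{sp}>1$) combined with monotonicity of $r\mapsto\nu(B(x,r))$. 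This is precisely where the scale-localized weight, rather than the pointwise Besov weight, must be carried through the summation.

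A smaller inaccuracy occurs in the trace step: it is not true that every vertex $v=(z',n)$ with $d_\eps(z,v)$ small satisfies $d(z,z')\lesssim\alpha^{-n}$; a vertex can be very deep while its base point is far from $z$ relative to its own scale, so pointwise convergence $u_f(v)\to f(z)$ along arbitrary vertices approaching $z$ is not justified at a mere Lebesgue point. What does work is the averaged form you gesture at: for $v\in B_\eps(z,r)$ one has $B_v\subset B_Z(z,Cr)$, so summing $e^{-\beta m}\int_{B_v}|f-f(z)|^p\,d\nu$ over vertices in $B_\eps(z,r)$ and invoking \eqref{eq:HypFillCodim} gives $\fint_{B_\eps(z,r)}|u_f-f(z)|^p\,d\mu_\beta\lesssim\fint_{B_Z(z,Cr)}|f-f(z)|^p\,d\nu$, which tends to $0$ at $\nu$-a.e.\ $z$. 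With these two corrections your outline matches the argument of \cite{BBS}.
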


By \eqref{eq:HypFillCodim}, we have the following lemmas, which relate $\nu$ and codimensional Hausdorff measures on $Z$ to codimensional Haudsorff measures on $\overline X_\eps$:

\begin{lem}\label{lem:ContentCompare}
Let $\eps=\log\alpha$ and $\beta>0$.  Let $\overline X_\eps$ be the completion with respect to $d_\eps$ of the uniformized hyperbolic filling $(X,d_\eps,\mu_\beta)$ of $(Z,d_Z,\nu)$.  If $z\in Z$, $\delta>0$, and $A\subset B_Z(z,r)\cap Z$, then 
\[
\nu(A)\lesssim\Ha^{-\beta/\eps}_{\mu_\beta,\,\delta}(A).
\]
\end{lem}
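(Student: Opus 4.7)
The plan is to reduce an arbitrary covering of $A \subset Z$ by balls in the uniformized filling $\overline X_\eps$ to a covering whose balls are centered at boundary points, where the codimensional relation \eqref{eq:HypFillCodim} is directly applicable. Once that is in place, the doubling of $\mu_\beta$ lets us absorb the blow-up incurred by recentering.

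Concretely, let $\{B_\eps(w_i,r_i)\}_i$ be any countable cover of $A$ by balls in $\overline X_\eps$ with $r_i\le\delta$, discarding those that do not meet $A$. For each $i$, pick a point $\zeta_i\in B_\eps(w_i,r_i)\cap A$. Since $A\subset Z$ and $Z$ is bi-Lipschitz equivalent to $\partial_\eps X$ (with constant depending only on $\alpha,\tau$), we have $\zeta_i\in\partial_\eps X$, and there is a constant $L\ge 1$ such that
\[
A\cap B_\eps(w_i,r_i)\subset B_\eps(\zeta_i,2r_i)\cap Z\subset B_Z(\zeta_i,2Lr_i).
\]

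Applying \eqref{eq:HypFillCodim} at $\zeta_i\in\partial_\eps X$ with radius $2Lr_i$ yields
\[
\nu\bigl(B_Z(\zeta_i,2Lr_i)\bigr)\lesssim\frac{\mu_\beta\bigl(B_\eps(\zeta_i,2Lr_i)\bigr)}{r_i^{\beta/\eps}}.
\]
Since $\zeta_i\in B_\eps(w_i,r_i)$, we have $B_\eps(\zeta_i,2Lr_i)\subset B_\eps(w_i,(2L+1)r_i)$, and the doubling of $\mu_\beta$ on $\overline X_\eps$ (noted in Section~\ref{sec:Hyperbolic Filling}) gives
\[
\mu_\beta\bigl(B_\eps(w_i,(2L+1)r_i)\bigr)\lesssim\mu_\beta\bigl(B_\eps(w_i,r_i)\bigr),
\]
with constants depending only on $\alpha,\tau,\beta$ and the doubling constant of $\nu$. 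Combining these estimates and summing,
\[
\nu(A)\le\sum_i\nu\bigl(A\cap B_\eps(w_i,r_i)\bigr)\lesssim\sum_i\frac{\mu_\beta\bigl(B_\eps(w_i,r_i)\bigr)}{r_i^{\beta/\eps}}.
\]
Taking the infimum over all admissible covers of $A$ yields $\nu(A)\lesssim\Ha^{-\beta/\eps}_{\mu_\beta,\delta}(A)$, as desired.

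The only real subtlety, and the step I view as the main obstacle, is that \eqref{eq:HypFillCodim} is stated for balls centered at points of $\partial_\eps X\cong Z$, whereas the centers $w_i$ in a generic covering may lie anywhere in $\overline X_\eps$. The recentering to $\zeta_i\in A\cap\partial_\eps X$ together with the doubling property of $\mu_\beta$ handles this cleanly, and the bound $r_i\le\delta$ is preserved up to the fixed constant $2L+1$, which is harmless for the $\delta$-content. If some $r_i$ exceeds $2\diam\overline X_\eps$ (the range of validity of \eqref{eq:HypFillCodim}), we may replace $B_\eps(w_i,r_i)$ by $\overline X_\eps$ itself, which costs only a single additional term bounded by $\nu(Z)$ and does not affect the estimate since $A$ is then covered by finitely many admissible balls.
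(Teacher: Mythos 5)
Your proof is correct and follows essentially the same route as the paper: both arguments replace each covering ball by a ball of comparable radius centered at a point of $Z$ (you recenter at $\zeta_i\in A\cap B_\eps(w_i,r_i)$), then invoke the codimension relation \eqref{eq:HypFillCodim} together with the doubling of $\mu_\beta$ and the bi-Lipschitz equivalence of $d_\eps$ and $d_Z$ on $Z$, and sum over the cover. Your closing remark about radii exceeding $2\diam X_\eps$ is somewhat muddled, but this edge case is immaterial (and is glossed over in the paper's proof as well), so it does not affect the argument.
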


\begin{proof}
    Let $\{B_k\}_k$ be a sequence of balls $B_k\subset\overline X_\eps$ with $\rad(B_k)\le \delta$ such that $A\subset\bigcup_k B_k$.  Without loss of generality, we may assume that $B_k\cap A\ne\varnothing$ for all $k$. Then, for each $k$, there exists $B_k'$ such that $B_k\subset B_k'$, $B_k'$ is centered at a point in $Z$, and $\rad(B_k)\simeq\rad(B_k')$.  We then have that 
    \[
    \sum_k\frac{\mu_\beta (B_k)}{\rad(B_k)^{\beta/\eps}}\simeq\sum_k\frac{\mu_\beta (B_k')}{\rad(B_k')^{\beta/\eps}}\simeq\sum_k\nu(B_k')\ge \nu(A).
    \]
    Here we have used the doubling property of $\mu_\beta$, and bi-Lipschitz equivalence of $d_\eps$ and $d_Z$ on $Z$ in conjunction with \eqref{eq:HypFillCodim}.  This completes the proof, as the cover $\{B_k\}_k$ is arbitrary. 
    \end{proof}

\begin{lem}\label{lem:CodimComp}
Let $\eps=\log\alpha$ and $0<\beta\le\eps$.  Let $\overline X_\eps$ be the completion with respect to $d_\eps$ of the uniformized hyperbolic filling $(X,d_\eps,\mu_\beta)$ of $(Z,d_Z,\nu)$.  Then 
\[
\Ha^{-1}_{\mu_\beta}|_Z\simeq\Ha^{-(1-\beta/\eps)}_\nu.
\]
\end{lem}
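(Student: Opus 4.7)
My plan is to prove both inequalities $\Ha^{-1}_{\mu_\beta}|_Z(A)\lesssim\Ha^{-(1-\beta/\eps)}_\nu(A)$ and $\Ha^{-1}_{\mu_\beta}|_Z(A)\gtrsim\Ha^{-(1-\beta/\eps)}_\nu(A)$ directly from the codimensional relationship \eqref{eq:HypFillCodim}, mimicking the strategy already employed in Lemma~\ref{lem:ContentCompare}. Fix $A\subset Z$ and $\delta>0$.

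For the $\gtrsim$ direction, I would take an arbitrary covering $\{B_k\}_k$ of $A$ by balls $B_k\subset\overline X_\eps$ with $\rad(B_k)\le\delta$, and, after discarding those disjoint from $A$, replace each $B_k$ by a ball $B_k'$ centered at a point of $Z$ (equivalently, $\partial_\eps X$ via the bi-Lipschitz identification) with $B_k\subset B_k'$ and $\rad(B_k')\simeq\rad(B_k)$. Using the doubling property of $\mu_\beta$ together with \eqref{eq:HypFillCodim}, I get
\[
\sum_k\frac{\mu_\beta(B_k)}{\rad(B_k)}\simeq\sum_k\frac{\mu_\beta(B_k')}{\rad(B_k')}\simeq\sum_k\frac{\nu(B_k'\cap Z)}{\rad(B_k')^{1-\beta/\eps}}\ge\Ha^{-(1-\beta/\eps)}_{\nu,\,C\delta}(A),
\]
where the middle step is where \eqref{eq:HypFillCodim} is applied (dividing out the extra factor $\rad(B_k')^{\beta/\eps}$ and regrouping). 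Taking the infimum over all such covers and then letting $\delta\to 0^+$ yields one inequality.

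For the $\lesssim$ direction, I would take a near-optimal cover of $A$ by balls $\{B_Z(z_k,r_k)\}_k$ in $Z$ with $r_k\le\delta$. Because $Z$ is bi-Lipschitz equivalent to $\partial_\eps X\subset\overline X_\eps$, these correspond to balls $B_\eps(z_k,r_k')$ in $\overline X_\eps$, with $r_k'\simeq r_k$, which still cover $A$. Applying \eqref{eq:HypFillCodim} directly gives
\[
\sum_k\frac{\mu_\beta(B_\eps(z_k,r_k'))}{r_k'}\simeq\sum_k\frac{\nu(B_Z(z_k,r_k))}{r_k^{1-\beta/\eps}},
\]
which bounds $\Ha^{-1}_{\mu_\beta,\,C\delta}(A)$ from above by a constant times the near-optimal Hausdorff $(1-\beta/\eps)$-content of $A$ with respect to $\nu$. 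Passing to the infimum and then $\delta\to 0^+$ finishes this direction.

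The assumption $0<\beta\le\eps$ ensures that the exponent $1-\beta/\eps$ lies in $[0,1)$, so the codimension Hausdorff measure on $Z$ is well defined in the usual sense. The main technical nuisance, rather than a genuine obstacle, is the bookkeeping when a ball in $\overline X_\eps$ from the $\gtrsim$ covering is not centered in $Z$: this is handled by the standard trick of replacing such a ball by a boundedly enlarged one centered at a chosen point of $B_k\cap A\subset Z$, so that the doubling property lets us control $\mu_\beta(B_k')/\mu_\beta(B_k)$ and $\rad(B_k')/\rad(B_k)$ uniformly. All comparison constants will depend only on $\alpha$, $\beta$, $\tau$, the doubling constant of $\nu$, and the bi-Lipschitz constant identifying $Z$ with $\partial_\eps X$.
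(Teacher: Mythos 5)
Your argument is correct and is essentially the paper's proof: the paper simply notes that the comparison follows directly from \eqref{eq:HypFillCodim} together with the bi-Lipschitz equivalence of $d_\eps$ and $d_Z$ on $Z$, which is exactly the two-sided covering/recentering argument (in the style of Lemma~\ref{lem:ContentCompare}) that you spell out. No gaps; your write-up just makes the bookkeeping explicit.
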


\begin{proof}
    The result follows directly from \eqref{eq:HypFillCodim}.  Note that by by the bi-Lipschitz equivalence of $d_\eps$ and $d_Z$ on $Z$, we can equivalently define $\Ha^{-(1-\beta/\eps)}_\nu$ with respect to $d_\eps$.  
\end{proof}

\subsection{Fractional $s$-perimeter and minimizers of the $\J_\Omega^s$ functional}\label{sec:Fractional Perimeter} 
 It was shown in \cite{D} that for a measurable set $E\subset\R^n$, the following formula holds:
 \[
 \lim_{s\to 1^-}(1-s)\|\chi_E\|_{B^s_{1,1}(\R^n)}=C_nP(E,\R^n),
 \]
 where $C_n$ is a dimensional constant and $P(E,\R^n)=\|D\chi_E\|(\R^n)$ is the perimeter of $E$ in $\R^n$, see Section~\ref{sec:Sobolev BV Besov}.  More recently, it was shown in \cite{DS} that in a complete, doubling metric measure space supporting a $(1,1)$-Poincar\'e inequality, the following holds for measurable sets $E\subset X$:
 \[
 P(E,X)\lesssim\liminf_{s\to 1^-}(1-s)\|\chi_E\|_{B^s_{1,1}(X)}\le\limsup_{s\to 1^-}(1-s)\|\chi_E\|_{B^s_{1,1}(X)}\lesssim P(E,X).
 \]
 In this manner, when $p=1$, the Besov energy of the characteristic function of a set $E$ recovers the perimeter of $E$ under suitable rescaling as $s\to 1^-$.  For $0<s<1$, we therefore define the \emph{$s$-perimeter} of a measurable set $E\subset X$, by 
 \[
 P_s(E,X):=\|\chi_E\|_{B^s_{1,1}(X)}=\int_X\int_X\frac{|\chi_E(x)-\chi_E(y)|}{d(x,y)^s\mu(B(x,d(x,y)))}d\mu(y)d\mu(x).
 \]
 If $\mu$ is doubling, then we have that 
 \begin{equation}\label{eq:FracPerKernel}
     P_s(E,X)\simeq\int_{E}\int_{X\setminus E}\frac{1}{d(x,y)^s\mu(B(x,d(x,y)))}d\mu(y)d\mu(x).
 \end{equation}
 The $s$-perimeter generates the following notion of codimension, as originally defined in \cite{V}:
\begin{defn}
    Let $E\subset X$ be measurable.  We define the \emph{fractional codimension} of $E$ by 
    \[
    \codim_F(E)=\sup\{0<s<1:P_s(E,X)<\infty\}.
    \]
\end{defn}
\noindent In Section~\ref{sec:Size of Boundary} and \ref{sec:codim Examples}, we will study the relationship between the fractional codimension of a set and the Minkowski and Hausdorff codimensions of its boundary. 

For $x,y\in X$, we define the kernel 
\[
K_s(x,y):=\frac{2}{d(x,y)^s[\mu(B(x,d(x,y)))+\mu(B(y,d(x,y)))]},
\]
and for measurable sets $A,B\subset X$, we adopt the following notation:
\[
L_s(A,B):=\int_A\int_BK_s(x,y)d\mu(y)d\mu(x).
\]
By symmetry of the kernel $K_s$, we have that 
\[
L_s(A,B)=L_s(B,A),
\]
and if $\mu$ is doubling, then 
\begin{equation}\label{eq:LsKernel}
L_s(A,B)\simeq\int_A\int_B\frac{1}{d(x,y)^s\mu(B(x,d(x,y)))}d\mu(y)d\mu(x),
\end{equation}
with comparison constants depending only on the doubling constant of $\mu$.

 Given a bounded domain $\Omega\subset X$ with $X\setminus\Omega\ne\varnothing$, and a set $E\subset X$, we define the following functional $\J^s_\Omega$ as introduced in \cite{CRS} in the Euclidean setting:
 \begin{align}\label{eq:JOmegaDef}
     \J_\Omega^s(E):=L_s(E\cap\Omega,X\setminus E)+L_s(E\setminus\Omega,\Omega\setminus E).
 \end{align}
 From \eqref{eq:FracPerKernel}, \eqref{eq:LsKernel}, and \eqref{eq:JOmegaDef}, we see that in doubling metric measure spaces, this functional incorporates much of the $s$-perimeter of $E$, but it excludes the term $L_s(E\setminus\Omega,X\setminus (E\cup\Omega))$ from the $s$-perimeter.  The reason for this exclusion is that this term may be infinite, and in the following minimization problem, candidate sets will be fixed outside of $\Omega$.  Therefore, this exclusion does not affect the minimization process.  
 \begin{defn}\label{def:Fractional Minimizer}
    Let $\Omega\subset X$ be a bounded domain, and let $0<s<1$.  Let $F\subset X$ be a measurable set such that $F\setminus\Omega\ne\varnothing$.  We say that $E\subset X$ is a \emph{minimizer} of $\J_\Omega^s$ if $\J_\Omega^s(E)<\infty$, $E\setminus\Omega=F\setminus\Omega$, and 
    \[
    \J_\Omega^s(E)\le\J_\Omega^s(E')
    \]
    for all $E'\subset X$ such that $E'\setminus\Omega=F\setminus\Omega$.
 \end{defn}
 This problem was introduced in the Euclidean setting in \cite{CRS}, where existence and regularity of minimizers was studied. 
 We will study existence and regularity of minimizers in the doubling metric measure space setting in Sections~\ref{sec:Existence of Minimizers} and \ref{sec:Regularity of Minimizers}.

We conclude this section with a lemma concerning minimizers of the $\J_\Omega^s$ functional, which can be found in \cite[Section~2]{CRS}.  We will use this lemma to study regularity of minimizers in Section~\ref{sec:Regularity of Minimizers}.

\begin{lem}\label{lem:SupSubSoln}
Let $E$ be a minimizer of $\JOm^s$.  If $A\subset (E\cap\Omega),$ then
\begin{equation}\label{eq:SuperSoln}
L_s(A, X\setminus E)\le L_s(E\setminus A, A).
\end{equation}
\end{lem}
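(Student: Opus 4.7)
The plan is to compare the minimizer $E$ with its admissible modification $E' := E \setminus A$, which is a valid competitor because $A \subset E \cap \Omega$ guarantees $E' \setminus \Omega = E \setminus \Omega$. The whole proof is a bookkeeping exercise in expanding $\J_\Omega^s(E)$ and $\J_\Omega^s(E')$ using the linearity of $L_s$ in each argument over disjoint decompositions, and then cancelling the common terms.

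Concretely, I would first record the set identities (using $A \subset E \cap \Omega$):
\[
E' \cap \Omega = (E \cap \Omega) \setminus A, \quad X \setminus E' = (X \setminus E) \cup A, \quad E' \setminus \Omega = E \setminus \Omega, \quad \Omega \setminus E' = (\Omega \setminus E) \cup A,
\]
where in each union the two pieces are disjoint. Using disjointness, I would split
\[
L_s(E' \cap \Omega, X \setminus E') = L_s((E \cap \Omega) \setminus A, X \setminus E) + L_s((E \cap \Omega) \setminus A, A),
\]
\[
L_s(E' \setminus \Omega, \Omega \setminus E') = L_s(E \setminus \Omega, \Omega \setminus E) + L_s(E \setminus \Omega, A),
\]
and analogously $L_s(E \cap \Omega, X \setminus E) = L_s((E \cap \Omega) \setminus A, X \setminus E) + L_s(A, X \setminus E)$.

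Substituting into $\J_\Omega^s(E) \le \J_\Omega^s(E')$, the term $L_s((E \cap \Omega) \setminus A, X \setminus E)$ and $L_s(E \setminus \Omega, \Omega \setminus E)$ cancel, leaving
\[
L_s(A, X \setminus E) \;\le\; L_s((E \cap \Omega) \setminus A,\, A) + L_s(E \setminus \Omega,\, A).
\]
To finish, I would observe that $((E \cap \Omega) \setminus A) \sqcup (E \setminus \Omega) = E \setminus A$ (a disjoint decomposition, since $A \subset \Omega$), so additivity of $L_s$ in the first slot collapses the right-hand side to $L_s(E \setminus A, A)$, giving \eqref{eq:SuperSoln}.

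There is no real obstacle here — the only subtle point is to track the asymmetry of $L_s$ in the metric setting and make sure the decompositions are done in the correct argument slot. Since all additivity invoked is over honestly disjoint unions, the asymmetry causes no trouble, and no use of the comparability \eqref{eq:Ls(A,B)} is needed.
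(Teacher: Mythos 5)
Your proof is correct and takes essentially the same route as the paper: compare $E$ with the competitor $E\setminus A$, expand both energies via additivity of $L_s$ over disjoint decompositions in each slot, and cancel the common terms. The only point the paper makes explicit that you leave implicit is that the cancelled terms $L_s((E\cap\Omega)\setminus A,\,X\setminus E)$ and $L_s(E\setminus\Omega,\,\Omega\setminus E)$ are finite (both are bounded by $\J_\Omega^s(E)<\infty$, since $E$ is a minimizer), which is what justifies cancelling them across the inequality.
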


\begin{proof}
   Since $E$ is a minimizer, by taking $E\setminus A$ as a competitor, we obtain 
   \begin{align*}
       0\le\JOm^s(E\setminus A)-\JOm^s(E)=L_s((&E\setminus A)\cap\Omega,X\setminus (E\setminus A))+L_s(E\setminus\Omega,\Omega\setminus (E\setminus A))\\
       &-L_s(E\cap\Omega,X\setminus E)-L_s(E\setminus\Omega,\Omega\setminus E).
   \end{align*}
   Since $\JOm^s(E)<\infty$, we know that $L_s(E\cap\Omega,\Omega\setminus E)<\infty$.  We then have that 
   \[
   L_s(E\setminus\Omega,\Omega\setminus(E\setminus A))-L_s(E\setminus\Omega,\Omega\setminus E)=L_s(E\setminus\Omega, A)=L_s((E\setminus A)\setminus\Omega,A).
   \]
   We also have that 
   \[
   L_s((E\setminus A)\cap\Omega,X\setminus (E\setminus A))=L_s((E\setminus A)\cap\Omega, X\setminus E)+L_s((E\setminus A)\cap\Omega,A).
   \]
    Substituting both of these expressions into the previous expression yields
    \begin{align*}
    0&\le L_s((E\setminus A)\cap\Omega, X\setminus E)+L_s((E\setminus A)\cap\Omega,A)+L_s((E\setminus A)\setminus\Omega,A)-L_s(E\cap\Omega,X\setminus E)\\
    &=L_s(E\setminus A,A)+L_s((E\setminus A)\cap\Omega,X\setminus E)-L_s(E\cap\Omega, X\setminus E)\\
    &=L_s(E\setminus A,A)-L_s(A,X\setminus E).
    \end{align*}
    Note that both $L_s(E\cap\Omega, X\setminus E)$ and $L_s((E\setminus A)\cap\Omega, X\setminus E)$ are finite since $\JOm^s(E)<\infty$.  This allows us to obtain the last equality in the above expression.
\end{proof}

\section{On the boundary size of sets of finite $s$-perimeter}\label{sec:Size of Boundary}

In this section, we let $(Z,d,\nu)$ be a metric measure space, with $\nu$ a doubling Borel regular measure.  Our goal here is to explore the relationship between the $s$-perimeter of a subset of $Z$ and the size of its boundary.  Recall the definition of $s$-perimeter: for $E\subset Z$, we define $P_s(E,Z)=\|\chi_E\|_{B^s_{1,1}(Z)}$.  
We begin by proving a sufficient condition, given in terms of the upper Minkowski content of the regularized boundary, guaranteeing that a set has finite $s$-perimeter.  This result was first proved in the Euclidean setting by Visintin, see \cite[Propositions~11 and 13]{V}.  We show here that the same result holds in doubling metric measure spaces satisfying the LLC-1 condition, see Definition~\ref{def:LLC}.

For ease of notation in the following proof, we define the following for measurable sets $A,B\subset Z$:
\begin{equation*}
    J_s(A,B):=\int_A\int_B\frac{1}{d(z,w)^s\nu(B(z,d(z,w)))}d\nu(w)d\nu(z).
\end{equation*}

\begin{proof}[Proof of Theorem~\ref{thm:MinkowskiSufficient}]
    Let $E\subset Z$ be bounded and measurable, and suppose that there exist $t>0$ such that $\overline\M^{-t}(\partial^+E)<\infty$.  By Lemma~\ref{lem:Set Representative} and the fact that the Besov energy does not detect  measure zero changes in sets, we may assume without loss of generality that $\partial^+E=\partial E$. 
    
    Let $\eps>0$, and let $0<s<t$.  By definition, there exists $r_0>0$ such that for all $0<r<r_0$, there exists a countable cover $\{B(x_i,r)\}_{i\in\N}$ of $\partial E$, with each $x_i\in\partial E$, such that    \begin{equation}\label{eq:Mink 2}
        r^{-t}\sum_i\nu(B(x_i,r))<\overline\M^{-t}(\partial E)+\eps.
    \end{equation}
   For each $k\in\N$, let 
    \[
    E_k:=\{z\in E:2^{-k-1}r_0\le\dist (z,Z\setminus E)<2^{-k}r_0\},
    \]
    and let $E_0=E\setminus\left( \bigcup_{k\in\N}E_k\cup\partial E\right).$  Since $\overline\M^{-t}(\partial E)<\infty$ and $t>0$, it follows that $\nu(\partial E)=0$, and so we have that 
   \begin{align}\label{eq:Mink 1}
    \|\chi_E\|_{B^s_{1,1}(Z,\nu)}\simeq J_s(E,Z\setminus E)\simeq J_s(E_0,Z\setminus E)+\sum_{k\in\N}J_s(E_k,Z\setminus E).
   \end{align}
   For each $z\in E_0$, we have that $\dist(z,Z\setminus E)\ge r_0$, and so by doubling we have that 
   \begin{align}\label{eq:Mink 5}
    J_s(E_0,Z\setminus E)&\le\int_{E_0}\int_{Z\setminus B(z_,r_0)}\frac{1}{d(z,w)^s\nu(B(z,d(z,w)))}d\nu(w)d\nu(z)\nonumber\\
    &\le\int_{E_0}\sum_{m=0}^\infty\int_{B(z,2^{m+1}r_0)\setminus B(z,2^mr_0)}\frac{1}{d(z,w)^s\nu(B(z,d(z,w)))}d\nu(w)d\nu(z)\nonumber\\
    &\simeq\int_{E_0}\sum_{m=0}^\infty (2^mr_0)^{-s}d\nu(z)\simeq r_0^{-s}\nu(E_0)<\infty.
   \end{align}
   Here, the finiteness is due to boundedness of $E$.

   For each $k\in\N$, we have by a similar computation that 
   \begin{align*}
       J_s(E_k,Z\setminus E)&\le\int_{E_k}\int_{Z\setminus B(z,2^{-k-1}r_0)}\frac{1}{d(z,w)^s\nu(B(z,d(z,w)))}d\nu(w)d\nu(z)\\
       &=\int_{E_k}\sum_{m=0}^\infty\int_{B(z,2^{-k+m}r_0)\setminus B(z,2^{-k-1+m}r_0)}\frac{1}{d(z,w)^s\nu(B(z,d(z,w)))}d\nu(w)d\nu(z)\\
       &\simeq\frac{\nu(E_k)}{(2^{-k}r_0)^s}.
   \end{align*}
  For each $z\in E_k$, there exists $w\in Z\setminus E$ such that $d(z,w)<2^{-k}r_0$.  By the LLC-1 condition, there exists a connected set in $B(z,C_L2^{-k}r_0)$ joining $z$ to $w$, where $C_L\ge 1$ is the constant from \eqref{def:LLC}.  Since $z\in E_k\subset E$ and $w\in Z\setminus E$, it follows that there exists $y\in B(z,C_L2^{-k}r_0)\cap \partial E$.  Therefore, 
  \[
  E_k\subset\{z\in Z:\dist(z,\partial E)< C_L2^{-k}r_0\}=:\mathcal{N}_{C_L2^{-k}r_0}(\partial E),
  \]
  and so combining this with the previous inequality, we have
  \begin{equation}\label{eq:Mink 4}
  J_s(E_k,Z\setminus E)\lesssim\frac{\nu(\No_{C_L2^{-k}r_0}(\partial E))}{(2^{-k}r_0)^s}.
  \end{equation}
  By \eqref{eq:Mink 2}, there exists a cover $\{B(x_i,2^{-k}r_0)\}_{i\in\N}$ of $\partial E$, with $x_i\in\partial E$, such that 
  \begin{equation}\label{eq:Mink 3}
  (2^{-k}r_0)^{-t}\sum_{i}\nu(B(x_i,2^{-k}r_0))<\overline\M^{-t}(\partial E)+\eps.
  \end{equation}
  We then claim that 
  \[
  \No_{C_L2^{-k}r_0}(\partial E)\subset\bigcup_{i}B(x_i,2C_L2^{-k}r_0).
  \]
Indeed, if $x\in\No_{C_L2^{-k}r_0}(\partial E)$, then there exists $y\in\partial E$ such that $d(x,y)<C_L2^{-k}r_0$.  There also exists $i\in\N$ such that $y\in B(x_i,2^{-k}r_0)$, and so $d(x,x_i)<(C_L+1)2^{-k}r_0\le 2C_L2^{-k}r_0$.  The claim follows.  Therefore, by \eqref{eq:Mink 4}, \eqref{eq:Mink 3}, and the doubling property of $\nu$, we have that 
  \begin{align*}
      J_s(E_k,Z\setminus E)\lesssim (2^{-k}r_0)^{-s}\sum_{i}\nu(B(x_i,2C_L2^{-k}r_0))&\lesssim(2^{-k}r_0)^{-s}\sum_{i}\nu(B(x_i,2^{-k}r_0))\\
      &\le(2^{-k}r_0)^{t-s}(\overline\M^{-t}(\partial E)+\eps).
  \end{align*}
  Since $t>s$, it follows that 
  \[
  \sum_kJ_s(E_k,Z\setminus E)\lesssim r_0^{t-s}(\overline\M^{-t}(\partial E)+\eps)<\infty.
  \]
  Combining this with \eqref{eq:Mink 5} and \eqref{eq:Mink 1} completes the proof.
\end{proof}

\begin{example}\label{ex:Mink LLC}
   We point out that the conclusion of the above theorem may fail if $Z$ does not satisfy the LLC-1 condition.  For example, consider the standard slit disk in $\R^2$, but replace the slit with a copy of the von Koch snowflake curve. Let $Z$ be this modified slit disk, equipped with Euclidean distance and 2-dimensional Lebesgue measure.  Note that $Z$ is not LLC-1.  Let $E\subset Z$ be the points in $Z$ lying below the curve and with positive first coordinate, see Figure~\ref{fig:Snowflake Slit disk}.  By the argument from the proof of \cite[Theorem~1.1]{Lom}, it follows that $\chi_E\not\in B^{2-\log 4/\log 3}_{1,1}(Z)$.  However, $\partial^+E$ consists of the points $\{(0,y):-1<y<0\}$, and so $\overline\M^{-t}(\partial^+E)=0$ for all $0<t<1$.  This occurs because points in $E$ can be arbitrarily close to $Z\setminus E$ without being near $\partial^+E$.  The LLC-1 condition prevents this. 
\end{example}

\begin{figure}[h]
    \centering
\includegraphics[scale=0.91]{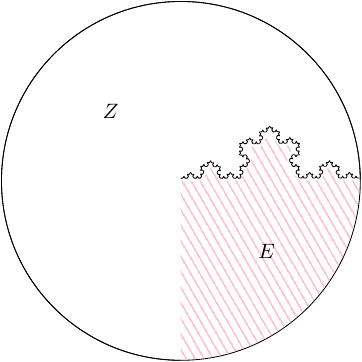}
\caption{Example~\ref{ex:Mink LLC}}\label{fig:Snowflake Slit disk}
\end{figure}

Given a set $E\subset Z$ with finite $s$-perimeter, we now wish to analyze the codimension $s$ Hausdorff measure of the measure-theoretic boundary of $E$.  To do so, we will use the hyperbolic filling, where the presence of a $(1,1)$-Poincar\'e inequality provides us certain potential theoretic tools. 
 In particular, we will use the following lemma, which is a modification of \cite[Theorem~5.9]{HK}. This can be found for example in \cite[Lemma~3.1]{EGKS}, where it was proved in greater generality.  We include the proof of the particular case pertinent to us for the convenience of the reader. 
\begin{lem}\label{lem:Loewner}
Suppose that $(X,d,\mu)$ is a geodesic space, with $\mu$ a doubling measure supporting a $(1,1)$-Poincar\'e inequality, and let $0<t<1$.  Then there exists a constant $C\ge 1$ such that if $r>0$, $E$ and $F$ are subsets of a ball $B(x,r)$, and $\lambda>0$ satisfies
\[
\min\{\Ha^{-t}_r(E),\Ha^{-t}_r(F)\}\ge\lambda\frac{\mu(B(x,r))}{r^t},
\]
then for any function $u\in N^{1,1}(X)$ such that $u=1$ on $E$, $u=0$ on $F$, and each $z\in E\cup F$ such that $z$ is an $L^1(\mu)$-Lebesgue point of $u$, we have that 
\[
\int_{B(x,r)}g_u\,d\mu\ge\frac{\lambda}{C}\frac{\mu(B(x,r))}{r},
\]
where $g_u$ is the minimal $1$-weak upper gradient of $u$.
\end{lem}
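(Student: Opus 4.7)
The plan is to prove this Loewner‑type capacity estimate via the now-standard chain‑Poincaré plus Vitali‑covering strategy from analysis on metric spaces.

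First, for each Lebesgue point $z\in B:=B(x,r)$ of $u$, I would telescope along the balls $B_k(z):=B(z,2^{-k}r)$. Using doubling and the $(1,1)$-Poincaré inequality, which holds with dilation constant $1$ since $X$ is geodesic (by \cite{HaKo}), the standard chain argument yields
\[
|u(z)-u_B|\;\lesssim\;Ig_u(z)\;+\;r\fint_{3B}g_u\,d\mu,\qquad Ig_u(z):=\sum_{k=0}^\infty 2^{-k}r\fint_{B_k(z)}g_u\,d\mu.
\]
If the additive boundary term already satisfies $r\fint_{3B}g_u\,d\mu\gtrsim 1$, then by doubling $\int_{B}g_u\,d\mu\gtrsim\mu(B)/r\ge\lambda\mu(B)/r$ (using $\lambda\le 1$, which holds because covering $E$ by the single ball $B$ gives $\Ha^{-t}_r(E)\le\mu(B)/r^t$). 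Otherwise, the dichotomy $u_B\le 1/2$ or $u_B\ge 1/2$, together with the symmetry $u\leftrightarrow 1-u$ and $E\leftrightarrow F$, lets me assume WLOG that $u_B\le 1/2$ and conclude $Ig_u(z)\gtrsim 1$ for every Lebesgue point $z\in E$.

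The core step is to convert this pointwise lower bound, valid on a set of large codimension-$t$ content, into a lower bound on $\int_B g_u\,d\mu$. For each such $z$, I would apply the weighted pigeonhole principle to the series $Ig_u(z)$ with weights $c_k=2^{-k(1-t)}$, which are summable precisely because $t<1$. This selects a scale $\rho_z=2^{-k_z}r$ satisfying
\[
\rho_z\fint_{B(z,\rho_z)}g_u\,d\mu\;\gtrsim\;(\rho_z/r)^{1-t},
\]
equivalently $\int_{B(z,\rho_z)}g_u\,d\mu\gtrsim r^{t-1}\,\mu(B(z,\rho_z))/\rho_z^t$. A Vitali $5$-covering of $\{B(z,\rho_z):z\in E\}$ yields a disjoint subfamily $\{B(z_i,\rho_i)\}$ whose $5$-fold enlargements cover $E$; combining with the hypothesis on $\Ha^{-t}_r(E)$ and doubling gives
\[
\frac{\lambda\mu(B)}{r^t}\;\le\;\Ha^{-t}_r(E)\;\lesssim\;\sum_i\frac{\mu(B(z_i,5\rho_i))}{(5\rho_i)^t}\;\lesssim\;\sum_i\frac{\mu(B(z_i,\rho_i))}{\rho_i^t}\;\lesssim\;r^{1-t}\sum_i\int_{B(z_i,\rho_i)}g_u\,d\mu,
\]
and disjointness collapses the last sum to $r^{1-t}\int_{B(x,2r)}g_u\,d\mu$, giving the desired lower bound up to the enlarged ball.

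The main obstacle is the capacity estimate: since $Ig_u(z)$ is an infinite sum, one cannot simply pick the largest single scale, and the weighted pigeonhole with geometric decay rate $1-t$ is essential — it is the sole place where the hypothesis $t<1$ is used decisively. A secondary technicality is that the selected balls $B(z_i,\rho_i)$ may protrude from $B(x,r)$, and the enlargements $5B_i$ may have radius exceeding $r$; both are standard nuisances, handled either by applying the argument to a slightly shrunken sub-ball and using doubling, or by separately treating indices with $\rho_i$ close to $r$ (where the content estimate is trivial) and absorbing the overlap constants into $C$. The geodesic hypothesis is used only to avoid the dilation factor in the Poincaré inequality in the chain step, which keeps the telescoping sum clean and avoids having to track expanded balls $\lambda^k B_k(z)$.
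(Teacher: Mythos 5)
Your overall strategy (telescoping with the $(1,1)$-Poincar\'e inequality, a pigeonhole over scales using the summability of $2^{-k(1-t)}$, then a $5$-covering argument against the codimension-$t$ content) is the same as the paper's, but there is a genuine gap in how you handle the geometry of the chains, and it is not the ``secondary technicality'' you claim it is. Because your chains $B_k(z)=B(z,2^{-k}r)$ are centered at $z$, which may lie arbitrarily close to $\partial B(x,r)$, both the boundary term $r\fint_{3B}g_u\,d\mu$ in your dichotomy and the selected balls $B(z_i,\rho_i)$ protrude from $B(x,r)$; as you yourself note, your covering step only yields $\lambda\mu(B)/r\lesssim\int_{B(x,2r)}g_u\,d\mu$, and in the dichotomy branch the correct conclusion is $\int_{3B}g_u\,d\mu\gtrsim\mu(B)/r$, not $\int_Bg_u\,d\mu\gtrsim\mu(B)/r$ as you wrote. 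Since $g_u$ could a priori be concentrated in $B(x,2r)\setminus B(x,r)$, this is a weaker statement than the lemma. Neither of your proposed repairs closes the gap: the hypothesis ties the contents of $E$ and $F$ to the fixed ball $B(x,r)$, and these sets need not meet any shrunken sub-ball, so ``applying the argument to a slightly smaller ball'' is not available; and treating the indices with $\rho_i$ close to $r$ separately still only produces lower bounds for integrals over balls that stick out of $B(x,r)$. Moreover, shrinking the initial chain radius cannot help, since for $z$ near $\partial B(x,r)$ any ball centered at $z$ of scale comparable to $r$ leaves $B(x,r)$.

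The missing idea is precisely the paper's chain construction: when $d(x,z)>r/4$, the chain is built along a geodesic $[x,z]$, starting from $B_0=B(x,r)$ and with radii $d(x,z)/2^k$ centered at points $x_k\in[x,z]$ chosen so that $z\in 3B_k$ and $B_{k+1}\subset 2B_k\subset B(x,r)$ for every $k$; only when $d(x,z)\le r/4$ does one use balls centered at $z$. This keeps all the balls on which the Poincar\'e inequality is invoked, and all the balls fed into the $5$-covering argument, inside $B(x,r)$, so that disjointness of $\{2B_i\}$ collapses the sum to $\int_{B(x,r)}g_u\,d\mu$ exactly as the statement requires. Note that this is a second, essential use of the geodesic hypothesis, beyond taking the dilation constant $\lambda=1$ in the Poincar\'e inequality, contrary to your closing remark that geodesicity is used ``only'' for the dilation factor. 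If you incorporate this center-to-point chaining (or otherwise prove that the $B(x,2r)$-version self-improves to $B(x,r)$, which you have not done), the rest of your argument goes through.
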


\begin{proof}
    By truncation, we may assume that $0\le u\le 1$.  If $u_{B(x,r)}\le 1/2$, then for all $z\in E$, we have that $|u(z)-u_{B(x,r)}|\ge 1/2$. Likewise, if $u_{B(x,r)}>1/2$, then $|u(w)-u_{B(x,r)}|\ge 1/2$ for all $w\in F$.  We consider the first case, with the proof of the second case following analogously.

    Let $z\in E$, and consider a geodesic $[x,z]$ connecting $x$ to $z$.  If $d(x,z)>r/4$, then let $B_0:=B(x,r)$,  $B_1:=B(x,d(x,z)/2)$, and let $B_2:=B(x_2,d(x,z)/2^2)$, where $x_2$ is a point in $[x,z]$ such that $d(x,x_2)=d(x,z)/2$. 
 Inductively, if $B_k:=B(x_k,d(x,z)/2^k)$ is such that $x_k\in [x,z]$ with $d(x_k,x_{k-1})=d(x,z)/2^{k-1}$, then we choose a point $x_{k+1}\in[x,z]$ such that $d(x_{k+1},x_k)=d(x,z)/2^k$ and define $B_{k+1}:=B(x_{k+1},d(x,z)/2^{k+1})$.  If $d(x,z)\le r/4$, then set $B_0=B(x,r)$, and for each $k\in\N$, let $B_k=B(z,2^{-k}r)$.  In either case, we obtain a chain of balls $\{B_k\}_k$ such that for all $k\in\N$, we have that $z\in 3B_k$ and $B_{k+1}\subset 2B_k\subset B(x,r)$.

 Since $z$ is an $L^1(\mu)$-Lebesgue point of $u$, we have by the doubling property of $\mu$ and the $(1,1)$-Poincar\'e inequality, that 
 \begin{align*}
     \frac{1}{2}\le |u(z)-u_{B(x,r)}|\le\sum_{k=0}^\infty|u_{B_{k+1}}-u_{B_k}|\lesssim\sum_{k=0}^\infty\fint_{2B_k}|u-u_{2B_k}|d\mu\lesssim\sum_{k=0}^\infty 2^{-k}r\fint_{2B_k}g_ud\mu.
 \end{align*}
Note that the dilation constant in the $(1,1)$-Poincar\'e inequality is taken to be 1, since $X$ is assumed to be geodesic. Since the series $\sum_{k=0}^\infty 2^{-k(1-t)}$ converges, there exists $k_z\in\N\cup\{0\}$ such that 
 \[
 2^{-k_z(1-t)}\lesssim 2^{-k_z}r\fint_{2B_{k_z}}g_ud\mu.
 \]
 We then have that 
 \[
 \mu(B_{k_z})\lesssim 2^{-k_zt}r\int_{2B_{k_z}}g_ud\mu\lesssim r^{1-t}\rad(B_{k_z})^t\int_{2B_{k_z}}g_ud\mu.
 \]
 Now, $\{3B_{k_z}\}_{z\in E}$ covers $E$, and the balls in this collection have uniformly bounded radii. Thus, by the 5-covering lemma, see \cite[Chapter~3]{HKST}, there exists a pairwise disjoint subcollection $\{3B_i\}_{i\in I\subset\N}$ such that $\{15B_i\}_{i\in I}$ covers $E$.   Furthermore, for each $i\in I$, there exists a countable collection of balls $\{B_{i,j}\}_{j\in I(i)}$ which covers $15 B_i$ and has bounded overlap, such that $B_{i,j}\subset 30 B_i$ and $\rad(B_{i,j})=\rad(B_i)\le r$ for each $j\in I(i)$, see \cite[Lemma~2.6]{EGKS}.  Hence, by the doubling property of $\mu$ and the previous inequality, we have that
 \begin{align*}
     \Ha^{-t}_r(E)&\le\sum_{i\in I}\sum_{j\in I(i)}\frac{\mu(B_{i,j})}{\rad(B_{i,j})^t}\\
    &\lesssim\sum_{i\in I}\frac{\mu(30B_i)}{\rad (B_i)^t}\lesssim\sum_{i\in I}\frac{\mu(B_i)}{\rad(B_i)^t}\lesssim\sum_{i\in I}r^{1-t}\int_{2B_i}g_ud\mu\le r^{1-t}\int_{B(x,r)}g_ud\mu.
 \end{align*}
 In the last inequality, we have used the fact that $2B_i\subset B(x,r)$ and disjointness of $\{2B_i\}_{i\in I}$.  By hypothesis, it then follows that
 \[ \lambda\frac{\mu(B(x,r))}{r}\le\frac{1}{r^{1-t}}\Ha^{-t}_r(E)\lesssim\int_{B(x,r)}g_ud\mu,
 \]
 which completes the proof.
 \end{proof}

We now prove Theorem~\ref{thm:CodimHausZero}.

\begin{proof}[Proof of Theorem~\ref{thm:CodimHausZero}]
     Fix $0<s<1$ and let $E\subset Z$ be such that $\chi_E\in B^s_{1,1}(Z,\nu)$.  Let $(X,d_X)$ be the hyperbolic filling of $(Z,d,\nu)$, constructed with parameters $\alpha$, $\tau>1$.  Let $\eps=\log\alpha$, and choose $\beta>0$ such that $\beta/\eps=1-s$.  Consider the metric $d_\eps$ as given by \eqref{eq:Unif Metric}, and let $\overline X_\eps$ be the completion with respect to $d_\eps$ of the uniformized hyperbolic filling $(X,d_\eps,\mu_\beta)$ of $(Z,d,\nu)$, as constructed in Section~\ref{sec:Hyperbolic Filling}.  Let $u:=u_{\chi_E}\in N^{1,1}(\overline X_\eps,\mu_\beta)$ be the extension of $\chi_E$ given by Theorem~\ref{thm:HypFillThm}.  By Theorem~\ref{thm:HypFillThm}, we have that $u(z)=\chi_E(z)$ for $\nu$-a.e.\ $z\in Z$, and $\nu$-a.e.\ $z\in Z$ is an $L^1(\mu_\beta)$-Lebesgue point of $u$.  Let $E_1:=\{u=1\}\cap Z\setminus N_u$ and $E_0:=\{u=0\}\cap Z\setminus N_u$, where $N_u$ is the set of points which are not $L^1(\mu_\beta)$-Lebesgue points of $u$.   

    Fix $\delta>0$ and $\eta>0$.  For each $k\in\N$ let $\partial^*_k E$ denote the set of points $z\in Z$ such that 
    \[
    \limsup_{r\to 0}\frac{\nu(B_Z(z,r)\cap E)}{\nu(B_Z(z,r))}\ge 2^{-k}\quad\text{ and }\quad\limsup_{r\to 0}\frac{\nu(B_Z(z,r)\setminus E)}{\nu(B_Z(z,r))}\ge 2^{-k}.
    \]
    Recall that we use the notation $B_Z$ to indicate that these are balls with respect to $(Z,d,\nu)$. 
 Letting $N_\delta\in\N$ be the smallest positive integer such that $1/N_\delta<\delta$.  We then have that 
    \begin{equation}\label{eq:Meas Theoretic Boundary Layers}    \partial^*E=\bigcup_{k=N_\delta}^\infty \partial^*_k E,
    \end{equation}
where $\partial^*E$ is the measure-theoretic boundary of $E$ with respect to $(Z,d,\nu)$.

    Fix $k\in\N$ with $k\ge N_\delta$. 
 We note, by the doubling property of $\nu$ and the codimensional relationship \eqref{eq:HypFillCodim} between $\nu$ and $\mu_\beta$, that for all $0<t<2\diam(X_\eps)$, we have  
 \[
 \mu_\beta(\{x\in\overline X_\eps:\dist(x,\partial_\eps X)\le t\})\lesssim t^{\beta/\eps}\nu(Z).
 \]
   Therefore, since $u\in N^{1,1}(\overline X_\eps,\mu_\beta)$, there exists $j_k\in\N$ such that 
    \begin{equation}\label{eq:EtaSmall}
    \int_{X_{j_k}}g_u\,d\mu_\beta\le 2^{-2k}\eta,
    \end{equation}
    where $X_{j_k}:=\{x\in\overline X_\eps:\dist(x,\partial_\eps X)\le 1/j_k\}$ and $g_u$ is the minimal $1$-weak upper gradient of $u$ in $\overline X_\eps$.  Recall from Section~\ref{sec:Hyperbolic Filling} that $(Z,d_\eps)$ is bi-Lipschitz equivalent to $(Z,d)$. 
 Therefore, there exists a constant $C_0\ge 1$ depending only on $\alpha$, $\tau$, and the doubling constant of $\nu$, such that for all $z\in Z$ and $r>0$, 
 \begin{equation}\label{eq:Engulf}
 B_Z(z,r)\subset B_\eps(z,C_0r)\cap Z.
 \end{equation}
 Since $u=\chi_E$ $\nu$-a.e.\ in $Z$ and $\nu$-a.e.\ point in $Z$ is an $L^1(\mu_\beta)$-Lebesgue point of $u$, it follows that for each $\zeta\in \partial^*_k E$, there exists $0<r_{\zeta,k}<\min\{1/(C_0j_k),\delta/(5C_0)\}$ such that 
    \begin{align*}
        \min\{\nu(B_Z(\zeta, r_{\zeta,k})\cap E_1),\nu(B_Z(\zeta,r_{\zeta,k})\cap E_0)\}&=\min\{\nu(B_Z(\zeta, r_{\zeta,k})\cap E),\nu(B_Z(\zeta,r_{\zeta,k})\setminus E)\}\\
        &\ge\frac{1}{2^k}\nu(B_Z(\zeta,r_{\zeta,k}))\simeq\frac{1}{2^k}\frac{\mu_\beta(B_\eps(\zeta,r_{\zeta,k}))}{r_{\zeta,k}^{1-s}}.
    \end{align*}
    Here we have used \eqref{eq:HypFillCodim} and the choice $\beta/\eps=1-s$ in the last inequality. By Lemma~\ref{lem:ContentCompare}, \eqref{eq:Engulf}, and the doubling property of $\mu_\beta$, we have that 
    \begin{align*}
    \min &\left\{\Ha^{-(1-s)}_{\mu_\beta,\,C_0r_{\zeta,k}}\left(B_\eps(\zeta, C_0r_{\zeta,k})\cap E_1\right),\Ha^{-(1-s)}_{\mu_\beta,\,C_0r_{\zeta,k}}(B_\eps(\zeta, C_0r_{\zeta,k})\cap E_0)\right\}\\
    &\gtrsim\min\{\nu(B_\eps(\zeta, C_0r_{\zeta,k})\cap E_1),\nu(B_\eps(\zeta,C_0r_{\zeta,k})\cap E_0)\}\\
    &\ge\min\{\nu(B_Z(\zeta, r_{\zeta,k})\cap E_1),\nu(B_Z(\zeta,r_{\zeta,k})\cap E_0)\}\gtrsim\frac{1}{2^k}\frac{\mu_\beta(B_\eps(\zeta,r_{\zeta,k}))}{r_{\zeta,k}^{1-s}}\simeq\frac{1}{2^k}\frac{\mu_\beta(B_\eps(\zeta,C_0r_{\zeta,k}))}{(C_0r_{\zeta,k})^{1-s}}.
    \end{align*}
    Note that $(\overline X_\eps,d_\eps,\mu_\beta)$ is geodesic, where $\mu_\beta$ is doubling and supports a $(1,1)$-Poincar\'e inequality, see Section~\ref{sec:Hyperbolic Filling}.  Hence, we apply Lemma~\ref{lem:Loewner} to obtain
    \[    \int_{B_\eps(\zeta,C_0r_{\zeta,k})}g_u\,d\mu_\beta\gtrsim\frac{1}{2^k}\frac{\mu_\beta(B_\eps(\zeta,C_0r_{\zeta,k}))}{C_0r_{\zeta,k}}. 
    \]
    Since $\{B_\eps(\zeta,C_0r_{\zeta,k})\}_{\zeta\in\partial^*_k E}$ covers $\partial^*_k E$, it follows from the 5-covering lemma that there exists a pairwise disjoint subcollection $\{B_i\}_{i\in I\subset\N}$ such that $\partial^*_k E\subset\bigcup_{i\in I} 5B_i$.  Since $r_{\zeta,k}<1/(C_0j_k)$, we have that $B_i\subset X_{j_k}$ for each $i$.  Then, by \eqref{eq:EtaSmall}, the doubling property of $\mu_\beta$, and by $r_{\zeta,k}<\delta/(5C_0)$, it follows that  
    \begin{align*}
        2^{-2k}\eta>\int_{X_{j_k}}g_u\,d\mu_\beta\ge\sum_{i\in I}\int_{B_i}g_u\,d\mu_\beta\gtrsim\frac{1}{2^k}\sum_{i\in I}\frac{\mu_\beta(B_i)}{\rad(B_i)}&\gtrsim\frac{1}{2^k}\sum_{i\in I}\frac{\mu_\beta(5B_i)}{\rad(5B_i)}\ge\frac{1}{2^k}\Ha^{-1}_{\mu_\beta,\,\delta}(\partial^*_k E).
    \end{align*}
    Hence, from \eqref{eq:Meas Theoretic Boundary Layers} we have that
    \[
    \Ha^{-1}_{\mu_\beta,\,\delta}(\partial^*E)\le\sum_{k=N_\delta}^\infty\Ha^{-1}_{\mu_\beta,\,\delta}(\partial^*_k E)\lesssim\eta.
    \]
    As $\delta>0$ and $\eta>0$ are arbitrary, we have from Lemma~\ref{lem:CodimComp} that 
    \[
    \Ha^{-s}_\nu(\partial^*E)=\Ha^{-1}_{\mu_\beta}(\partial^*E)=0.\qedhere
    \]
\end{proof}

Corollary~\ref{cor:Visintin Improvement} now follows immediately from Theorem~\ref{thm:MinkowskiSufficient} and Theorem~\ref{thm:CodimHausZero}.  As mentioned above, the first inequality in Corollary~\ref{cor:Visintin Improvement} was obtained in the Euclidean setting in \cite{V}.  While \eqref{eq:Codim Ineq} may hold with equality in the above corollary, as is the case with the von Koch snowflake domain, see \cite{Lom}, we provide an example in the next section showing that these inequalities may be strict.

\section{Examples}\label{sec:codim Examples}
In this section, we demonstrate some sharpness of the above results, and show by example that the converses of Theorem~\ref{thm:MinkowskiSufficient} and Theorem~\ref{thm:CodimHausZero} do not hold in general.  We do this by constructing a fat Cantor set as follows:

Let $I=[0,1]\subset\R$ be equipped with the Euclidean distance and Lebesgue measure.  For a parameter $0<a<1/3$, we remove an open interval $I_{1,1}$ of length $a$ from the center of $I$.  Let $C_{1,1}$ and $C_{1,2}$ be the disjoint closed intervals which comprise $I\setminus I_{1,1}$.  For the second stage of the construction, we remove an open interval $I_{2,k}$ of length $a^2$ from the center of each $C_{1,k}$, $k=1,2$.  Let $\{C_{2,k}\}_{k=1}^4$ be the remaining closed intervals.  Continuing inductively in this manner, for the $j$-th stage of the construction, we remove an open interval $I_{j,k}$ of length $a^j$ from the center of each $C_{j-1,k}$, $k=1,\dots,2^{j-1}$, and we let $\{C_{j,k}\}_{k=1}^{2^j}$ be the collection of remaining closed intervals.  Then for each $1\le k\le 2^j$, we have that 
\begin{equation}\label{eq:length C_{j,k}}
    \Ha^1(C_{j,k})=\frac{1-3a+a(2a)^j}{2^j(1-2a)}=:c_j.
\end{equation}
We then define the Cantor set $C_a$ by 
\[
C_a:=\bigcap_{j=1}^\infty\bigcup_{k=1}^{2^j}C_{j,k}=I\setminus\left(\bigcup_{j=1}^\infty\bigcup_{k=1}^{2^{j-1}}I_{j,k}\right).
\]
Since $0<a<1/3$, we see that 
\[
\Ha^1(C_a)=\frac{1-3a}{1-2a}>0.
\]

We now relate the Besov energy of the characteristic function of $C_a$ to the parameter $a$.
\begin{lem}\label{lem:Besov Cantor upper bound}
    Let $0<s<1$, $0<a<1/3$, and let $C_a$ be constructed as above.  Then
    \[
    \|\chi_{C_a}\|_{B^s_{1,1}(I)}\lesssim \sum_{j=1}^\infty (2a^{1-s})^j
    \]
    with the comparison constant depending only on $s$.
\end{lem}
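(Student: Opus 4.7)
The plan is to reduce the Besov energy to a standard double integral on $I$, decompose the integration region according to the removed intervals in the Cantor construction, and bound each piece by a one-variable calculation.

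First, since $I=[0,1]$ is equipped with Lebesgue measure, for every $x,y \in I$ with $x \ne y$ we have $\mu(B(x,|x-y|)) = |[x-|x-y|,x+|x-y|]\cap[0,1]|$, which is comparable to $|x-y|$ uniformly in $x,y$. Hence
\[
\|\chi_{C_a}\|_{B^s_{1,1}(I)} \simeq \int_I\int_I \frac{|\chi_{C_a}(x)-\chi_{C_a}(y)|}{|x-y|^{1+s}}\,dx\,dy = 2\int_{C_a}\int_{I\setminus C_a} \frac{dy\,dx}{|x-y|^{1+s}},
\]
where the equality uses that the integrand is $1$ precisely when exactly one of $x,y$ lies in $C_a$ and the symmetry $|x-y|^{-1-s}$. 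Writing $I\setminus C_a = \bigsqcup_{j\ge 1}\bigsqcup_{k=1}^{2^{j-1}} I_{j,k}$ as a disjoint union and applying Tonelli, the task reduces to estimating
\[
\sum_{j=1}^\infty\sum_{k=1}^{2^{j-1}} \int_{I_{j,k}}\int_{C_a} \frac{dx\,dy}{|x-y|^{1+s}}.
\]

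For a fixed removed interval $I_{j,k}=(b_{j,k},b_{j,k}+a^j)$, I would observe that $C_a \subset \mathbb{R}\setminus I_{j,k}$, so every $y\in I_{j,k}$ has distance at least $\delta(y):=\min(y-b_{j,k},\,b_{j,k}+a^j-y)$ from any point of $C_a$. Dominating the $x$-integral by the integral over the complement of $(y-\delta(y),y+\delta(y))$ in $\mathbb{R}$ gives
\[
\int_{C_a} \frac{dx}{|x-y|^{1+s}} \le \int_{|t|\ge \delta(y)} \frac{dt}{|t|^{1+s}} = \frac{2}{s}\,\delta(y)^{-s}.
\]
Integrating over $y\in I_{j,k}$ by the substitution $u=y-b_{j,k}$ yields $\int_{I_{j,k}}\delta(y)^{-s}\,dy = 2\int_0^{a^j/2} u^{-s}\,du = \frac{2^{s}}{1-s}\,a^{j(1-s)}$, so the contribution of $I_{j,k}$ is bounded by $\frac{2^{s+1}}{s(1-s)}\,a^{j(1-s)}$, with constant depending only on $s$.

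Finally, there are exactly $2^{j-1}$ removed intervals at level $j$, so summing over $k$ gives a bound of the form $C(s)\,2^{j-1}a^{j(1-s)} = \tfrac{C(s)}{2}\,(2a^{1-s})^j$ for level $j$, and summing over $j$ produces the series $\sum_{j=1}^\infty (2a^{1-s})^j$ claimed in the lemma. No genuine obstacle appears in this argument: it is a direct computation whose only subtlety is verifying that the constants absorbed into $\lesssim$ depend solely on $s$, which is transparent from the explicit bounds above (note in particular that no use of $a<1/3$ is required for this one-sided estimate; the geometric series need not even converge).
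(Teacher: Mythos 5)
Your proof is correct and takes essentially the same approach as the paper: both decompose the energy over the removed intervals $I_{j,k}$ and show that each contributes at most $C(s)\,a^{j(1-s)}$, the paper by bounding $\|\chi_{C_a}\|_{B^s_{1,1}(I)}=\|\chi_{I\setminus C_a}\|_{B^s_{1,1}(I)}\le\sum_{j,k}\|\chi_{I_{j,k}}\|_{B^s_{1,1}(I)}$ and evaluating an explicit iterated integral, you by the pointwise bound $\int_{C_a}|x-y|^{-1-s}\,dx\le \tfrac{2}{s}\delta(y)^{-s}$ — a cosmetic difference in the per-interval computation. Your remark that $a<1/3$ plays no role in this one-sided estimate is likewise consistent with the paper's argument, where that hypothesis is only needed for the lower bound in Proposition~\ref{thm:Cantor Besov energy}.
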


\begin{proof}
    We have that 
    \[
    \|\chi_{C_a}\|_{B^s_{1,1}(I)}=\|\chi_{I\setminus C_a}\|_{B^s_{1,1}(I)}=\Big\|\sum_{j=1}^\infty\sum_{k=1}^{2^{j-1}}\chi_{I_{j,k}}\Big\|_{B^s_{1,1}(I)}\le\sum_{j=1}^\infty\sum_{k=1}^{2^{j-1}}\|\chi_{I_{j,k}}\|_{B^s_{1,1}(I)},
    \]
    and for $1\le j<\infty$ and $1\le k\le 2^{j-1}$, 
    \[
    \|\chi_{I_{j,k}}\|_{B^s_{1,1}(I)}=2\int_{I_{j,k}}\int_{I\setminus I_{j,k}}\frac{1}{|y-x|^{1+s}}dydx.
    \]
    We have that 
    \begin{align*}
    \int_{I_{j,k}}\int_{I\setminus I_{j,k}}\frac{1}{|y-x|^{1+s}}dydx&\le2\int_0^{a^j}\int_{a^j}^1\frac{1}{|y-x|^{1+s}}dydx\\
        &=\frac{2}{s(1-s)}(a^{j(1-s)}+(1-a^j)^{1-s}-1)\le\frac{2a^{j(1-s)}}{s(1-s)},
    \end{align*}
    and so it follows that 
    \[
    \|\chi_{C_a}\|_{B^s_{1,1}(I)}\le\frac{2}{s(1-s)}\sum_{j=1}^\infty(2a^{1-s})^j.\qedhere
    \]
\end{proof}

\begin{prop}\label{thm:Cantor Besov energy}
Let $0<s<1$, and let $0<a<1/3$ be such that $a<1/2^{1+s}$.  Then
\[
\|\chi_{C_a}\|_{B^s_{1,1}(I)}\simeq\sum_{j=1}^\infty (2a^{1-s})^j,
\]
with comparison constant depending only on $s$ and $a$.
\end{prop}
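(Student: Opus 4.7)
Lemma~\ref{lem:Besov Cantor upper bound} already supplies the upper bound, so the task is to prove the matching lower bound $\|\chi_{C_a}\|_{B^s_{1,1}(I)} \gtrsim \sum_{j=1}^\infty (2a^{1-s})^j$. Because $\chi_{C_a}$ is $\{0,1\}$-valued and $I \setminus C_a = \bigsqcup_{j,k} I_{j,k}$, the integrand $|\chi_{C_a}(x)-\chi_{C_a}(y)|$ equals $1$ if and only if exactly one of $x,y$ lies in $C_a$.  Hence
\[
\|\chi_{C_a}\|_{B^s_{1,1}(I)} \simeq \int_{C_a}\int_{I\setminus C_a}\frac{dy\,dx}{|x-y|^{1+s}} = \sum_{j=1}^\infty\sum_{k=1}^{2^{j-1}} T_{j,k},\qquad T_{j,k}:=\int_{I_{j,k}}\int_{C_a}\frac{dy\,dx}{|x-y|^{1+s}},
\]
so it suffices to show $T_{j,k}\gtrsim a^{j(1-s)}$ uniformly in $k$, with constant depending on $s$ and $a$ alone; summing over the $2^{j-1}$ intervals at level $j$ and then over $j$ recovers $\sum_j (2a^{1-s})^j$.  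Note the crucial absence of cross-terms: the same expansion that in Lemma~\ref{lem:Besov Cantor upper bound} produced the triangle-inequality overestimate here collapses to the exact sum because distinct $I_{j,k}$'s contribute no mass to the absolute difference.

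The heart of the matter is a lower bound on the density of $C_a$ at the endpoints of $I_{j,k}$.  Write $I_{j,k}=(\alpha,\beta)$ with $\beta-\alpha=a^j$.  The endpoint $\alpha$ lies in $C_a$ and is the common right endpoint of a nested family $C_{j,k'_1}\supset C_{j+1,\,\cdot}\supset C_{j+2,\,\cdot}\supset\cdots$, where the $m$-th member has length $c_{j+m}$ and carries $C_a$-measure $\tfrac{1-3a}{2^{j+m}(1-2a)}$ by the self-similarity of the construction.  An elementary algebraic check using the explicit formula for $c_j$ reduces the inequality $a^j/2\le c_j$ to $(2a)^j(1-4a)\le 2(1-3a)$, which holds for all $0<a<1/3$ and $j\ge 1$ (by splitting into the cases $a\le 1/4$ and $1/4<a<1/3$).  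Choosing $m\ge 0$ with $c_{j+m+1}<a^j/2\le c_{j+m}$ and using the bound $c_{j+m}\le 2^{-(j+m)}$ (immediate from $1-3a+a(2a)^{j+m}\le 1-2a$), the containment $(\alpha-c_{j+m+1},\alpha)\subset(\alpha-a^j/2,\alpha)$ yields
\[
\Ha^1\bigl((\alpha-a^j/2,\alpha)\cap C_a\bigr)\ge \frac{1-3a}{2^{j+m+1}(1-2a)}\ge \frac{(1-3a)\,a^j}{4(1-2a)}.
\]

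For $x\in I_{j,k}$ and $y\in(\alpha-a^j/2,\alpha)\cap C_a$ the distance satisfies $|x-y|\le 3a^j/2$, so
\[
T_{j,k}\ge \Bigl(\tfrac{2}{3a^j}\Bigr)^{1+s}\cdot a^j\cdot\Ha^1\bigl((\alpha-a^j/2,\alpha)\cap C_a\bigr)\ \gtrsim\ a^{j(1-s)},
\]
with comparison constant depending only on $s$ and $a$.  Summing over $k$ and $j$ and combining with Lemma~\ref{lem:Besov Cantor upper bound} completes the proof.

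The principal obstacle is the density estimate; everything downstream is direct computation.  The hypothesis $a<1/2^{1+s}$ does not appear to be used in this particular direction (the argument applies for every $0<a<1/3$, with both sides of the comparison being $+\infty$ together when $2a^{1-s}\ge 1$), but it does keep $a$ quantitatively bounded away from $1/3$, so the prefactor $(1-3a)/(1-2a)$ is controlled.
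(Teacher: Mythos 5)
Your proposal is correct, and for the lower bound it takes a genuinely different route from the paper. The paper fixes a gap $I_{j,k}$ and writes $\int_{I_{j,k}}\int_{C_a}|x-y|^{-(1+s)}\,dy\,dx = J_1-J_2-J_3$, where $J_1-J_2$ is bounded below using the $a^j$-neighborhood of $I_{j,k}$ (all other gaps of generation $\le j$ lie at distance $>c_j>a^j$), and $J_3$, the interaction with all deeper gaps $I_{l,m}$, $l>j$, is controlled by a counting argument whose key bound $a^l/c_l^{1+s}\le (2^{1+s}a)^l(1-3a)^{-(1+s)}$ produces a geometric series that converges precisely because of the hypothesis $a<2^{-(1+s)}$; it then needs a separate (soft) argument for the finitely many small $j$ where the error term is not yet dominated. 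You avoid the subtraction and cross-term bookkeeping entirely: your quantitative density estimate $\Ha^1\bigl((\alpha-a^j/2,\alpha)\cap C_a\bigr)\ge \tfrac{(1-3a)}{4(1-2a)}a^j$ at the left endpoint of each gap (which I checked — the measure $\tfrac{1-3a}{2^{l}(1-2a)}$ of $C_a$ inside each level-$l$ construction interval, the inequality $a^j/2\le c_j$, and the pinching $a^j/2\le c_{j+m}\le 2^{-(j+m)}$ are all correct) gives $T_{j,k}\gtrsim a^{j(1-s)}$ uniformly in $j,k$, and summing recovers the series. This buys a cleaner argument and in fact a slightly stronger statement: your lower bound, hence the two-sided comparison, holds for every $0<a<1/3$, so the hypothesis $a<2^{-(1+s)}$ is revealed as an artifact of the paper's method of controlling $J_3$ rather than an intrinsic restriction; in the divergent regime $2a^{1-s}\ge1$ both sides are simultaneously infinite, which is exactly what the paper's later examples need.
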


\begin{proof}
    By Lemma~\ref{lem:Besov Cantor upper bound}, we have that 
    \[
    \|\chi_{C_a}\|_{B^s_{1,1}(I)}\lesssim\sum_{j=1}^\infty(2a^{1-s})^j.
    \]
    To prove the converse inequality, we have that 
    \begin{equation}\label{eq:Besov Cantor Lower Bound 1}
    \|\chi_{C_a}\|_{B^s_{1,1}(I)}=2\sum_{j=1}^\infty\sum_{k=1}^{2^{j-1}}\int_{I_{j,k}}\int_{C_a}\frac{1}{|y-x|^{1+s}}dydx,
    \end{equation}
    and for each $1\le j<\infty$ and $1\le k\le 2^{j-1}$,
    \begin{align*}
        \int_{I_{j,k}}\int_{C_a}\frac{1}{|y-x|^{1+s}}dydx=\int_{I_{j,k}}\int_{I\setminus I_{j,k}}\frac{1}{|y-x|^{1+s}}dydx-\sum_{l=1}^\infty\sum_{\substack{m=1,\\ (l,m)\ne(j,k)}}^{2^{l-1}}\int_{I_{j,k}}\int_{I_{l,m}}\frac{1}{|y-x|^{1+s}}dydx.
    \end{align*}
    Fix $1\le j<\infty$ and $1\le k\le 2^{j-1}$.  Setting 
    $    J_1:=\int_{I_{j,k}}\int_{I\setminus I_{j,k}}\frac{1}{|y-x|^{1+s}}dydx$, 
    and 
    \begin{align*}
    \sum_{l=1}^\infty\sum_{\substack{m=1,\\ (l,m)\ne(j,k)}}^{2^{l-1}}&\int_{I_{j,k}}\int_{I_{l,m}}\frac{1}{|y-x|^{1+s}}dydx\\
        &=\sum_{l=1}^j\sum_{\substack{m=1,\\ (l,m)\ne(j,k)}}^{2^{l-1}}\int_{I_{j,k}}\int_{I_{l,m}}\frac{1}{|y-x|^{1+s}}dydx+\sum_{l=j+1}^\infty\sum_{m=1}^{2^{l-1}}\int_{I_{j,k}}\int_{I_{l,m}}\frac{1}{|y-x|^{1+s}}dydx\\
        &=:J_2+J_3,
    \end{align*}
    we then have that 
    \begin{equation}\label{eq:J1 J2 J3}
        \int_{I_{j,k}}\int_{C_a}\frac{1}{|y-x|^{1+s}}dydx=J_1-J_2-J_3.
    \end{equation}

    For each $1\le l\le j$ and $1\le m\le 2^{l-1}$ such that $(l,m)\ne (j,k)$, it follows from the construction that there is at least one closed interval from the collection $\{C_{j,k}\}_{k=1}^{2^j}$ separating $I_{j,k}$ and $I_{l,m}$.  Hence 
    \[
    \dist (I_{j,k},I_{l,m})\ge c_j>a^j,
    \]
    since $0<a<1/3$, where $c_j$ is given by \eqref{eq:length C_{j,k}}.  Letting $I_{j,k}\pm a^j$ denote the $a^j$-neighborhood of the interval $I_{j,k}$, it then follows that 
    \begin{align}\label{eq:J_1-J_2}
        J_1-J_2&=\int_{I_{j,k}}\int_{I\setminus I_{j,k}}\frac{1}{|y-x|^{1+s}}dydx-\sum_{l=1}^j\sum_{\substack{m=1,\\ (l,m)\ne(j,k)}}^{2^{l-1}}\int_{I_{j,k}}\int_{I_{l,m}}\frac{1}{|y-x|^{1+s}}dydx\nonumber\\
        &\ge\int_{I_{j,k}}\int_{(I_{j,k}\pm a^j)\setminus I_{j,k}}\frac{1}{|y-x|^{1+s}}dydx\nonumber\\
        &=2\int_0^{a^j}\int_{a^j}^{2a^j}\frac{1}{|y-x|^{1+s}}dydx=\frac{4(1-2^{-s})}{s(1-s)} a^{j(1-s)}.
    \end{align}

    To estimate $J_3$, fix $l\ge j+1$ and consider the collection of intervals $\{I_{l,m}\}_{m=1}^{2^{l-1}}$. By the construction of $C_a$, it follows that for each interval $I_{l,m}$ in this collection, there exist some positive integer $1\le n_m\le 2^l$ such that there are precisely $n_m$ closed intervals from the collection $\{C_{l,i}\}_{i=1}^{2^l}$ between $I_{j,k}$ and $I_{l,m}$, and so 
    \[
    \dist (I_{j,k},I_{l,m})\ge n_mc_l.
    \]
    From this, we see that 
    \[
    \int_{I_{j,k}}\int_{I_{l,m}}\frac{1}{|y-x|^{1+s}}dydx\le\frac{a^ja^l}{(n_mc_l)^{1+s}}.
    \]
    Furthermore, for each $1\le n\le 2^l$, there are at most two intervals from the collection $\{I_{l,m}\}_{m=1}^{2^{l-1}}$ which are separated from $I_{j,k}$ by precisely $n$ closed intervals from $\{C_{l,i}\}_{i=1}^{2^l}$.  Hence, we overestimate $J_3$ as follows:
    \begin{align*}
        J_3&=\sum_{l=j+1}^\infty\sum_{m=1}^{2^{l-1}}\int_{I_{j,k}}\int_{I_{l,m}}\frac{1}{|y-x|^{1+s}}dydx\le 2\sum_{l=j+1}^\infty\sum_{n=1}^{2^l}\frac{a^ja^l}{(nc_l)^{1+s}}=2a^j\sum_{l=j+1}^\infty\frac{a^l}{c_l^{1+s}}\sum_{n=1}^{2^l}\frac{1}{n^{1+s}}
    \end{align*}
We have that 
\begin{align*}
    \frac{a^l}{c_l^{1+s}}=a^l\left(\frac{2^l(1-2a)}{1-3a+a(2a)^l}\right)^{1+s}\le(2^{1+s}a)^l\frac{1}{(1-3a)^{1+s}},
\end{align*}
and so by the assumption that $a<\min\{1/3, 1/2^{1+s}\}$, it follows that 
\[
C_{s,a}:=\sum_{l=1}^\infty \frac{a_l}{c_l^{1+s}}\sum_{n=1}^\infty\frac{1}{n^{1+s}}<\infty.
\]
Therefore, we have that 
\begin{align}\label{eq:J3}
    J_3\le 2C_{s,a}a^{j}.
\end{align}

Combining \eqref{eq:J1 J2 J3}, \eqref{eq:J_1-J_2}, and \eqref{eq:J3}, we have that 
\[
\int_{I_{j,k}}\int_{C_a}\frac{1}{|y-x|^{1+s}}dydx\ge\left(\frac{4(1-2^{-s})}{s(1-s)}-2C_{s,a}a^{js}\right)a^{j(1-s)},
\]
and so there exists $N\in\N$, depending on $s$ and $a$ such that 
\[
\int_{I_{j,k}}\int_{C_a}\frac{1}{|y-x|^{1+s}}dydx\ge\frac{2(1-s^{-s})}{s(1-s)}a^{j(1-s)}=:C_sa^{j(1-s)}
\]
for all $j>N$.  By \eqref{eq:Besov Cantor Lower Bound 1}, we then have that 
\begin{align*}
    \|\chi_{C_a}\|_{B^s_{1,1}(I)}\ge 2\sum_{j=1}^N\sum_{k=1}^{2^{j-1}}\int_{I_{j,k}}\int_{C_a}\frac{1}{|y-x|^{1+s}}dydx+C_s\sum_{j=N+1}^\infty(2a^{1-s})^j.
\end{align*}
For each $1\le j\le N$ and $1\le k\le 2^{j-1}$, there exists $C_{j,k}>0$ such that 
\[
2\int_{I_{j,k}}\int_{C_a}\frac{1}{|y-x|^{1+s}}dydx\ge C_{j,k}a^{j(1-s)}.
\]
Such a $C_{j,k}$ exists since $\Ha^1(C_a)>0$.  Therefore, letting 
\[
C:=\min\{C_s,C_{j,k}:1\le j\le N,\,1\le k\le 2^{j-1}\},
\]
which depends only on $s$ and $a$, we have that
\[
\|\chi_{C_a}\|_{B^s_{1,1}(I)}\ge C\sum_{j=1}^\infty(2a^{1-s})^j.\qedhere
\] 
\end{proof}

By using Proposition~\ref{thm:Cantor Besov energy}, we can now provide examples illustrating the sharpness of Theorem~\ref{thm:CodimHausZero}, and that the converses of Theorem~\ref{thm:CodimHausZero} and Theorem~\ref{thm:MinkowskiSufficient} do not hold in general.

\begin{example}\label{ex:Haus Sharp}
    For each $0<a<1/3$, consider the Cantor set $C_a$ constructed above.   The regularized boundary of $C_a$ is given by 
    \[    
    \partial^+C_a=C_a.
    \] 
    Then,
    \[
    \Ha^1(\partial^+ C_a)=\Ha^1(C_a)=\frac{1-3a}{1-2a}>0,
    \]
    and so $\Ha^{-s}(\partial^+C_a)=\infty$ for all $0<s<1$.  However, by Proposition~\ref{thm:Cantor Besov energy}, we can choose $0<a<1/3$ and $0<s<1$ such that $\chi_{C_a}\in B^s_{1,1}(I)$.  This shows that Theorem~\ref{thm:CodimHausZero} is sharp in the sense that we cannot strengthen the conclusion by replacing $\partial^*E$ with $\partial^+E$, or even replacing $\Ha^{-s}(\partial^*E)=0$ with $\Ha^{-s}(\partial^+E)<\infty$.
\end{example}

\begin{example}\label{ex:Haus Converse}
For each $0<a<1/3$, consider the Cantor set $C_a$, and let $\{x_i\}_{i\in\N}$ be the endpoints of the open intervals $I_{j,k}$ considered above.  Since $\Ha^1(C_a)>0$, we see from the construction that 
\[
\limsup_{r\to 0^+}\frac{\Ha^1(B(x,r)\setminus C_a)}{\Ha^1(B(x,r))}=0
\]
for all $x\in C_a\setminus\{x_i\}_{i\in\N}.$  Therefore, $\partial^*C_a=\{x_i\}_{i\in\N}$, and as a countable set, it follows that 
\[
\Ha^{-s}(\partial^*C_a)=0
\]
for all $0<s<1$.  

Now, fix $0<a_0<1/4$.  We can then choose $0<s_0<1$ sufficiently close to $1$ so that $2a_0^{1-s_0}>1$.  Since $a_0^{1/(1+s_0)}<a_0^{1/2}<1/2$, the hypotheses of Proposition~\ref{thm:Cantor Besov energy} are satisfied, and so $\chi_{C_{a_0}}\not\in B^{s_0}_{1,1}(I)$.  However, by the above discussion, we have that $\Ha^{-s_0}(\partial^* C_{a_0})=0$. Thus the converse of Theorem~\ref{thm:CodimHausZero} does not hold in general.   
\end{example}

\begin{example}\label{ex:Mink Converse}
    Let $0<a<1/3$ and $0<t<1$.  For $r>0$, consider a countable cover $\{B(y_j,r)\}_{j\in\N}$ of $\partial^*C_a=\{x_i\}_{i\in\N}$, such that $y_j\in\partial^*C_a$ for each $j\in\N$.  If $z_1\in C_a\setminus \bigcup_j B(y_j,r)$, then by the density of $\partial^*C_a$ in $C_a$, we have that $\dist(z_1,\bigcup_jB(y_j,r))=0$.  Hence if $z_2\in C_a\setminus \bigcup_jB(y_j,r)$ such that $z_1\ne z_2$, then $d(z_1,z_2)\ge 2r$.  Therefore $C_a\setminus\bigcup_jB(y_j,r)$ is a finite set.  Thus, we have that 
    \[
    \sum_j r^{-t}\Ha^1(B(y_j,r))\ge r^{-t}\Ha^1(C_a),
    \]
    and so since $\{B(y_j,r)\}_{j\in\N}$ is arbitrary, we have that $\M^{-t}_r(\partial^*C_a)\ge r^{-t}\Ha^1(C_a)$.  Since $\Ha^1(C_a)>0$, it follows that $\underline\M^{-t}(\partial^*C_a)=\infty$.  However by Proposition~\ref{thm:Cantor Besov energy}, we can choose $0<a<1/3$ and $0<s<1/2$ so that $\chi_{C_a}\in B^s_{1,1}(I)$.  Therefore, the converse to Theorem~\ref{thm:MinkowskiSufficient} does not hold in general for any $0<t<1$, nor even with the weaker conclusion that $\underline\M^{-t}(\partial^*E)<\infty$. 
\end{example}

\begin{example}\label{ex:Codim Strict}
    It was shown in \cite{Lom} that the von Koch snowflake domain $K\subset\R^2$ is such that 
    \[    \overline\codim_\M(\partial^+K)=\codim_F(K)=\codim_\Ha(\partial^*K)=2-\log4/\log3.
    \]
    That is, \eqref{eq:Codim Ineq} holds with equality in some cases.  However, the fat Cantor sets constructed above show that these inequalities may be strict.  For example, let $a=1/4$ and consider $C_a$. By Proposition~\ref{thm:Cantor Besov energy}, we see that $\chi_{C_a}\in B^s_{1,1}(I)$ if and only if $0<s<1/2$, and so $\codim_F(C_a)=1/2$.  However, $\Ha^{-s}(\partial^*C_a)=0$ and $\overline\M^{-s}(\partial^+C_a)=\infty$ for all $0<s<1$.  Hence, we have that 
    \[
    0=\overline\codim_\M(\partial^+C_a)<\codim_F(C_a)<\codim_\Ha(\partial^*C_a)=1.
    \]
\end{example}

\section{Existence of minimizers of $\J_\Omega^s$}\label{sec:Existence of Minimizers}
Throughout this section, we assume that $(X,d,\mu)$ is a  complete metric measure space, with $\mu$ a doubling Borel regular measure.  Let $\Omega\subset X$ be a bounded domain, that is, a bounded connected open set.  Without loss of generality, we may assume that $X\setminus\Omega\ne\varnothing$.  Otherwise, the existence results of this section hold trivially.  Let $F\subset X$ be a measurable set such that $F\setminus\Omega\ne\varnothing$, and let $0<s<1$.  We consider the functional 
\[
\J_\Omega^s(E):=L_s(E\cap\Omega,X\setminus E)+L_s(E\setminus\Omega,\Omega\setminus E)
\]
and the associated minimization problem defined in Section~\ref{sec:Fractional Perimeter} and Definition~\ref{def:Fractional Minimizer}, and prove existence of minimizers of $\J_\Omega^s$ by the direct method of calculus of variations.  We first note that $\J_\Omega^s$ is lower semicontinuous with respect to $L^1$-convergence by Fatou's lemma, see \cite[Proposition~3.1]{CRS}.  

\begin{prop}\label{prop:Nonlocal Lower Semicontinuity}
    If $E_k$ and $E$ are subsets of $X$ such that $\chi_{E_k}\to\chi_E$ in $L^1_\loc(X)$ as $k\to\infty$, then 
    \[    \J_\Omega^s(E)\le\liminf_{k\to\infty}\J_\Omega^s(E_k).
    \]
\end{prop}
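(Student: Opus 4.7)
The plan is to rewrite each of the two double integrals appearing in $\J_\Omega^s$ as the integral of a nonnegative integrand depending on $\chi_E$ and $\chi_\Omega$ multiplied by a fixed nonnegative Borel kernel, and then to apply Fatou's lemma after passing to a subsequence where $\chi_{E_k}$ converges pointwise almost everywhere. Concretely, setting
\[
K(x,y):=\frac{1}{d(x,y)^s\mu(B(x,d(x,y)))},
\]
one has
\[
L_s(E\cap\Omega,X\setminus E)=\int_X\int_X\chi_\Omega(x)\chi_E(x)\bigl(1-\chi_E(y)\bigr)K(x,y)\,d\mu(y)\,d\mu(x),
\]
with an analogous identity for $L_s(E\setminus\Omega,\Omega\setminus E)$. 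These integrands are nonnegative and continuous functions of $(\chi_E(x),\chi_E(y))\in\{0,1\}^2$, so if $\chi_{E_k}\to\chi_E$ pointwise $\mu\otimes\mu$-a.e.\ on $X\times X$, the integrands converge pointwise $\mu\otimes\mu$-a.e.\ to the integrands associated with $E$.

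First I would extract, from any given sequence $E_k$ with $\chi_{E_k}\to\chi_E$ in $L^1_\loc(X)$, a subsequence (not relabeled) such that $\J_\Omega^s(E_k)\to\liminf_{j\to\infty}\J_\Omega^s(E_j)$. Since $(X,d,\mu)$ is doubling, it is separable, so $X$ can be written as a countable union of balls on each of which $L^1_\loc$-convergence gives $L^1$-convergence. A standard diagonal extraction then yields a further subsequence along which $\chi_{E_k}(x)\to\chi_E(x)$ for $\mu$-a.e.\ $x\in X$. By Tonelli, the set $\{(x,y)\in X\times X:\chi_{E_k}(x)\to\chi_E(x)\text{ and }\chi_{E_k}(y)\to\chi_E(y)\}$ has full $\mu\otimes\mu$-measure, which is exactly what is needed for pointwise a.e.\ convergence of the integrands.

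Next I would apply Fatou's lemma to each of the two nonnegative double integrals, obtaining
\[
L_s(E\cap\Omega,X\setminus E)\le\liminf_{k\to\infty}L_s(E_k\cap\Omega,X\setminus E_k)
\]
and the analogous inequality with $L_s(E_k\setminus\Omega,\Omega\setminus E_k)$. Summing the two inequalities and using $\liminf a_k+\liminf b_k\le\liminf(a_k+b_k)$ for nonnegative sequences gives $\J_\Omega^s(E)\le\liminf_k\J_\Omega^s(E_k)$ along the extracted subsequence, which by the initial choice of subsequence coincides with the liminf along the original sequence.

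The calculation is essentially routine Fatou once the setup is in place; the only mild care required is the diagonal extraction to promote $L^1_\loc$-convergence to $\mu\otimes\mu$-a.e.\ convergence on $X\times X$. No use of the domain structure of $\Omega$ or of any Poincar\'e inequality is needed, and the argument is insensitive to whether individual terms $L_s(E_k\cap\Omega,X\setminus E_k)$ are finite, since Fatou's lemma applies to nonnegative integrands without integrability assumptions.
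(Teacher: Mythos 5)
Your argument is correct and follows essentially the same route the paper takes: the paper proves this proposition by simply invoking Fatou's lemma (citing \cite[Proposition~3.1]{CRS}), and your write-up just supplies the routine details (subsequence realizing the liminf, diagonal extraction to upgrade $L^1_\loc$-convergence to $\mu\otimes\mu$-a.e.\ convergence, Fatou applied to each nonnegative term, superadditivity of liminf). Nothing further is needed.
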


The following compactness theorem is an adaption of \cite[Theorem~7.1]{Hitch}, proved there in $\R^n$.  To prove the result in the doubling metric measure space setting, we will use the discrete convolution technique, obtained from the Lipschitz partition of unity given by Lemmas~\ref{lem:BoundedOverlapCover} and \ref{lem:Lipschitz POU}.   

\begin{thm}\label{thm:BesovCompactness}
    Let $\Omega\subset X$ be an open set.  Let $\F$ be a family of functions on $\Omega$ such that for all compact sets $K\subset\Omega$,
    \begin{equation}\label{eq:L1 bound}    \sup_{f\in\F}\|f\|_{L^1(K)}<\infty,
    \end{equation}
    and also that 
    \begin{equation}\label{eq:BesovEnergyBound}
    \sup_{f\in\F}\|f\|_{B^s_{1,1}(K)}<\infty.
    \end{equation}
    Then, for any sequence $\{f_k\}_k\subset\F$, there exists $f\in L^1_\loc(\Omega)$ and a subsequence, not relabeled, so that $f_k\to f$ in $L^1_\loc(\Omega)$ and pointwise $\mu$-a.e.\ in $\Omega$.
\end{thm}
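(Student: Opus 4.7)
The plan is to adapt the standard Fr\'echet--Kolmogorov compactness scheme to the metric Besov setting via the discrete convolution machinery of Lemmas~\ref{lem:BoundedOverlapCover} and~\ref{lem:Lipschitz POU}. For $\eps>0$, associate to each $f\in L^1_\loc(\Omega)$ its discrete convolution $f_\eps = \sum_i f_{B_i}\phii_i^\eps$, where $\{B_i\}_i$ is an $\eps$-scale cover with the bounded overlap property and $\{\phii_i^\eps\}_i$ is the subordinate Lipschitz partition of unity. The crucial quantitative ingredient is the uniform approximation
\[
\|f-f_\eps\|_{L^1(K)}\lesssim\eps^s\,\|f\|_{B^s_{1,1}(K^\eps)},
\]
valid for any compact $K\subset\Omega$ and $\eps$ small enough that the thickening $K^\eps:=\{x:\dist(x,K)\le C\eps\}$ remains in $\Omega$.

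To establish this approximation, I would use that $\sum_i\phii_i^\eps\equiv 1$ to write $|f(x)-f_\eps(x)|\le\sum_{i:\,x\in 2B_i}|f(x)-f_{B_i}|$. For each such $i$, averaging and bounding $d(x,y)\lesssim\eps$ and $\mu(B(x,d(x,y)))\le C\mu(B_i)$ via doubling yields
\[
|f(x)-f_{B_i}|\le\fint_{B_i}|f(x)-f(y)|\,d\mu(y)\lesssim\eps^s\int_{B_i}\frac{|f(x)-f(y)|}{d(x,y)^s\mu(B(x,d(x,y)))}\,d\mu(y).
\]
Summing over the bounded overlap cover and integrating over $K$ gives the desired bound, after noting that $\bigcup_{i:x\in 2B_i}B_i\subset B(x,3\eps)$ keeps the integration confined to $K^\eps$.

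With the approximation estimate in hand, I would exhaust $\Omega$ by compact sets $K_1\Subset K_2\Subset\cdots$ with $\bigcup_j K_j=\Omega$. For each pair $(j,k)\in\N\times\N$, only finitely many balls of radius $1/k$ meet $K_j$, and the averages $|f_{B_i}|$ are uniformly bounded for $f\in\F$ thanks to the local $L^1$ hypothesis~\eqref{eq:L1 bound}. Hence, for each such $(j,k)$, the family $\{(f)_{1/k}|_{K_j}:f\in\F\}$ lies in a finite-dimensional bounded set and is precompact in $L^1(K_j)$. A standard diagonal extraction along a sequence $\{f_n\}\subset\F$ produces a subsequence, not relabeled, such that $\{(f_n)_{1/k}\}_n$ converges in $L^1(K_j)$ for every $(j,k)$.

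Finally, the three-term inequality
\[
\|f_n-f_m\|_{L^1(K_j)}\le\|f_n-(f_n)_{1/k}\|_{L^1(K_j)}+\|(f_n)_{1/k}-(f_m)_{1/k}\|_{L^1(K_j)}+\|(f_m)_{1/k}-f_m\|_{L^1(K_j)},
\]
together with the uniform bound on the outer terms (from the approximation estimate combined with~\eqref{eq:BesovEnergyBound}) and the convergence of the middle term for each fixed $k$, shows that $\{f_n\}$ is Cauchy in $L^1(K_j)$ for every $j$, hence convergent in $L^1_\loc(\Omega)$ to some $f$. Passing to a further subsequence yields pointwise $\mu$-a.e.\ convergence by the usual argument. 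The main obstacle is the approximation estimate: the asymmetry of the Besov kernel in the metric setting, together with the need to localize on $K^\eps\subset\Omega$, requires careful use of the doubling property and bounded overlap of the cover, replacing the translation-based mollifier estimates available in the Euclidean proof of \cite[Theorem~7.1]{Hitch}.
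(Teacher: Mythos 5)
Your argument is correct and follows essentially the same route as the paper: the same discrete convolution $f_\eps=\sum_i f_{B_i}\phii_i^\eps$ built from Lemmas~\ref{lem:BoundedOverlapCover} and~\ref{lem:Lipschitz POU}, the same key estimate $\|f-f_\eps\|_{L^1(K)}\lesssim\eps^s\|f\|_{B^s_{1,1}(K^\eps)}$ proved via doubling and bounded overlap, and the same exploitation of the fact that the discrete convolutions live in a finite-dimensional space with uniformly bounded coefficients. The only difference is organizational: you extract convergent subsequences of the discrete convolutions at each scale and close with a three-term Cauchy argument, whereas the paper packages the same finite-dimensionality into explicit $\eta$-nets for the coefficient vectors $R(f)$, proves total boundedness of $\F$ in $L^1$ of a compactly contained subset, and then runs the exhaustion-and-diagonal step; both are standard equivalent implementations of the Fr\'echet--Kolmogorov scheme.
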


\begin{proof}
    Let $\widetilde\Omega\Subset\Omega$, and let $K\subset\Omega$ be compact such that $\widetilde\Omega\subset K$. For each $0<\eps<\dist(K,X\setminus\Omega)/4$, there exists by Lemma~\ref{lem:BoundedOverlapCover}, a countable cover $\{B_i^\eps\}_{i\in I\subset\N}$ of $X$ by 
    balls of radius $\eps$ with bounded overlap, such that collection $\{\frac{1}{5}B_i^\eps\}_{i\in I}$ is pairwise disjoint.  Let $\{\phii_i^\eps\}_{i\in I}$ be a Lipschitz partition of unity subordinate to this cover, given by Lemma~\ref{lem:Lipschitz POU}. 
    
    Let $J:=\{i\in I: B_i^\eps\cap K\ne\varnothing\}$.  Since $K$ is bounded and since $\{\frac{1}{5}B_i^\eps\}_{i\in J}$ is pairwise disjoint, the doubling property of $\mu$ ensures that $J$ is a finite set. Thus, there exists $N_\eps\in\N$ so that, relabeling if necessary, we can denumerate this collection as follows:
    \[
    \{B_i^\eps\}_{i\in J}=\{B_i^\eps\}_{i=1}^{N_\eps}.
    \]
    By Lemma~\ref{lem:Lipschitz POU}, it follows that for each $x\in K$, 
    \begin{equation}\label{eq:POU on K}
        \sum_{i=1}^{N_\eps}\phii_i^\eps(x)=\sum_{i\in I}\phii_i^\eps(x)=1.
    \end{equation}
    By the above upper bound on $\eps$, we can choose a compact set $\wtil K\Subset\Omega$, independent of $\eps$, such that $\bigcup_{i=1}^{N_\eps}2B_i^\eps\subset\wtil K$.  For each $f\in\F$, we define the discrete convolution $f_\eps$ of $f$ by 
    \[    f_\eps=\sum_{i=1}^{N_\eps}f_{B_i^\eps}\phii_i^\eps.
    \]
    We also define the vector
    \[
    R(f)=\left(\int_{B_1^\eps}f\,d\mu,\dots,\int_{B_{N^\eps}^\eps}f\,d\mu\right)\in\R^{N_\eps}.
    \]
    By the bounded overlap of the cover $\{B_{i}^\eps\}_{i=1}^{N_\eps}$ and \eqref{eq:L1 bound}, we have that 
    \begin{align*}
        \|R(f)\|_{\ell^1}\le\sum_{i=1}^{N_\eps}\int_{B_{i}^\eps}|f|d\mu\lesssim\int_{\bigcup_{i=1}^{N_\eps}B^\eps_i}|f|d\mu\le\int_{\wtil K}|f|d\mu\le\sup_{f\in\F}\|f\|_{L^1(\wtil K)}<\infty.
    \end{align*}
    Therefore, we have that 
    $R(\F)$ is a totally bounded set in $(\R^{N_\eps},\|\cdot\|_{\ell^1})$, and so for every $\eta>0$, there exists $M_\eta\in\N$ and $b_1,\dots,b_{M_\eta}\in\R^{N_\eps}$ such that for every $f\in\F$, there exists $j\in\{1,\dots,M_\eta\}$ such that 
    \begin{equation}\label{eq:b_j}
    \|R(f)-b_j\|_1<\eta.
    \end{equation}

    Let $\eta:=\eps/(N_\eps+1)$.
    For each $j\in\{1,\dots,M_\eta\}$, let $b_j=(b_{1,j},\dots,b_{N_\eps,j})$, and define $\beta_j:\Omega\to\R$ by
    \[
    \beta_j=\sum_{i=1}^{N_\eps}\frac{b_{i,j}}{\mu(B_i^\eps)}\phii_i^\eps.
    \]
    For $f\in\F$, let $j\in\{1,\dots,M_\eta\}$ be such that \eqref{eq:b_j} holds.  We then have that
    \begin{equation}\label{eq:L^1(K) TI}
    \|f-\beta_j\|_{L^1(K)}\le\|f-f_\eps\|_{L^1(K)}+\|f_\eps-\beta_j\|_{L^1(K)}.
    \end{equation}
    By \eqref{eq:POU on K}, we have that 
    \begin{align*}
        \|f-f_\eps\|_{L^1(K)}\le\sum_{k=1}^{N_\eps}\int_{B_k^\eps\cap K}|f-f_\eps|d\mu\le\sum_{k=1}^{N_\eps}\sum_{i=1}^{N_\eps}\int_{B_k^\eps\cap K}|f(x)-f_{B_i^\eps}|\phii_i^\eps(x)d\mu(x).
    \end{align*}
    Letting $I_k:=\{i\in\{1,\dots, N_\eps\}:2B_i^\eps\cap B_k^\eps\ne\varnothing\}$, we have that 
    \[
    \|f-f_\eps\|_{L^1(K)}\le\sum_{k=1}^{N_\eps}\sum_{i\in I_k}\int_{B_k^\eps\cap K}|f-f_{B_i^\eps}|d\mu\le\sum_{k=1}^{N_\eps}\sum_{i\in I_k}\int_{B_k^\eps}\fint_{B_i^\eps}|f(x)-f(y)|d\mu(y)d\mu(x).
    \]
    For $x\in B_k^\eps$ and $i\in I_k$, we then have by the doubling property of $\mu$ that 
    \begin{align*}
        \fint_{B_i^\eps}|f(x)-f(y)|d\mu(y)&\lesssim\eps^s\frac{\mu(B(x,5\eps))}{\mu(B_i^\eps)}\int_{B_i^\eps}\frac{|f(y)-f(x)|}{d(x,y)^s\mu(B(x,d(x,y)))}d\mu(y)\\
        &\lesssim\eps^s\int_{B_i^\eps}\frac{|f(y)-f(x)|}{d(x,y)^s\mu(B(x,d(x,y)))}d\mu(y).
    \end{align*}
    Substituting this into the previous expression, we have by bounded overlap of $\{B_i^\eps\}_{i=1}^{N_\eps}$, 
    \begin{align}\label{eq:L^1(K) first piece}
        \|f-f_\eps\|_{L^1(K)}&\lesssim\eps^s\sum_{k=1}^{N_\eps}\sum_{i\in I_k}\int_{B_k^\eps}\int_{B_i^\eps}\frac{|f(y)-f(x)|}{d(x,y)^s\mu(B(x,d(x,y)))}d\mu(y)d\mu(x)\nonumber\\
        &\lesssim\eps^s\int_{\wtil K}\int_{\wtil K}\frac{|f(y)-f(x)|}{d(x,y)^s\mu(B(x,d(x,y)))}d\mu(y)d\mu(x)\nonumber\\
        &\le\eps^s\sup_{f\in\F}\|f\|_{B^s_{1,1}(\wtil K)}.
    \end{align}

    Since we have chosen $j\in\{1,\dots,M_\eta\}$ so that \eqref{eq:b_j} holds, we have from the doubling property of $\mu$ and choice of $\eta$ that
    \begin{align*}
        \|f_\eps-\beta_j\|_{L^1(K)}\le\sum_{k=1}^{N_\eps}\int_{B_k^\eps}|f_\eps-\beta_j|d\mu&\le\sum_{k=1}^{N_\eps}\int_{B_k^\eps}\sum_{i=1}^{N_\eps}|f_{B_i^\eps}-b_{i,j}/\mu(B_i^\eps)|\phii_i^\eps(x)d\mu(x)\\
        &\le\sum_{k=1}^{N_\eps}\int_{B_k^\eps}\sum_{i\in I_k}\frac{1}{\mu(B_i^\eps)}\left|\int_{B_i^\eps}f\,d\mu-b_{i,j}\right|d\mu(x)\\
        &\lesssim\sum_{k=1}^{N_\eps}\fint_{B_k^\eps}\sum_{i=1}^{N_\eps}\left|\int_{B_i^\eps}f\,d\mu-b_{i,j}\right|d\mu(x)\\
        &=\sum_{k=1}^{N_\eps}\fint_{B_k^\eps}\|R(f)-b_j\|_{\ell^1}\,d\mu<N_\eps\eta<\eps.
    \end{align*}
    Combining this estimate with \eqref{eq:L^1(K) first piece} and \eqref{eq:L^1(K) TI}, we have that 
    \[
    \|f-\beta_j\|_{L^1(\wtil\Omega)}\le\|f-\beta_j\|_{L^1(K)}\lesssim\eps^s\sup_{f\in\F}\|f\|_{B^s_{1,1}(\wtil K)}+\eps.
    \]
Recall that the compact set $\wtil K$ is chosen independently of $\eps>0$, and \[
C:=\sup_{f\in\F}\|f\|_{B^s_{1,1}(\wtil K)}<\infty
\]
by \eqref{eq:BesovEnergyBound}. 
 Therefore, we have shown that for all $\eps>0$, there exists $M_\eps\in\N$ and $\beta_1,\dots,\beta_{M_\eps}\in L^1(\wtil\Omega)$ such that for all $f\in\F$, there exists $j\in \{1,\dots, M_\eps\}$ such that 
 \[
 \|f-\beta_j\|_{L^1(\wtil\Omega)}\lesssim C\eps^s+\eps,
 \]
 with comparison constant depending only on $s$ and the doubling constant of $\mu$.
 That is, $\F$ is totally bounded, and hence precompact, in $L^1(\wtil\Omega)$, by completeness of $L^1(\wtil\Omega)$.

 Let $\{f_k\}_k\subset\F$.  Let $\Omega=\bigcup_{i=1}^\infty\Omega_i$ be an exhaustion of $\Omega$ by an increasing sequence of open sets $\Omega_i\Subset\Omega$.  Since $\F$ is precompact in $L^1(\Omega_1)$, there exists a subsequence $\{f_{1,k}\}_k\subset\{f_k\}_k$ and $g_1$ such that $f_{1,k}\to g_1$ in $L^1(\Omega_1)$ as $k\to\infty$.  We choose $k_1\in\N$ such that 
 \[
 \|f_{1,k_1}-g_1\|_{L^1(\Omega_1)}<2^{-1}.
 \]
 Inductively, for $i\ge 2$, there exists by precompactness of $\F$ on $\Omega_i$, a subsequence $\{f_{i,k}\}_k\subset\{f_{i-1,k}\}_{k=k_{i-1}}^\infty$ and a function $g_i$ such that $f_{i,k}\to g_i$ in $L^1(\Omega_i)$ as $k\to\infty$.  Furthermore, there exists $k_i\in\N$ such that
 \[
 \|f_{i,k_i}-g_i\|_{L^1(\Omega_i)}<2^{-i}.
 \]
 Since the subsequences are nested in this manner, we have that $g_i=g_{i-1}$ $\mu$-a.e.\ on $\Omega_{i-1}$.  We then define $f\in L^1_\loc(\Omega)$ by $f(x)=g_i(x)$ if $x\in\Omega_i$. We claim that $f_{i,k_i}\to f$ in $L^1_\loc(\Omega)$ and pointwise $\mu$-a.e.\ in $\Omega$.  Indeed, for $i_0\in\N$, $\{f_{i,k_i}\}_{i=i_0}^\infty$ is a subsequence of $\{f_{i_0,k}\}_k$, and  $f_{i_0,k}\to f$ in $L^1(\Omega_{i_0})$ as $k\to\infty$.  Moreover, by the monotone convergence theorem and our choice of $k_i$, we have that 
 \[
 \int_{\Omega_{i_0}}\sum_{i=i_0}^\infty|f_{i,k_i}-f|d\mu=\sum_{i=i_0}^\infty\int_{\Omega_{i_0}}|f_{i,k_i}-f|d\mu<\sum_{i=i_0}^\infty2^{-i}<\infty.
 \]
 Hence, $\sum_{i=i_0}^\infty|f_{i,k_i}-f|<\infty$ $\mu$-a.e.\ in $\Omega_{i_0}$, and so $f_{i,k_i}\to f$ pointwise $\mu$-a.e.\ in $\Omega_{i_0}$ as $i\to\infty$.  Since $i_0$ is arbitrary, and $\Omega=\bigcup_{i=1}^\infty\Omega_i$ with $\Omega_i\Subset\Omega$, this concludes the proof.
\end{proof}

We define the following collection of measurable sets:
\[
\A(F):=\{E\subset X:E\setminus\Omega=F\setminus\Omega,\,\J_\Omega^s(E)<\infty\}.
\]

\begin{thm}\label{thm:Existence of s-minimizers}
If $\A(F)\ne\varnothing$, then there exists a minimizer of $\J_\Omega^s$.
\end{thm}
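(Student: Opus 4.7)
The plan is to prove the existence of a minimizer via the direct method of the calculus of variations, using Theorem~\ref{thm:BesovCompactness} for compactness and Proposition~\ref{prop:Nonlocal Lower Semicontinuity} for lower semicontinuity.

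First, I fix a minimizing sequence $\{E_k\} \subset \A(F)$, so that $\J_\Omega^s(E_k) \to m := \inf_{E \in \A(F)} \J_\Omega^s(E) < \infty$. Without loss of generality I may assume $\sup_k \J_\Omega^s(E_k) \le m+1$. The goal is to produce a limit set $E$ with $E \setminus \Omega = F \setminus \Omega$ and $\J_\Omega^s(E) \le m$.

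Next I want to apply Theorem~\ref{thm:BesovCompactness} to the family $\F := \{\chi_{E_k}\}_k$ on the open set $\Omega$. The $L^1$ bound \eqref{eq:L1 bound} is immediate since $0 \le \chi_{E_k} \le 1$ and compact sets $K \subset \Omega$ have $\mu(K) < \infty$. For the Besov bound \eqref{eq:BesovEnergyBound}, given a compact $K \subset \Omega$, I split
\[
\|\chi_{E_k}\|_{B^s_{1,1}(K)} = L_s(E_k \cap K, K \setminus E_k) + L_s(K \setminus E_k, E_k \cap K).
\]
Since $K \subset \Omega$, I have $E_k \cap K \subset E_k \cap \Omega$ and $K \setminus E_k \subset X \setminus E_k$, so by monotonicity of $L_s$ and \eqref{eq:Ls(A,B)}, both terms are bounded by $(1 + C_\mu) L_s(E_k \cap \Omega, X \setminus E_k) \le (1+C_\mu) \J_\Omega^s(E_k) \le (1+C_\mu)(m+1)$, uniformly in $k$. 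Theorem~\ref{thm:BesovCompactness} then yields, after passing to a subsequence (not relabeled), a limit $f \in L^1_\loc(\Omega)$ such that $\chi_{E_k} \to f$ both in $L^1_\loc(\Omega)$ and pointwise $\mu$-a.e.\ in $\Omega$. The pointwise convergence forces $f \in \{0,1\}$ $\mu$-a.e., so $f = \chi_{E'}$ for some measurable $E' \subset \Omega$.

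I then define $E := E' \cup (F \setminus \Omega)$, so by construction $E \setminus \Omega = F \setminus \Omega$. Since $\chi_{E_k} = \chi_F$ on $X \setminus \Omega$ for every $k$, combining with the $L^1_\loc(\Omega)$ convergence on $\Omega$ gives $\chi_{E_k} \to \chi_E$ in $L^1_\loc(X)$ (any compact $K' \subset X$ splits into $K' \cap \Omega$, handled by the above, and $K' \setminus \Omega$, on which the functions already agree with $\chi_E$). Proposition~\ref{prop:Nonlocal Lower Semicontinuity} then yields
\[
\J_\Omega^s(E) \le \liminf_{k \to \infty} \J_\Omega^s(E_k) = m,
\]
so in particular $\J_\Omega^s(E) < \infty$, giving $E \in \A(F)$, and $E$ is the desired minimizer.

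The main obstacle is the uniform Besov-energy bound on compact subsets of $\Omega$: everything else is routine once Theorem~\ref{thm:BesovCompactness} applies. The point is that $\J_\Omega^s$ only measures interactions that touch $\Omega$, while $\|\chi_{E_k}\|_{B^s_{1,1}(K)}$ requires control of interactions \emph{inside} $K$. The argument above works precisely because $K \Subset \Omega$, so the relevant interactions are captured by $L_s(E_k \cap \Omega, X \setminus E_k)$, which is itself controlled by $\J_\Omega^s(E_k)$ via \eqref{eq:E and  E complement}-style estimates and the symmetry bound \eqref{eq:Ls(A,B)}. This also explains why compactness is only asserted locally in $\Omega$ and not globally in $X$: outside $\Omega$ the Besov energy of $\chi_{E_k}$ may genuinely be infinite, but there the sequence is constant and convergence is automatic.
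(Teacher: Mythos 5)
Your proposal is correct and follows essentially the same direct-method argument as the paper: a minimizing sequence, the uniform $B^s_{1,1}$ bound on (subsets of) $\Omega$ obtained from $\J_\Omega^s$ together with the quasi-symmetry \eqref{eq:Ls(A,B)}, compactness via Theorem~\ref{thm:BesovCompactness}, identification of the limit as a characteristic function, and lower semicontinuity via Proposition~\ref{prop:Nonlocal Lower Semicontinuity}; your verification of hypothesis \eqref{eq:BesovEnergyBound} on compact $K\subset\Omega$ is in fact a slightly more explicit version of the paper's assertion that $\sup_k\|\chi_{E_k}\|_{B^s_{1,1}(\Omega)}<\infty$. The one loose step is the claim that the $L^1_\loc(\Omega)$ convergence ``handles'' $K'\cap\Omega$ for an arbitrary compact $K'\subset X$: such a set need not be compactly contained in $\Omega$, so you should first upgrade to convergence in $L^1(\Omega)$ --- the paper does this with an exhaustion of the bounded set $\Omega$, and in your setup it follows at once from dominated convergence, since $|\chi_{E_k}-\chi_{E'}|\le 1$, $\mu(\Omega)<\infty$, and you already have pointwise $\mu$-a.e.\ convergence in $\Omega$.
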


\begin{proof}
    Since $\A(F)\ne\varnothing$, we have that $\beta:=\inf\{\J_\Omega^s(E):E\in\A(F)\}<\infty$.  Let $\{E_k\}_{k\in\N}$ be a sequence in $\A(F)$ such that $\J_\Omega^s(E_k)\to\beta$ as $k\to\infty$.  Hence we have that 
    \[
    \sup_k\left(\|\chi_{E_k}\|_{L^1(\Omega)}+\|\chi_{E_k}\|_{B^s_{1,1}(\Omega)}\right)<\infty
    \]
    as $\Omega$ is bounded,
    and so by Theorem~\ref{thm:BesovCompactness}, there exists $f\in L^1_\loc(\Omega)$ and a subsequence $\{E_k\}_k$, not relabeled, such that $\chi_{E_k}\to f$
    in $L^1_\loc(\Omega)$ and pointwise $\mu$-a.e.\ in $\Omega$ as $k\to\infty$.  Thus, we can take $f=\chi_{\wtil E}$ for some set $\wtil E\subset\Omega$.  
    
    Let $\Omega=\bigcup_{i=1}^\infty\Omega_i$ be an exhaustion of $\Omega$ by an increasing sequence of open sets $\Omega_i\Subset\Omega$, and let $\eps>0$. Since $\Omega$ is bounded, there exists $i\in\N$ such that $\mu(\Omega\setminus\Omega_i)<\eps$.  We then have that
    \[
    \int_\Omega|\chi_{E_k}-\chi_{\wtil E}|d\mu=\int_{\Omega_i}|\chi_{E_k}-f|d\mu+\int_{\Omega\setminus\Omega_i}|\chi_{E_k}-\chi_{\wtil E}|d\mu<\int_{\Omega_i}|\chi_{E_k}-f|d\mu+\eps.
    \]
    Since $\chi_{E_k}\to f$ in $L^1_\loc(\Omega)$, we have that $\chi_{E_k}\to\chi_{\wtil E}$ in $L^1(\Omega)$. Letting $E=\wtil E\cup(F\setminus\Omega)$, we then have that $\chi_{E_k}\to\chi_E$ in $L^1_\loc(X)$, as $\chi_E=\chi_{E_k}=\chi_F$ in $X\setminus\Omega$ for all $k$. By Proposition~\ref{prop:Nonlocal Lower Semicontinuity}, it follows that 
    \[    \J_\Omega^s(E)\le\liminf_{k\to\infty}\J_\Omega^s(E_k)=\beta<\infty.
    \]
    As $E\in\A(F)$, it follows that $E$ is a minimizer of $\J_\Omega^s$.
\end{proof}

\begin{remark}
    We note that Theorem~\ref{thm:MinkowskiSufficient} gives us a sufficient condition on $\Omega$ for existence of solutions.  Namely, if $(X,d,\mu)$ is a doubling metric measure space satisfying the LLC-1 condition, and if $\Omega\subset X$ is bounded and there exists $t>s$ such that $\overline\M^{-t}(\partial^+\Omega)<\infty$, then for any measurable set $F\subset X$ with $F\setminus\Omega\ne\varnothing$, we have that $\A(F)\ne\emptyset$.  Indeed, by Theorem~\ref{thm:MinkowskiSufficient}, 
    \[
    \J_\Omega^s(F\setminus\Omega)=L_s(F\setminus\Omega,\Omega)\le L_s(X\setminus\Omega,\Omega)\simeq\|\chi_\Omega\|_{B^s_{1,1}(X)}<\infty,
    \]
    and so $F\setminus\Omega\in\A(F)$.  We compare this to the existence result from \cite{CRS} in $\R^n$, where the assumption that $\Omega\subset\R^n$ is a bounded Lipschitz domain ensures that $\|\chi_\Omega\|_{B^s_{1,1}(\R^n)}<\infty$.   
\end{remark}

\section{Regularity of minimizers of $\J_\Omega^s$}\label{sec:Regularity of Minimizers}

Throughout this section, we assume that $(X,d,\mu)$ is a complete, connected metric measure space, with $\mu$ a doubling Borel regular measure.  We let $0<s<1$ and assume that $\Omega\subset X$ is a bounded domain. As before, we may assume, without loss of generality, that $X\setminus\Omega\ne\varnothing$.  Otherwise, constant functions are the only minimizers of $\J_\Omega^s$, and the results in this section hold trivially.  We prove uniform density estimates and porosity of minimizers of $\J_\Omega^s$, analogous to the results obtained in the Euclidean setting in \cite[Theorem~4.1,Corollary~4.3]{CRS}.  In the metric setting, similar regularity results were obtained in \cite[Theorems~4.2,5.2]{KKLS} for minimizers of the (local) perimeter functional when the space is doubling and supports a $(1,1)$-Poincar\'e inequality.  We emphasize here that the energy functional $\J_\Omega^s$ is nonlocal, unlike in \cite{KKLS}. 

\subsection{Uniform density estimates}

In \cite{KKLS}, the authors obtained uniform density estimates for the (local) perimeter minimizers by means of the $(1,1)$-Poincar\'e inequality and the De Giorgi method.  Since we are concerned with the nonlocal case, we do not need to assume that $(X,d,\mu)$ supports a $(1,1)$-Poincar\'e inequality.  Instead, we will make use of the fractional Poincar\'e inequality given by Theorem~\ref{thm:FractionalPoincare}, which does not impose the geometric restrictions that the $(1,1)$-Poincar\'e inequality imposes on the metric measure space. In the statement of Theorem~\ref{thm:FractionalPoincare}, it is assumed that $(X,d,\mu)$ is reverse doubling; since we assume that $X$ is connected and $\mu$ is doubling, this condition is satisfied \cite[Corollary~3.8]{BB}.

\begin{proof}[Proof of Theorem~\ref{thm:Uniform Density}]
     
 Let $E\subset X$ be a minimizer of $\J_\Omega^s$. Modifying $E$ by a set of measure zero if necessary, we may assume by Lemma~\ref{lem:Set Representative} that $\mu(B(x,r)\cap E)>0$ and $\mu(B(x,r)\setminus E)>0$ for each $x\in\partial E$ and $r>0$.  Note that such a modification on a set of measure zero does not affect the property of being a minimizer.  Let $x_0\in\partial E\cap\Omega$ and $R_0>0$ be such that $B(x_0,2R_0)\subset\Omega$. We prove the first inequality of \eqref{eq:Uniform Density}, as the second inequality follows from the first and the fact that the complement of a minimizer of $\J_\Omega^s$ is also a minimizer. This is due to the fact that $\JOm^s(E)=\JOm^s(X\setminus E)$, by symmetry of the kernel $K_s$. 
 
 Let $z\in\Omega$ and $R>0$ be such that $B(z,4R)\subset\Omega.$ 
 For each $0<r<R$, let 
 \[
 u=\chi_{B(z,r)\cap E}.
 \]
 Since $X$ is connected and $X\setminus\Omega\ne\varnothing$, there exists $y\in\partial B(z,3r/2)$, and so $B(y,r/2)\subset B(z,2r)$ and $B(y,r/2)\cap B(z,r)=\varnothing$.  Since $\mu$ is doubling, it then follows that 
    \begin{align}\label{eq:Connectedness Application}
    \frac{\mu(B(z,2r)\cap\{|u|>0\})}{\mu(B(z,2r))}\le\frac{\mu(B(z,r))}{\mu(B(z,2r))}\le\frac{\mu(B(z,r))}{\mu(B(z,r))+\mu(B(y,r/2))}\le 1/(1+C_\mu^{-3})<1.
    \end{align}
    By the doubling property of $\mu$ and by applying Lemma~\ref{lem:FracMazya} along with \eqref{eq:LsKernel}, we obtain
    \begin{align*}
        \left(\frac{\mu(B(z,r)\cap E)}{\mu(B(z,r))}\right)^{(Q-s)/Q}&\lesssim\left(\fint_{B(z,2r)}|u|^{Q/(Q-s)}d\mu\right)^{(Q-s)/Q}\\
        &\lesssim\frac{r^s}{\mu(B(z,r))}\int_{B(z,4r)}\int_{B(z,4r)}\frac{|u(x)-u(y)|}{d(x,y)^s\mu(B(x,d(x,y))}d\mu(y)d\mu(x)\\
        &\lesssim\frac{r^s}{\mu(B(z,r))}\int_{B(z,r)\cap E}\int_{X\setminus( B(z,r)\cap E)}\frac{d\mu(y)\,d\mu(x)}{d(x,y)^s\mu(B(x,d(x,y))}\\
        &\simeq\frac{r^s}{\mu(B(z,r))} L_s(B(z,r)\cap E,X\setminus (B(z,r)\cap E)).
    \end{align*}
    Setting $A=B(z,r)\cap E$, we have by Lemma~\ref{lem:SupSubSoln} that
    \begin{align*}
    L_s(A,X\setminus A)=L_s(A,X\setminus E)+L_s(A,E\setminus A)\lesssim L_s(A,E\setminus A)\le  L_s(A,X\setminus B(z,r)).
    \end{align*}
     Substituting this into the above inequality along with \eqref{eq:LsKernel} then yields
    \begin{align*}
     \left(\frac{\mu(B(z,r)\cap E)}{\mu(B(z,r))}\right)^{(Q-s)/Q}\lesssim\frac{r^s}{\mu(B(z,r))}\int_{B(z,r)\cap E}\int_{X\setminus B(z,r)}\frac{d\mu(y)\,d\mu(x)}{d(x,y)^s\mu(B(x,d(x,y))}.
    \end{align*}
    Setting $r_x:=r-d(z,x)$ for each $x\in E\cap B(z,r)$, we have by the doubling property of $\mu$ that 
    \begin{align*}
        \int_{X\setminus B(z,r)}\frac{d\mu(y)}{d(x,y)^s\mu(B(x,d(x,y))}&\le\int_{X\setminus B(x,r_x)}\frac{d\mu(y)}{d(x,y)^s\mu(B(x,d(x,y))}\\
        &=\sum_{m=0}^\infty\int_{B(x,2^{m+1}r_x)\setminus B(x,2^mr_x)}\frac{d\mu(y)}{d(x,y)^s\mu(B(x,d(x,y))}\\
        &\lesssim\frac{1}{(r-d(z,x))^s}.
    \end{align*}
   Substituting this into the previous expression, we obtain
    \begin{align*}
        \frac{\mu(B(z,r))}{r^s}\left(\frac{\mu(B(z,r)\cap E)}{\mu(B(z,r))}\right)^{(Q-s)/Q}&\lesssim\int_{B(z,r)\cap E}\frac{1}{(r-d(z,x))^s}d\mu(x).
    \end{align*}

    Integrating both sides of the above expression with respect to $r$, and using Tonelli's theorem gives 
    \begin{align*}
        \int_{R/2}^R\frac{\mu(B(z,r))}{r^s}\left(\frac{\mu(B(z,r)\cap E)}{\mu(B(z,r))}\right)^{(Q-s)/Q}dr&\lesssim  \int_0^R\int_{B(z,r)\cap E}\frac{1}{(r-d(z,x))^s}d\mu(x)dr\\
        &=\int_0^R\int_{B(z,R)}\frac{\chi_E(x)\chi_{B(z,r)}(x)}{(r-d(z,x))^s}d\mu(x)dr\\
        &=\int_{B(z,R)}\chi_E(x)\int_0^R\frac{\chi_{B(z,r)}(x)}{(r-d(z,x))^s}drd\mu(x)\\
        &=\int_{B(z,R)}\chi_E(x)\int_{d(z,x)}^R\frac{1}{(r-d(z,x))^s}drd\mu(x)\\
        &\lesssim R^{1-s}\mu(B(z,R)\cap E).
    \end{align*}
    By the doubling property of $\mu$, we estimate the left-hand side of the above expression from below by
    \begin{align*}
       \int_{R/2}^R\frac{\mu(B(z,r))}{r^s}\left(\frac{\mu(B(z,r)\cap E)}{\mu(B(z,r))}\right)^{(Q-s)/Q}dr
       &\gtrsim R^{1-s}\mu(B(z,R))\left(\frac{\mu(B(z,R/2)\cap E)}{\mu(B(z,R/2))}\right)^{(Q-s)/Q}.
    \end{align*}
    Combining this with the previous expression, we obtain
    \begin{align}\label{eq:Iteration}
       \left(\frac{\mu(B(z,R/2)\cap E)}{\mu(B(z,R/2))}\right)^{(Q-s)/Q}&\le C_0\frac{\mu(B(z,R)\cap E)}{\mu(B(z,R))} 
    \end{align}
    for all $z\in\Omega$ and $R>0$ such that $B(z,4R)\subset\Omega$, where the constant $C_0\ge 1$ depends only on $s$ and the doubling constant of $\mu$.  

    Now consider $x_0\in\partial E\cap\Omega$ and $R_0>0$ fixed at the beginning of the proof.  Suppose that 
    \begin{equation}\label{eq:Density}
    \frac{\mu(B(x_0,R_0)\cap E)}{\mu(B(x_0,R_0)}<\gamma_0:=\frac{1}{2^{Q+1}C_0^{Q/s}C_Q}.
    \end{equation}
    Then, for $z\in B(x_0,R_0/4)$, and $j\in\N$ with $j\ge 2$, set $B_j:=B(z,R_0/2^j)$.  Setting $p:=Q/(Q-s)$, we iterate estimate \eqref{eq:Iteration} to obtain
    \begin{align*}
        \frac{\mu(B_j \cap E)}{\mu(B_j)}&\le\left(C_0\frac{\mu(B_{j-1}\cap E)}{\mu(B_{j-1})}\right)^p\\
        &\le C_0^p\left(C_0\frac{\mu(B_{j-2}\cap E)}{\mu(B_{j-2})}\right)^{p^2}\\
        &\le C_0^{p+p^2+\cdots+p^{j-1}}\left(\frac{\mu(B(z,R_0/2)\cap E)}{\mu(B(z,R_0/2))}\right)^{p^{j-1}}\\
        &\le C_0^{p+p^2+\cdots+p^{j-1}}\left(2^QC_Q\frac{\mu(B(x_0,R_0)\cap E)}{\mu(B(x_0,R_0))}\right)^{p^{j-1}}\le(2^QC_0^{Q/s}C_Q\gamma_0)^{p^{j-1}}.
    \end{align*}
    By our choice of $\gamma_0$, we have that 
    \[    \lim_{j\to\infty}\fint_{B_j}\chi_E\,d\mu=\lim_{j\to\infty}\frac{\mu(B_j\cap E)}{\mu(B_j)}=0.
    \]
    Since $z\in B(x_0,R_0/4)$ is arbitrary, it follows from the Lebesgue differentiation theorem that $\mu(B(x_0,R_0/4)\cap E)=0$.  However, this is a contradiction, as $x_0\in\partial E$, and we have assumed that $\mu(B(x_0,R)\cap E)>0$ for all $R>0$.  Hence,
    \[
    \frac{\mu(B(x_0,R_0)\cap E)}{\mu(B(x_0,R_0))}\ge\gamma_0.\qedhere
    \]
\end{proof}

\begin{remark}\label{rem:Connectedness}
    One can obtain the conclusion of Theorem~\ref{thm:Uniform Density} by replacing the assumption that $X$ is connected with the weaker assumption that $X$ is uniformly perfect.  A uniformly perfect space equipped with a doubling measure satisfies the reverse doubling condition, see \cite{H} for example, and so Theorem~\ref{thm:FractionalPoincare} and Lemma~\ref{lem:FracMazya} are still applicable under this assumption. Uniform perfectness also yields an estimate similar to \eqref{eq:Connectedness Application}.  Under this assumption, however, the constant $\gamma_0$ will depend additionally upon the uniform perfectness constant.

    Moreover, since we are assuming that $\Omega$ is a domain, and hence connected, we can remove the assumption of uniform perfectness of $X$ entirely, provided we consider $x_0\in\partial E\cap\Omega$ and $R_0>0$ such that $B(x_0,2R_0)\subset\Omega$ and $\Omega\setminus B(x_0,2R_0)\ne\varnothing$.  In this case, \eqref{eq:Connectedness Application} follows by the same argument in the proof of Theorem~\ref{thm:Uniform Density}.   Furthermore, an examination of the proof of Theorem~\ref{thm:FractionalPoincare} in \cite{DLV} shows that if there exists a constant $0<c<1$ so that 
\[
\mu(B(x,r/2))\le c\mu(B(x,r))
\]
for all $0<r\le R$, then the desired fractional Poincar\'e inequality holds on $B(x,R)$.  In our case, we want to apply the fractional Poincar\'e inequality on balls $B(z,2r)\subset B(x_0,R_0/2)$. 
 If $B(x_0,2R_0)\subset\Omega$ and $\Omega\setminus B(x_0,2R_0)\ne\varnothing$, then $B(z,2r)\subset\Omega$ and $\Omega\setminus B(z,2r)\ne\varnothing$.  In this case, connectedness of $\Omega$ and the argument used to obtain \eqref{eq:Connectedness Application}, gives
 \[
 \frac{\mu(B(z,\rho/2))}{\mu(B(z,\rho))}\le1/(1+C_\mu^{-3})
 \]
 for all $0<\rho<2r$.  Hence, we have the fractional Poincar\'e inequality on the desired ball, and can apply Lemma~\ref{lem:FracMazya} as we did in the above proof.  For simplicity, and since we assume connectedness, via the LLC-1 condition, in Theorem~\ref{thm:Porosity} below, we have kept the assumption of connectedness in the statement of Theorem~\ref{thm:Uniform Density}.
\end{remark}

\subsection{Porosity}
By using Theorem~\ref{thm:Uniform Density}, we now prove the porosity result Theorem~\ref{thm:Porosity}.

\begin{proof}[Proof of Theorem~\ref{thm:Porosity}]
Let $E\subset X$ be a minimizer of $\J_\Omega^s$, possibly modified on a set of measure zero so that the conclusion to Theorem~\ref{thm:Uniform Density} holds.  Let $x_0\in\partial E\cap \Omega$ and $R_0>0$ be such that $B(x_0,2R_0)\subset\Omega$.  We prove the first containment of \eqref{eq:Porosity}, as the second follows from the first and the fact that the complement of a minimizer of $\J_\Omega^s$ is a minimizer.  

For each $R_0/2\le r\le R_0$, let 
$$\wtil\rho_r=\inf\{\rho>0: B(y,\rho)\setminus E\ne\varnothing\text{ for all }y\in B(x_0,r)\cap E\}.$$
Let $C_L\ge 1$ denote the constant from the LLC-1 condition, see Definition~\ref{def:LLC}. If $\wtil\rho_r>r/(20C_L)$, then there exists $y\in B(x_0,R_0)\cap E$ such that $B(y,R_0/(40C_L))\subset E\cap\Omega$, and so we can take $C=40C_L$.  Thus, without loss of generality, we assume that $\wtil\rho_r\le r/(20C_L)$ for each $R_0/2\le r\le R_0$.  

Let $0<\eps<R_0/(12C_L)$, and let $\rho_r:=2C_L(\wtil\rho_r+\eps)$.  By the 5-covering lemma, we can cover $B(x_0,r)\cap E$ by balls $\{B(y_i,5\rho_r)\}_i$, with $y_i\in B(x_0,r)\cap E$, such that the collection $\{B(y_i,\rho_r)\}_i$ is pairwise disjoint. By Theorem~\ref{thm:Uniform Density} and doubling, we have that 
\begin{align}\label{**}
\gamma_0\mu(B(x_0,r))\le\mu(B(x_0,r)\cap E)\le\sum_i\mu(B(y_i,5\rho_r))\lesssim\sum_i\mu(B(y_i,\rho_r)).
\end{align}

By the definition of $\wtil\rho_r$, there exists $z\in B(y_i,\wtil\rho_r+\eps)\setminus E$, and by the LLC-1 condition, we can join $y_i$ and $z$ by a connected set inside the $B(y_i,C_L(\wtil\rho_r+\eps))$.  Therefore, there exists $z'\in\partial E\cap B(y_i,C_L(\wtil\rho_r+\eps))=\partial E\cap B(y_i,\rho_r/2)$.  By our choices, we have that $B(z',\rho_r)\subset\Omega$, and so by Theorem~\ref{thm:Uniform Density} and the doubling property of $\mu$, 
\begin{align}\label{eq:Density Application 1}
    \frac{\mu(B(y_i,\rho_r)\cap E)}{\mu(B(y_i,\rho_r))}\gtrsim\frac{\mu(B(z',\rho_r/2)\cap E)}{\mu(B(z',\rho_r/2))}\ge\gamma_0.
\end{align}
Similarly, Theorem~\ref{thm:Uniform Density} gives us
\begin{equation}\label{eq:Density Application 2}
\frac{\mu(B(y_i,\rho_r)\setminus E)}{\mu(B(y_i,\rho_r))}\gtrsim\gamma_0.
\end{equation}
By \eqref{eq:LsKernel}, the doubling property of $\mu$, as well as \eqref{eq:Density Application 2} and \eqref{eq:Density Application 1}, it follows that 
\begin{align*}
L_s(B(y_i,\rho_r)\cap E,\,B(y_i,\rho_r)\setminus E)&\simeq\int_{B(y_i,\rho_r)\cap E}\int_{B(y_i,\rho_r)\setminus E}\frac{1}{d(x,y)^s\mu(B(x,d(x,y)))}d\mu(y)d\mu(x)\\	
	&\gtrsim \frac{1}{\rho_r^s}\int_{B(y_i,\rho_r)\cap E}\frac{\mu(B(y_i,\rho_r)\setminus E)}{\mu(B(x,2\rho_r))}d\mu(x)\\
	&\gtrsim \frac{\gamma_0}{\rho_r^s}\int_{B(y_i,\rho_r)\cap E}\frac{\mu(B(y_i,\rho_r))}{\mu(B(x,2\rho_r))}d\mu(x)\\
	&\gtrsim \gamma_0\frac{\mu(B(y_i,\rho_r)\cap E)}{\rho_r^s}\gtrsim \gamma_0^2\frac{\mu(B(y_i,\rho_r))}{\rho_r^s}.
\end{align*}
From \eqref{**}, it then follows that 
\begin{align}\label{*}
\sum_i L_s(B(y_i,\rho_r)\cap E,B(y_i,\rho_r)\setminus E)&\gtrsim\frac{\gamma_0^2}{\rho_r^s}\sum_i\mu(B(y_i,\rho_r))\gtrsim\gamma_0^3\frac{\mu(B(x_0,r))}{\rho_r^s}.
\end{align}

By Lemma~\ref{lem:SupSubSoln}, it follows that 
\[
L_s(B(x_0,2r)\cap E,X\setminus E)\lesssim L_s(B(x_0,2r)\cap E, E\setminus B(x_0,2r)),
\]
and so by disjointness of $\{B(y_i,\rho_r)\}_i$, we obtain 
\begin{align}\label{***}
\sum_i  L_s(B(y_i,\rho_r)\cap E,B(y_i,\rho_r)&\setminus E)\le\sum_i  L_s(B(y_i,\rho_r)\cap E,X\setminus E)\nonumber\\
&\le L_s(B(x_0,2r)\cap E,X\setminus E)\nonumber\\
	&\lesssim L_s(B(x_0,2r)\cap E, E\setminus B(x_0,2r))\le L_s(B(x_0,2r),X\setminus B(x_0,2r)).
\end{align}
For each $x\in B(x_0,2r),$ let $r_x:=2r-d(x_0,x)$.  By the doubling property of $\mu$, we have that 
\begin{align*}
\int_{X\setminus B(x_0,2r)}\frac{d\mu(y)}{d(x,y)^s\mu(B(x,d(x,y)))}&\le\int_{X\setminus B(x,r_x)}\frac{d\mu(y)}{d(x,y)^s\mu(B(x,d(x,y)))}\\
    &=\sum_{m=0}^\infty\int_{B(x,2^{m+1}r_x)\setminus B(x,2^mr_x)}\frac{d\mu(y)}{d(x,y)^s\mu(B(x,d(x,y)))}\\
    &\lesssim\frac{1}{(2r-d(x_0,x))^s},
\end{align*}
from which we obtain, by \eqref{eq:LsKernel},
\begin{align*}
L_s(B(x_0,2r), X\setminus B(x_0,2r))&\simeq\int_{B(x_0,2r)}\int_{X\setminus B(x_0,2r)}\frac{d\mu(y)d\mu(x)}{d(x,y)^s\mu(B(x,d(x,y)))}\\
	&\lesssim\int_{B(x_0,2r)}\frac{1}{(2r-d(x,x_0))^s}d\mu(x).
\end{align*}
Combining this inequality with \eqref{*} and \eqref{***}, and noting that $\rho_r\le\rho_{R_0}$ by definition, we obtain by the doubling property of $\mu$,
$$\gamma_0^3\frac{\mu(B(x_0,R_0))}{\rho_{R_0}^s}\lesssim \int_{B(x_0,2r)}\frac{1}{(2r-d(x,x_0))^s}d\mu(x)$$
for each $R_0/2\le r\le R_0$. 
 Integrating both sides with respect to $r$, and using Tonelli's theorem and the doubling property of $\mu$, we have that
\begin{align*}
R_0\gamma_0^3\frac{\mu(B(x_0,R_0))}{\rho_{R_0}^s}&\lesssim \int_0^{R_0}\int_{B(x_0,2r)}\frac{1}{(2r-d(x,x_0))^s}d\mu(x)dr\\
    &=\int_0^{R_0}\int_{B(x_0,2R_0)}\frac{\chi_{B(x_0,2r)}(x)}{(2r-d(x,x_0))^s}d\mu(x)dr\\
    &=\int_{B(x_0,2R_0)}\int_{d(x,x_0)/2}^{R_0}\frac{1}{(2r-d(x,x_0))^s}drd\mu(x)\lesssim R_0^{1-s}\mu(B(x_0,R_0)).
\end{align*}
Therefore, there exists a constant $C_0$ depending only on $\gamma_0$, $s$, and the doubling constant of $\mu$ such that $\rho_{R_0}\ge R_0/C_0$, and so  $\wtil\rho_{R_0}\ge R_0/(2C_LC_0)$, since $0<\eps<R_0/(12C_L)$ is arbitrary.  Hence, we obtain the desired constant by setting $C:=4C_LC_0$.
\end{proof}

The conclusion of Theorem~\ref{thm:Porosity} immediately tells us that if $E$ is a minimizer of $\J_\Omega^s$, modified on a set of measure zero if necessary, then $\Ha^{-s}(\partial E\cap\Omega)<\infty$.  Indeed, for $\delta>0$, consider a countable cover $\{B(x_i,r_i)\}_i$ of $\partial E\cap\Omega$ such that $x_i\in\partial E$, $r_i<\delta$ and $B(x_i,2r_i)\subset\Omega$.  The 5-covering lemma then gives us a pairwise disjoint subcollection $\{B(x_i,2r_i)\}_{i\in I\subset\N}$ such that $\partial E\cap\Omega\subset\bigcup_{i\in I}B(x_i,10r_i)$.  For each $i\in I$, there exists $y_i,z_i\in B(x_i,r_i)$ such that $B^1_i:=B(y_i,r_i/C)\subset E\cap\Omega$ and $B^2_i:=B(z_i,r_i/C)\subset\Omega\setminus E$, by Theorem~\ref{thm:Porosity}.  By the doubling property of $\mu$, it then follows that 
\begin{align*}
    \infty>\J_\Omega^s(E)\ge L_s(E\cap\Omega,\Omega\cap E)&\ge\sum_{i\in I}L_s(B(x_i,2r_i)\cap E,B(x_i,2r_i)\setminus E)\\
    &\ge\sum_{i\in I}L_s(B^1_i,B^2_i)\gtrsim\sum_{i\in I}\frac{\mu(B(x_i,r_i))}{r_i^s}\ge\Ha^{-s}_\delta(\partial E\cap\Omega).
\end{align*}
However, the conclusion of Theorem~\ref{thm:Porosity} together with the doubling property of $\mu$ implies that $\partial E\cap\Omega\subset\partial^*E$.  Therefore, Theorem~\ref{thm:CodimHausZero} gives us the following stronger corollary:

\begin{cor}\label{cor:Minimizer Haus}
Let $(X,d,\mu)$ be a compact metric measure space satisfying the LLC-1 condition, with $\mu$ a doubling measure. Let $\Omega\subset X$ be a bounded domain, let $0<s<1$, and let $E$ be a minimizer of $\J_\Omega^s$ such that $\chi_E\in B^s_{1,1}(X)$.  Then, after modifying $E$ on a set of measure zero if necessary, we have that $\Ha^{-s}(\partial E\cap\Omega)=0$. 
\end{cor}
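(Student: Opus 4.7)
The plan is to prove the set-theoretic inclusion $\partial E\cap\Omega\subset\partial^*E$ using Theorem~\ref{thm:Porosity} together with the doubling property of $\mu$, and then apply Theorem~\ref{thm:CodimHausZero} to obtain the vanishing of $\Ha^{-s}$.

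First, I would invoke Theorem~\ref{thm:Porosity} to pass to a representative of $E$, modified on a set of measure zero, for which there exists $C\ge 1$ so that for every $x_0\in\partial E\cap\Omega$ and every $R_0>0$ with $B(x_0,2R_0)\subset\Omega$ there exist points $y,z\in B(x_0,R_0)$ with $B(y,R_0/C)\subset E\cap\Omega$ and $B(z,R_0/C)\subset\Omega\setminus E$. Since the Besov energy and the measure theoretic boundary are both insensitive to changes on sets of measure zero, the hypothesis $\chi_E\in B^s_{1,1}(X)$ persists and $\partial^*E$ is unchanged; I would continue to denote the modified set by $E$.

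Second, I would convert the porosity into a two-sided lower density bound at each $x_0\in\partial E\cap\Omega$. Since $\Omega$ is open, arbitrarily small $R_0>0$ satisfy $B(x_0,2R_0)\subset\Omega$. For such $R_0$, the ball $B(y,R_0/C)$ lies inside $B(x_0,2R_0)$, so the doubling property yields
\[
\frac{\mu(B(x_0,2R_0)\cap E)}{\mu(B(x_0,2R_0))}\ge\frac{\mu(B(y,R_0/C))}{\mu(B(x_0,2R_0))}\gtrsim 1,
\]
with the comparison constant depending only on $C$ and $C_\mu$. The analogous bound for $X\setminus E$ follows from the corresponding argument at $z$. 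Letting $R_0\to 0^+$ (and relabeling $r=2R_0$) shows both upper densities of $E$ and its complement at $x_0$ are bounded below by a positive constant, so $x_0\in\partial^*E$. This establishes $\partial E\cap\Omega\subset\partial^*E$.

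Finally, because $(X,d,\mu)$ is compact and doubling and $\chi_E\in B^s_{1,1}(X)$, Theorem~\ref{thm:CodimHausZero} gives $\Ha^{-s}(\partial^*E)=0$, and monotonicity of $\Ha^{-s}$ delivers $\Ha^{-s}(\partial E\cap\Omega)=0$. I do not foresee a genuine obstacle: the argument is essentially a direct splicing of Theorems~\ref{thm:Porosity} and \ref{thm:CodimHausZero}, and the only care required is verifying that the measure-zero modification afforded by Theorem~\ref{thm:Porosity} is compatible with the later invocation of Theorem~\ref{thm:CodimHausZero}, which it is because both the Besov hypothesis and the measure theoretic boundary depend only on the $\mu$-equivalence class of $E$.
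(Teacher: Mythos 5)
Your argument is correct and matches the paper's: the paper likewise deduces $\partial E\cap\Omega\subset\partial^*E$ from Theorem~\ref{thm:Porosity} plus doubling (each porosity ball gives a uniform lower bound on the densities of $E$ and $X\setminus E$ at points of $\partial E\cap\Omega$) and then applies Theorem~\ref{thm:CodimHausZero}. Your check that the measure-zero modification leaves both the Besov hypothesis and $\partial^*E$ unchanged is exactly the compatibility point needed, so nothing is missing.
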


\end{document}